\documentclass[notitlepage, 11pt]{scrartcl}
\usepackage[shortlabels]{enumitem}
\usepackage{amsmath}
\usepackage{amssymb}
\usepackage{amsthm}
\usepackage{abstract}
\usepackage{xcolor}
\usepackage{comment}
\usepackage{mathtools}
\usepackage{graphicx}
\usepackage{caption}
\usepackage{subcaption}
\usepackage{float}
\graphicspath{ {images_for_expansion/} }
\usepackage{hyperref}
\hypersetup{
    colorlinks=true,
    linkcolor=blue,
    filecolor=magenta,      
    urlcolor=cyan
    }
\makeatletter
\newcommand*{\barfix}[2][.175ex]{%
  \mathpalette{\@barfix{#1}}{#2}%
}
\newcommand*{\@barfix}[3]{%
  \vbox{%
    \kern#1\relax
    \hbox{$#2#3\m@th$}%
  }%
}
\makeatother

\newtheorem{theorem}{Theorem}
\newtheorem{thm}{Theorem}[section]
\newtheorem{corollary}[thm]{Corollary}
\newtheorem{lemma}[thm]{Lemma}

\newtheorem{remark}[thm]{Remark}

\newtheorem{question}[thm]{Question}
\newcommand{\footremember}[2]{%
    \footnote{#2}
    \newcounter{#1}
    \setcounter{#1}{\value{footnote}}%
}

\usepackage[margin=1in]{geometry}

\title{\vspace{-1.5cm}Isoperimetric Inequalities and Supercritical Percolation on High-dimensional Graphs} 
\author{%
Sahar Diskin \footremember{alley}{\scriptsize{School of Mathematical Sciences, Tel Aviv University, Tel Aviv 6997801, Israel. Email: sahardiskin@mail.tau.ac.il.}}%
\and Joshua Erde \footremember{trailer}{\scriptsize{Institute of Discrete Mathematics, Graz University of Technology, Steyrergasse 30, 8010 Graz, Austria. Email: erde@math.tugraz.at.}}%
\and Mihyun Kang \footremember{alley2}{\scriptsize{Institute of Discrete Mathematics, Graz University of Technology, Steyrergasse 30, 8010 Graz, Austria. Email: kang@math.tugraz.at.}}%
\and Michael Krivelevich \footremember{trailer2}{\scriptsize{School of Mathematical Sciences, Tel Aviv University, Tel Aviv 6997801, Israel. Email: krivelev@tauex.tau.ac.il.}}%
}
\begin{document}
\maketitle
\vspace{-2em}
\begin{abstract}
It is known that many different types of finite random subgraph models undergo quantitatively similar phase transitions around their percolation thresholds, and the proofs of these results rely on isoperimetric properties of the underlying host graph. Recently, the authors showed that such a phase transition occurs in a large class of regular high-dimensional product graphs, generalising a classic result for the hypercube.

In this paper we give new isoperimetric inequalities for such regular high-dimensional product graphs, which generalise the well-known isoperimetric inequality of Harper for the hypercube, and are asymptotically sharp for a wide range of set sizes. We then use these isoperimetric properties to investigate the structure of the giant component $L_1$ in supercritical percolation on these product graphs, that is, when $p=\frac{1+\epsilon}{d}$, where $d$ is the degree of the product graph and $\epsilon>0$ is a small enough constant.

We show that typically $L_1$ has edge-expansion $\Omega\left(\frac{1}{d\ln d}\right)$. Furthermore, we show that $L_1$ likely contains a linear-sized subgraph with vertex-expansion $\Omega\left(\frac{1}{d\ln d}\right)$. These results are best possible up to the logarithmic factor in $d$. 

Using these likely expansion properties, we determine, up to small polylogarithmic factors in $d$, the likely diameter of $L_1$ as well as the typical mixing time of a lazy random walk on $L_1$. Furthermore, we show the likely existence of a cycle of length $\Omega\left(\frac{n}{d\ln d}\right)$. These results not only generalise, but also improve substantially upon the known bounds in the case of the hypercube, where in particular the likely diameter and typical mixing time of $L_1$ were previously only known to be polynomial in $d$.
\end{abstract}

\section{Introduction}
\subsection{Background and motivation} 
In this paper we investigate the typical structure of the largest component after supercritical percolation in a certain class of high-dimensional graphs. Of particular interest, both in their own right, but also as a tool to study other structural properties, are the \emph{isoperimetric properties} of the largest component, which have proven to be key to understanding the large-scale structure of the giant component in many percolation models. Unsurprisingly, in order to understand the likely isoperimetric properties of the giant component, it is first essential to study the isoperimetric properties of the host graph.

Very generally, for any space which is endowed with a notion of volume and boundary, the \emph{isoperimetric problem} is to determine which sets of fixed volume have the smallest boundary. In the case of graphs, a natural notion of boundary to consider is the \textit{edge-boundary}. Given a graph $G=(V,E)$ and a subset of the vertices $S\subseteq V(G)$, we write $\partial(S)$ for the edge-boundary of $S$, that is, the set of edges with one endpoint in $S$ and one endpoint in $V(G)\setminus S$. The isoperimetric problem is then equivalent to determining, for each $k \in \mathbb{N}$, the parameter
\[
i_k(G) \coloneqq \min_{S\subseteq V(G), |S|=k}\left\{\frac{|\partial(S)|}{k}\right\},
\]
and characterising the sets which achieve this minimum.
Of particular interest is the \emph{edge-isoperimetric constant} of $G$, given by
\begin{align*}
    i(G)\coloneqq \min_{k\le |V(G)|/2} \{i_k(G)\}.
\end{align*}
This is also sometimes called the \emph{Cheeger constant}, as it can be viewed as a discrete analogue of the Cheeger isoperimetric constant of a compact Riemannian manifold \cite{C70}. It turns out that the Cheeger constant is a fundamental graph parameter, and can be used to demonstrate deep links between the combinatorial, geometric, spectral and stochastic properties of graphs. For this reason \textit{expander graphs}, roughly speaking graphs whose Cheeger constant is bounded from below by an absolute constant, have turned out to be very important in diverse areas of discrete mathematics and computer science. We refer the reader to \cite{HLW06} for a comprehensive survey on expander graphs and their application.

Whilst in general it is NP-hard to determine even the edge-isoperimetric constant of an arbitrary graph \cite{GJS74}, much is known about the isoperimetric properties of particularly well-structured graph classes. In particular, a classical result of Harper solves the isoperimetric problem on the $d$-\textit{dimensional (binary) hypercube} $Q^d$, whose vertex set is $\{0,1\}^d$, and in which two vertices are adjacent if and only if their Hamming distance is one. Harper's result implies the following isoperimetric inequality:
\begin{thm}[\cite{H64}, see also \cite{L64, B67, H76}]\label{th: Harper}
Let $d \in \mathbb{N}$. For every $k\in\left[2^d\right]$
\[
i_k\left(Q^d\right) \geq d-\log_2 k.
\]
Furthermore, the only sets which achieve equality in the above estimate are subcubes.
\end{thm}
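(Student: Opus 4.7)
My plan is to derive the bound via an information-theoretic argument based on Han's inequality, which has the advantage of yielding the equality characterisation as part of the same proof. Let $S \subseteq V(Q^d)$ with $|S| = k$, and let $X = (X_1, \ldots, X_d)$ be a uniformly random vertex of $S$, so that $H(X) = \log_2 k$. Decomposing the boundary by coordinate direction, write $|\partial(S)| = \sum_{i=1}^d N_i(S)$, where $N_i(S) := |\{x \in S : x^{\oplus i} \notin S\}|$ is the number of boundary edges along direction $i$.

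The heart of the proof is the identification of $N_i(S)$ with a conditional entropy. Conditional on $X_{-i} = y$, the fibre $\{x_i \in \{0,1\} : (x_i, y) \in S\}$ has size $1$ or $2$; under the uniform distribution on $S$, the two vertices in a size-$2$ fibre are equiprobable, while a size-$1$ fibre contributes zero entropy. A direct computation then yields the identity $H(X_i \mid X_{-i}) = 1 - N_i(S)/|S|$, so that
\[
|\partial(S)| \;=\; |S|\Bigl(d - \sum_{i=1}^d H(X_i \mid X_{-i})\Bigr).
\]
By the chain rule $H(X) = \sum_i H(X_i \mid X_1, \ldots, X_{i-1})$ and the monotonicity of entropy under conditioning, Han's inequality $\sum_i H(X_i \mid X_{-i}) \leq H(X)$ follows immediately. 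Substituting gives the desired bound $|\partial(S)| \geq k(d - \log_2 k)$.

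The main obstacle I foresee is the equality characterisation. Equality in the bound above forces, for every $i$, the tight inequality $H(X_i \mid X_{-i}) = H(X_i \mid X_1, \ldots, X_{i-1})$, which means $X_i$ is conditionally independent of $(X_{i+1}, \ldots, X_d)$ given $(X_1, \ldots, X_{i-1})$. The plan is to chain these conditional independences into a statement of joint (mutual) independence of $X_1, \ldots, X_d$ by induction on $d$: the base case $i=1$ gives $X_1 \perp (X_2, \ldots, X_d)$, which reduces the remaining conditional independences to the analogous situation on $(X_2, \ldots, X_d)$. Once joint independence is established, uniformity of $X$ on $S$ forces the support to factorise as $S = A_1 \times \cdots \times A_d$ with each $A_i \subseteq \{0,1\}$ --- that is, $S$ is a subcube, as required.
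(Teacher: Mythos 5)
Theorem~\ref{th: Harper} is cited in the paper without proof, but your argument for the inequality recovers exactly the paper's proof of Theorem~\ref{th: iso 2} in the case $C=2$: the paper applies Shearer's inequality to the covering $\{[t]\setminus\{i\}\}_{i}$, which in this form is precisely Han's inequality $\sum_i H(X_i\mid X_{-i}) \leq H(X)$ that you use. What you gain by specialising to the binary case is that the key identity $H(X_i\mid X_{-i}) = 1 - N_i(S)/|S|$ is exact, whereas for general bounded base graphs the paper can only obtain a one-sided bound on $H(X_i\mid X_{-i})$ (see the derivation leading to~\eqref{e:conditional}). It is exactly this exactness that makes your equality analysis viable, since equality in Harper becomes literally equality in Han. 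That analysis is correct: Han's inequality arises from the chain rule $H(X)=\sum_i H(X_i\mid X_1,\dots,X_{i-1})$ together with the term-by-term inequalities $H(X_i\mid X_{-i})\leq H(X_i\mid X_1,\dots,X_{i-1})$, so equality forces all the conditional independences you state; these chain inductively to joint independence of $X_1,\dots,X_d$, and a uniform distribution with jointly independent coordinates has support $\prod_i\mathrm{supp}(X_i)$, i.e.\ a subcube. The equality characterisation does not appear in the paper at all, so your proposal goes beyond what the paper establishes. For context, the paper's other Harper-type bound, Theorem~\ref{th: iso 1}, is proved by a genuinely different route --- an induction on the dimension of the product via a density estimate and the elementary entropy bound of Corollary~\ref{c: iso 1} --- so you might find it instructive to compare the two proof strategies.
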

The isoperimetric problem has also been solved, at least asymptotically, in many other classes of lattice-like graphs, such as grids \cite{BI91,AB95}, Cartesian powers of graphs \cite{C02,BE03}, and Abelian Cayley graphs \cite{L15,BE18, BEKR22}. For further background, we refer the reader to the surveys \cite{B94, B99, H04} on discrete isoperimetric problems.

On the other hand, the isoperimetric properties of particularly `unstructured' graphs, that is, graphs without any clear geometric structure, have also been well-studied. It is known that Erd\H{o}s-R\'enyi (binomial) random graphs \cite{J81,FK16} and random $d$-regular graphs \cite{B88} have typically good expansion properties, and one can view the well-known Expander Mixing Lemma, due to Alon and Chung \cite{AC88}, as a bound on the edge-isoperimetric constant of pseudo-random $(n,d,\lambda)$-graphs (see also \cite{AM85}). Furthermore, the isoperimetric properties of such graphs have been a key tool in the study of their structural properties. 

In this paper, we consider a mixture of these two paradigms. We study properties of \textit{random subgraphs} of graphs coming from a family of graphs which are quite structured --- arising from \textit{high-dimensional products} of bounded graphs. As in other percolation models, it turns out that the isoperimetric properties of these random subgraphs are key to understanding their large-scale structure, and that in order to understand the likely isoperimetric properties in the percolated subgraphs, it is useful first to study the isoperimetric problem in the underlying product graphs.

Given a sequence of graphs $G^{(1)},\ldots, G^{(t)}$, the Cartesian product of $G^{(1)},\ldots, G^{(t)}$, denoted by $G=G^{(1)}\square \cdots \square G^{(t)}$ or $G=\square_{j=1}^{t}G^{(j)}$, is the graph with the vertex set
\begin{align*}
    V(G)=\left\{v=(v_1,v_2,\ldots,v_t) \colon v_j\in V(G^{(j)}) \text{ for all } j \in [t]\right\},
\end{align*}
and the edge set
\begin{align*}
   E(G)=\left\{uv \colon \begin{array}{l} \text{there is some } j\in [t] \text{ such that }  u_jv_j\in E\left(G^{(j)}\right)\\  \text{ and } u_m=v_m  
    \text{ for all } m \neq j  \end{array}\right\}.
\end{align*}
We call $G^{(j)}$ the \textit{base graphs of} $G$. Note that if each $G^{(j)}$ is $d_j$-regular, then $G$ is $d$-regular with $d \coloneqq \sum_{j=1}^t d_j$. Many well-studied families of graphs arise in this manner. For example, the $t$-dimensional hypercube $Q^t$ is the $t$-fold Cartesian product of a single edge. Other examples include tori, grids, Hamming graphs, and many examples of Cayley graphs of groups arising from direct products.

We will be interested in properties of random subgraphs of high-dimensional product graphs, that is, we consider \textit{bond percolation} on these graphs. Percolation theory was initiated in 1957 by Broadbent and Hammersley \cite{BH57} in order to model the flow of fluid through a medium with randomly blocked channels, and has become a major area of research. In (bond) percolation, given a \textit{host graph} $G$ and a probability $p\in[0,1]$, we form the random subgraph $G_p$ by including every edge of $G$ independently with probability $p$. Percolation has been studied extensively on various geometric `lattice-like' classes of graphs, and in particular on many of the families of graphs which arise naturally as high-dimensional product graphs such as high-dimensional hypercubes \cite{AKS81,BKL92}, tori \cite{HH07,HH11}, or Hamming graphs \cite{BCVSS05a,BCVSS05b} (see \cite[Chapter 13]{HH17} for a survey on many important results in these models). We refer the reader to the monographs \cite{K82, G99, BR06} for a more comprehensive background on percolation theory.

There is an intrinsic connection between the phase transition in percolated graphs, and the isoperimetric properties of the host graph. This connection can be seen, albeit implicitly, already in the classical phase transition result of Erd\H{o}s and R\'enyi \cite{ER60}. In the case of percolated expander graphs, this connection is explicit in the work of Alon, Benjamini and Stacey \cite{ABS04}, and in the case of percolated pseudo-random graphs in the work of Frieze, Krivelevich and Martin \cite{FKM04}. Ajtai, Koml\'os, and Szemer\'edi \cite{AKS81} proved that $Q^d_p$ undergoes a phase transition quantitatively similar to the one which occurs in $G(n,p)$, and their work was later extended by Bollob\'as, Kohayakawa, and \L{}uczak \cite{BKL92} --- both of which explicitly rely on the isoperimetric properties of the hypercube. 

Furthermore, above the percolation threshold the connection between the isoperimetric properties of the host graph $G$, the expansion properties of the percolated graph $G_p$, and the combinatorial properties of the resulting giant component in $G_p$ has been made explicit in several works. To mention a few, Fountoulakis and Reed \cite{FR07a, FR08}, and, independently, Benjamini, Kozma and Wormald \cite{BKW14} study the asymptotic mixing time of a random walk on the giant component of $G(n,p)$ using the likely expansion properties of connected sets (and, implicitly, the isoperimetric properties of the complete graph); Riordan and Wormald \cite{RW10} utilise likely expansion properties in the giant component of $G(n,p)$ in order to bound its typical diameter; and Erde, Kang and Krivelevich \cite{EKK22} use the isoperimetric properties of $Q^d$ to show typical expansion properties of the giant component of $Q^d_p$, and derive from them the current best known bounds on its likely circumference (that is, the length of a longest cycle), typical diameter and asymptotic mixing time.

Recently, generalising the results of \cite{AKS81, BKL92} on $Q^d$, the authors showed that any high-dimensional product graph, whose base graphs are bounded in order and regular, undergoes a phase transition in terms of its component structure around $p=\frac{1}{d}$, where $d$ is the degree of the product graph, and that this phase transition is quantitatively similar to that of $G(n,p)$. Given a constant $\epsilon>0$, let us define $y\coloneqq y(\epsilon)$ to be the unique solution in $(0,1)$ of the equation
\begin{align}\label{survival prob}
    y=1-\exp\left(-(1+\epsilon)y\right).
\end{align}
\begin{thm}[Theorem 2 in \cite{DEKK22}]\label{th: dekk22}
Let $C>1$ be a constant and let $\epsilon>0$ be sufficiently small. For all $j\in [t]$, let $G^{(j)}$ be a connected regular graph of degree $d_j$
such that $1<\big|V\left(G^{(j)}\right)\big|\le C$. Let $G=\square_{j=1}^{t}G^{(j)}$, let $n\coloneqq |V(G)|$ and let $p=\frac{1+\epsilon}{d}$, where $d\coloneqq d(G) =\sum_{j=1}^{t}d_j$ is the degree of $G$. Then, \textbf{whp}\footnote{With high probability, that is, with probability tending to $1$ as $t$ tends to infinity.},
there exists a unique component of order $\left(1+o(1)\right)yn$ in $G_p$, where $y=y(\epsilon)$ is defined as in (\ref{survival prob}). Furthermore, \textbf{whp}, all the remaining components of $G_p$ are of order $O_{\epsilon, C}(d)$.
\end{thm}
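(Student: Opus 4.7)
The plan is to use a two-round sprinkling argument: split the percolation into a first round with probability $p_1 \coloneqq \frac{1+\epsilon/2}{d}$ and an independent second round with probability $p_2 = \Theta(\epsilon/d)$ chosen so that $1-(1-p_1)(1-p_2)=p$. The first round, analysed via a BFS coupling with a Galton--Watson branching process, will produce many components of order at least $k_0 = \Theta(d)$ totalling roughly $y \cdot n$ vertices; the second round, powered by the isoperimetric properties of the product graph, will glue all of these large components into a single giant while leaving all small components of order $O(d)$.

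For the first round, I would explore the component of a typical vertex $v$ in $G_{p_1}$ via BFS and couple it with a Galton--Watson process whose offspring distribution is $\mathrm{Bin}(d-1,p_1)$ and whose mean is $1+\epsilon/2+o(1)$. Because each base graph $G^{(j)}$ has order at most $C$, balls in $G$ of radius $r$ have size at most some fixed polynomial in $d$ and $r$, and the number of short cycles through any fixed vertex is negligible; hence the BFS behaves like a true tree process until $\Theta(\log n / \log d)$ generations have been revealed. The standard two-regime dichotomy then holds: with probability close to $1-y(\epsilon/2)$ the exploration dies after $O_\epsilon(1)$ generations, and with probability close to $y(\epsilon/2)$ it survives long enough to expose at least $k_0 = \Theta(d)$ vertices. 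A first-moment and second-moment computation shows that \textbf{whp} precisely $(1+o(1)) y(\epsilon/2) n$ vertices lie in components of order at least $k_0$ in $G_{p_1}$.

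For the second round, the input is an isoperimetric estimate for $G$ in the spirit of Harper's Theorem~\ref{th: Harper}: any set $S$ with $k_0 \le |S| \le |V(G)|/2$ satisfies $|\partial_G(S)| = \omega(d)$. Given any ``large'' component $C$ of $G_{p_1}$, the sprinkling edges of probability $p_2$ across $\partial_G(C)$ connect $C$ to some other large component with probability $1-\exp(-\Omega(\epsilon))$, and a careful union bound over partitions of the large components forces them all to coalesce into a single giant. This simultaneously yields uniqueness and the bound $O_{\epsilon,C}(d)$ on every other component: any surviving component of order much larger than $d$ would have edge-boundary $\omega(d)$ and would almost surely have merged with the giant. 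Finally, by iterating the sprinkling, or by running the whole argument with $p_1$ replaced by $\frac{1+\epsilon'}{d}$ for $\epsilon'$ arbitrarily close to $\epsilon$, the constant $y(\epsilon/2)$ can be pushed up to the true survival probability $y(\epsilon)$ defined in~\eqref{survival prob}.

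The main obstacle will be making the BFS coupling sharp enough to hit the exact asymptotic $(1+o(1)) y n$ rather than merely $\Omega(n)$. This requires controlling two accumulated errors: the $\mathrm{Bin}$-to-Poisson approximation along the exploration, and the loss of potential children due to already-revealed vertices in $G$. The second of these is where the product structure genuinely matters---one must show that over the course of an exploration involving only $O(\sqrt{n})$ vertices, say, the expected number of edges back to the explored set is $o(1)$ per step, which in turn reduces to a bound on short closed walks in $\square_{j=1}^t G^{(j)}$ that exploits the fact that the base graphs have bounded order. A secondary difficulty is that, unlike in the $G(n,p)$ setting, one cannot sprinkle from a single vertex since $|\partial_G(\{v\})|=d$ is not much larger than $1/p_2$; the argument must operate at the level of entire components and therefore genuinely relies on a global isoperimetric inequality for $G$.
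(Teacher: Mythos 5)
The theorem you are asked to prove is not proved in this paper at all: it is Theorem~\ref{th: dekk22}, imported verbatim as ``Theorem~2 in \cite{DEKK22}'' and used as a black box (for instance to define the ``early giant'' $L_1'$ in Section~\ref{expansion}). There is therefore no in-paper proof to compare against. Nevertheless your sketch does follow the standard two-round template used throughout this paper (compare the $G_{p_1}\cup G_{p_2}$ decomposition at the start of Section~\ref{expansion}) and in the literature on hypercube percolation, so let me assess it on its own terms.

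Where the sketch is on solid ground: the two-round split, the BFS/Galton--Watson coupling to identify the vertices in ``large'' (order $\geq\Theta(d)$) components, and the observation that isoperimetry in $G$ (Theorem~\ref{th: iso 2}) makes the $G$-edge-boundary of any set of size $\Theta(d)$ already of order $\Theta(d^2)$, not merely $\Theta(d)$, so that sprinkling at rate $\Theta(1/d)$ produces $\Theta(\epsilon d)$ boundary edges per large component. The concern you flag about wasted edges back into the explored set being controlled via the product structure is also the right concern, though the cleaner way to control it is again via the isoperimetric bound: an explored set of order $m$ has internal average degree $O(\log_2 m)$ by Theorem~\ref{th: iso 1}, so the fraction of wasted boundary edges is $O((\log m)/d)=o(1)$ throughout the exploration.

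The main gap is in the sprinkling/merging step, and it is not cosmetic. You claim a fixed large component $C$ merges with ``some other large component'' with probability $1-\exp(-\Omega(\epsilon))$ and then wave at a union bound. Two problems. First, $1-\exp(-\Omega(\epsilon))$ is bounded away from $1$ and will not survive a union bound over the $\Theta(n/d)$ large components, let alone over all partitions of them. Second, and more fundamental, the sprinkled edges across $\partial_G(C)$ do not a priori land in other large components at all; most of the $G$-boundary of $C$ may be adjacent to vertices in small components or in $V(G)\setminus V(L_1')$, and an edge into such a vertex does nothing for you. Resolving this is exactly what the density lemmas of \cite{DEKK22} are for (this paper restates one as Lemma~\ref{l: density}: whp every vertex is within distance two of $\Omega(d^2)$ vertices of $L_1'$), and the merging argument must be run on short \emph{paths} into $L_1'$, as in Lemma~\ref{l: disjoint paths} here, rather than on single boundary edges. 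Your sketch omits the density input entirely, which is the genuinely new ingredient relative to the $G(n,p)$ argument.

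A secondary gap: the statement requires the giant to have order \emph{exactly} $(1+o(1))yn$, and your sketch only supplies a lower bound. You also need a matching upper bound, which usually comes from running the GW coupling directly on $G_p$ (not on $G_{p_1}$) and showing via first- and second-moment arguments that whp only $(1+o(1))y(\epsilon)n$ vertices lie in components of order $\geq\Theta(d)$; the giant is then sandwiched. Your final paragraph about ``iterating the sprinkling'' or letting $\epsilon'\to\epsilon$ is doing the work of this upper bound but is too vague to be checked: $\delta$ (hence $\epsilon'$) is fixed during the sprinkling, so $y(\epsilon/2)$ cannot ``converge'' to $y(\epsilon)$ inside a single run of the argument, and one needs to be explicit about the order of quantifiers.
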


This can perhaps be viewed as an example of the \emph{universality} of the phase transition that $G(n,p)$ undergoes --- in many percolation models various aspects of the phase transition close to the critical point seem to behave in a quantitatively similar manner, under the right rescaling, independently of the host graph (see, for example \cite{HH17}). In this case, the proportion $y$ of the host graph $G$ which is covered by the giant component is the same as arises in $G(n,p)$ \cite{ER60}, but also in supercritical percolation in the hypercube \cite{AKS81,BKL92}, pseudo-random graphs \cite{FKM04}, and many other percolation models, see for example \cite{MR95,BJR07}.

Whilst the internal structure of the giant component in $G(n,p)$ is reasonably well understood, in other percolation models, such as hypercube percolation, many basic questions about the structure of the giant component remain unanswered, although in light of this universality phenomena there are natural conjectures suggested by the structure in $G(n,p)$. Since the expansion properties of the giant component in $G(n,p)$ have been key to understanding its likely structural properties, in order to better understand the structure of the giant component in percolated high-dimensional product graphs it is natural to ask about its expansion properties, and in order to answer this question it seems crucial to understand first the isoperimetric properties of general high-dimensional product graphs.

A well-known result of Chung and Tetali \cite{CT98} (see also Tillich \cite{T00}) shows that, at least on a broad scale, the isoperimetric properties of a product graph are closely related to those of the base graphs.
\begin{thm}[Theorem 2 of \cite{CT98}]\label{th: CT98}
Let $G^{(1)},\ldots, G^{(t)}$ be such that $|V(G^{(j)}|>1$ for all $j\in [t]$. Let $G = \square_{j=1}^{t}G^{(j)}$. Then
\[
\min_j \left\{i\left( G^{(j)}\right) \right\} \geq i(G) \geq \frac{1}{2}\min_j \left\{i\left( G^{(j)}\right) \right\}.
\]
\end{thm}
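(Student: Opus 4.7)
The upper bound is obtained by a direct construction. I would pick an index $j^* \in [t]$ achieving $\min_j i(G^{(j)})$ together with a set $S^* \subseteq V(G^{(j^*)})$ of size at most $|V(G^{(j^*)})|/2$ attaining $i(G^{(j^*)})$. Lifting $S^*$ to the cylindrical set $S := \{v \in V(G) : v_{j^*} \in S^*\}$, every boundary edge of $S$ in $G$ lies in direction $j^*$, so a direct count gives $|\partial_G(S)| = |\partial_{G^{(j^*)}}(S^*)| \cdot \prod_{j \neq j^*} |V(G^{(j)})|$, and hence $|\partial_G(S)|/|S| = i(G^{(j^*)}) = \min_j i(G^{(j)})$.

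For the lower bound, set $h := \min_j i(G^{(j)})$ and fix any $S \subseteq V(G)$ with $|S| \leq |V(G)|/2$. Since every edge of $G$ differs in exactly one coordinate, I would decompose the edge-boundary by direction as
\[
|\partial_G(S)| = \sum_{j=1}^{t} \sum_{\vec{w}} |\partial_{G^{(j)}}(T_{\vec{w}}^{(j)})|,
\]
where, for each $j \in [t]$ and each tuple $\vec{w}$ of coordinates outside direction $j$, the slice $T_{\vec{w}}^{(j)} \subseteq V(G^{(j)})$ is the set of values $v_j$ for which the corresponding vertex of $G$ lies in $S$. Applying the base-graph isoperimetric inequality slice-wise (using $|\partial(T)| = |\partial(V \setminus T)|$ so the bound holds on either side of $T$), I obtain $|\partial_{G^{(j)}}(T)| \geq h \cdot \min(|T|, |V(G^{(j)})| - |T|)$. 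Summing reduces the theorem to the combinatorial inequality
\[
\Sigma \;:=\; \sum_{j=1}^{t} \sum_{\vec{w}} \min(|T_{\vec{w}}^{(j)}|,\, |V(G^{(j)})| - |T_{\vec{w}}^{(j)}|) \;\geq\; \frac{|S|}{2}.
\]

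The principal obstacle is this combinatorial inequality. A double-counting argument rewrites $\Sigma = \sum_{v \in S} \ell(v) + \sum_{v \notin S} \eta(v)$, where $\ell(v)$ (resp.~$\eta(v)$) counts the directions $j$ in which the slice $T_v^{(j)}$ through $v$ is \emph{light}, with $|T| \leq |V(G^{(j)})|/2$ (resp.~\emph{heavy}, with $|T| > |V(G^{(j)})|/2$). Since $\sum_{\vec{w}} |T_{\vec{w}}^{(j)}| = |S|$ for every $j$, the defect $t|S| - \Sigma$ equals $2\sum_j \sum_{\vec{w}\,\text{heavy}}(|T| - |V(G^{(j)})|/2)$, namely the total excess of heavy slices beyond half-filled. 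The plan is to bound this defect by $(t - \tfrac{1}{2})|S|$ by exploiting the global size constraint: if $S$ occupied heavy slices in too many $(j, \vec{w})$ pairs, then summing heavy-slice sizes in any single direction would already force $|S| > |V(G)|/2$, contradicting our assumption. Quantifying this trade-off via a careful averaging over directions, or via an extremal analysis over the possible slice-size distributions, then completes the argument.
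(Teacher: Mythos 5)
This theorem is quoted from Chung and Tetali (\cite{CT98}) and is not proved in the paper, so there is no in-paper argument to compare against; what follows is an assessment of your proposal on its own terms. Your upper bound is correct and standard: lifting an optimal set $S^*\subseteq V(G^{(j^*)})$ to the cylinder $S=\{v: v_{j^*}\in S^*\}$ gives $|S|\le |V(G)|/2$ and $|\partial_G(S)|/|S|=i(G^{(j^*)})$, so $i(G)\le\min_j i(G^{(j)})$. The opening of your lower bound is also sound: decomposing $\partial_G(S)$ by direction into slice boundaries and applying the base-graph isoperimetric inequality slice-wise reduces the theorem to the combinatorial claim $\Sigma:=\sum_j\sum_{\vec w}\min\bigl(|T^{(j)}_{\vec w}|,\,|V(G^{(j)})|-|T^{(j)}_{\vec w}|\bigr)\ge |S|/2$ whenever $|S|\le |V(G)|/2$.

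The gap is that this last inequality is precisely where all the work lies, and the route you sketch for it will not close it. You propose to bound the defect $t|S|-\Sigma=2\sum_j\sum_{\text{heavy}}\bigl(|T|-|V(G^{(j)})|/2\bigr)$ by ``averaging over directions'' or an ``extremal analysis over the possible slice-size distributions.'' But the defect is \emph{not} a function of slice-size distributions alone: if one only imposes the per-direction marginals $\sum_{\vec w}|T^{(j)}_{\vec w}|=|S|$ and the box constraints $0\le |T|\le |V(G^{(j)})|$, then the minimum of $\Sigma$ over the resulting polytope is $0$ (make every slice full or empty, direction by direction), because $\min(x,m-x)$ is concave. The only thing preventing this from actually happening is that the slice profiles in different directions are coupled: choosing all slices in direction $j$ to be full or empty forces $S$ to be a cylinder $S'\times V(G^{(j)})$, which then constrains every other direction's slices (for $t=2$ this is a Gale--Ryser-type compatibility of row/column sums). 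Your sketch never invokes these cross-direction constraints, and the per-direction bound $\sum_{\text{heavy in dir }j}\sigma\le |S|/2$ only yields $\Sigma\ge 0$, which is vacuous. So the reduction is fine, but the final combinatorial inequality is asserted, not proved, and the suggested extremal/averaging argument would, as stated, ``prove'' something false about the relaxed problem. A genuine proof must exploit the fact that the slices arise from a single set $S$ --- for instance, via an induction on $t$ that peels off one coordinate and controls the excess of heavy slices using the last direction's contribution --- or one can sidestep the combinatorics entirely by arguing as Chung--Tetali do.
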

However, on a finer scale we might expect smaller sets in a product graph to expand by a larger factor than is suggested by Theorem \ref{th: CT98}. Indeed, in the case of the hypercube, Theorem \ref{th: CT98} gives a much weaker bound on the expansion of small sets than is implied by Theorem \ref{th: Harper}, where the expansion of small sets is asymptotically optimal, and this optimal expansion is critical to understanding the distribution of small percolation clusters in $Q^d_p$. It is thus natural to ask whether similar isoperimetric results hold on a finer scale for arbitrary product graphs.

\subsection{Main results}
Our first main results are two edge-isoperimetric inequalities for high-dimensional product graphs, under mild assumptions on the base graphs. The first concerns high-dimensional product graphs whose base graphs are bounded and regular.
\begin{theorem}\label{th: iso 1}
    Let $C>1$ be an integer. For all $j \in [t]$, let $G^{(j)}$ be a $d_j$-regular graph with $1<|V(G^{(j)})|\le C$. Let $G=\square_{j=1}^tG^{(j)}$, let $n\coloneqq|V(G)|$ and let $d \coloneqq \sum_{j=1}^t d_j$. Then for any $k\in[n]$,
   \begin{align*}
        i_k(G) \ge d-(C-1)\log_2 k.
    \end{align*}
\end{theorem}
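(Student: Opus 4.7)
The plan is to deduce the theorem from a Shearer-type entropy argument, which generalises a standard information-theoretic proof of Harper's inequality (Theorem \ref{th: Harper}).

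First, I would reformulate the statement as an upper bound on the number of internal edges. Since $G$ is $d$-regular, for any $S \subseteq V(G)$ with $|S| = k$ the handshake lemma gives $|\partial(S)| = dk - 2 e_G(S)$, so the theorem is equivalent to showing
\[
2\, e_G(S) \le (C-1)\, k \log_2 k.
\]

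Next, I would exploit the product structure via entropy. Let $X = (X_1, \ldots, X_t)$ be a uniformly random element of $S$, so $H(X) = \log_2 k$. Applying Shearer's inequality to the family $\{[t] \setminus \{j\} : j \in [t]\}$, in which every index is covered $t-1$ times, and using $H(X_{-j}) = H(X) - H(X_j \mid X_{-j})$, one obtains
\[
\sum_{j=1}^{t} H(X_j \mid X_{-j}) \le H(X) = \log_2 k.
\]
For each direction $j$ and each line $\ell$ in direction $j$ (that is, a copy of $G^{(j)}$ in $G$ obtained by fixing all coordinates but the $j$-th), set $k_\ell^j := |S \cap \ell|$. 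A direct computation yields $k \cdot H(X_j \mid X_{-j}) = \sum_\ell k_\ell^j \log_2 k_\ell^j$, since conditional on $X_{-j}$ the variable $X_j$ is uniform on $S \cap \ell$.

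Finally, I would bound the edges line-by-line and combine. Every edge of $G$ lies in a unique line; the edges of $G$ inside $S$ and in direction $j$ are exactly the edges of the induced subgraph of $G^{(j)}$ on $S \cap \ell$, summed over lines $\ell$ in direction $j$. Since $|V(G^{(j)})| \le C$, the maximum degree of any induced subgraph of $G^{(j)}$ is at most $C-1$, giving $2\, e_{G^{(j)}}(S \cap \ell) \le (C-1)\, k_\ell^j$. Because $\log_2 k_\ell^j \ge 1$ whenever $k_\ell^j \ge 2$, and both sides vanish when $k_\ell^j \le 1$ (using the convention $0 \log 0 = 0$), this promotes to
\[
2\, e_{G^{(j)}}(S \cap \ell) \le (C-1)\, k_\ell^j \log_2 k_\ell^j.
\]
Summing over all lines and all directions and invoking the Shearer bound above then delivers $2\, e_G(S) \le (C-1)\, k \log_2 k$, as required.

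There is no serious obstacle: the only trick is the observation that the trivial max-degree bound is compatible with the Shearer-style logarithmic bound, because the factor $\log_2 k_\ell^j$ is at least $1$ precisely in the range of line-sizes where the edge count is nonzero. The entropy step itself is standard, and it is the combination of Shearer applied to the product decomposition with the pointwise line estimate that gives the refined expansion of small sets, going beyond the coarse bound of Theorem \ref{th: CT98}. Note that for $C = 2$ the base graphs are single edges, $G$ is the hypercube, and the inequality recovers Harper's bound exactly.
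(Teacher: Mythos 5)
Your proof is correct, and it takes a genuinely different route from the paper's. The paper proves this theorem by induction on the dimension: it partitions $S$ according to the first coordinate, bounds the intra-class density inductively, and controls the cross-class edges with a bespoke subadditivity estimate (Corollary~\ref{c: iso 1}, built on the entropy bound Lemma~\ref{l: iso 1}). You instead apply Shearer's inequality directly to a uniform random vertex of $S$ --- the same reduction the paper uses in the proof of Theorem~\ref{th: iso 2} --- and then estimate each conditional entropy $H(X_j\mid X_{-j})$ via the per-line bound $2e_{G^{(j)}}(S\cap\ell)\le(C-1)\,k_\ell^j\log_2 k_\ell^j$, which you correctly deduce from the trivial maximum-degree estimate $2e_{G^{(j)}}(S\cap\ell)\le(C-1)k_\ell^j$ together with the observation that $\log_2 k_\ell^j\ge 1$ whenever there is an edge to count and both sides vanish otherwise. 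Your argument thus unifies Theorems~\ref{th: iso 1} and~\ref{th: iso 2} under a single Shearer framework, differing only in how $H(X_j\mid X_{-j})$ is analysed (here via degrees, there via connectedness of the base graphs), and it dispenses with Lemma~\ref{l: iso 1} and Corollary~\ref{c: iso 1} entirely. It also isolates cleanly that the density estimate $2e_G(S)\le(C-1)|S|\log_2|S|$ needs only the cardinality bound $|V(G^{(j)})|\le C$, with regularity used solely to convert density into edge-boundary. The paper's inductive proof is, in return, elementary and self-contained, and Lemma~\ref{l: iso 1} may be of independent interest.
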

Observe that if $\log_2 k\ll d$, then Theorem \ref{th: iso 1} implies that $i_k(G) \ge (1-o(1))d$ which, since $G$ is $d$-regular, is asymptotically optimal. Furthermore, in the particular case of $Q^d$ we have that $C=2$, and this result recovers the tight bound for the hypercube (Theorem \ref{th: Harper}). Note, however, that when the base graphs are larger, there are $k \in [n]$ with $(C-1)\log_2 k > d$, for which Theorem \ref{th: iso 1} gives a trivial bound for $i_k(G)$.

The second isoperimetric inequality holds for high-dimensional product graphs whose base graphs are bounded and connected (and not necessarily regular), and gives an effective bound for larger values of $k$.
\begin{theorem}\label{th: iso 2}
    Let $C>1$ be an integer. For all $j \in [t]$, let $G^{(j)}$ be a connected graph with $1< |V(G^{(j)})|\le C$. Let $G=\square_{j=1}^tG^{(j)}$ and let $n\coloneqq|V(G)|$. Then for any $k\in[n]$,
    \begin{align*}
        i_k(G) \ge \frac{1}{C-1} \log_C \left(\frac{n}{k}\right).
    \end{align*}
\end{theorem}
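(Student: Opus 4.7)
The plan is to prove the inequality by induction on the number $t$ of base graphs. For the base case $t = 1$, the graph $G = G^{(1)}$ is connected on $m \le C$ vertices; if $k = m$ the bound is trivial since $\log_C 1 = 0$, while if $1 \leq k < m$ connectedness forces $|\partial(S)| \geq 1$, and since $k \leq m-1 \leq C-1$ and $\log_C(m/k) \leq \log_C C = 1$, the right-hand side $\frac{k}{C-1}\log_C(m/k)$ is at most $1$.

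For the inductive step I would write $G = G^{(1)} \square G'$ with $G' \coloneqq \square_{j=2}^{t} G^{(j)}$, and set $m \coloneqq |V(G^{(1)})|$ and $n' \coloneqq |V(G')|$. For each $v \in V(G^{(1)})$ and $u \in V(G')$ I define the row- and column-slices
\[
S_v \coloneqq \{u \in V(G') : (v,u) \in S\}, \qquad A_u \coloneqq \{v \in V(G^{(1)}) : (v,u) \in S\},
\]
with cardinalities $k_v$ and $a_u$, so that $\sum_v k_v = \sum_u a_u = k$. Since every edge of a Cartesian product projects onto an edge of exactly one factor, the edge-boundary splits cleanly as
\[
|\partial(S)| = \sum_{v \in V(G^{(1)})} |\partial_{G'}(S_v)| + \sum_{u \in V(G')} |\partial_{G^{(1)}}(A_u)|.
\]
Applying the inductive hypothesis to each $S_v$ (a subset of the $(t-1)$-fold product $G'$) and the base case to each $A_u$ in $G^{(1)}$ then yields
\[
|\partial(S)| \geq \frac{1}{C-1}\Bigl[k\log_C n - \sum_v k_v \log_C k_v - \sum_u a_u \log_C a_u \Bigr],
\]
using $mn' = n$ to combine the two $\log_C$ terms.

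The final and conceptually crucial step is to establish the entropic inequality $\sum_v k_v \log_C k_v + \sum_u a_u \log_C a_u \leq k \log_C k$. This is precisely the subadditivity of Shannon entropy: letting $(V,U)$ be the coordinates of a uniformly random point of $S$, one has $H(V,U) = \log_C k$, while $H(V) = \log_C k - \frac{1}{k}\sum_v k_v \log_C k_v$ and analogously for $H(U)$, so $H(V,U) \leq H(V) + H(U)$ rearranges to exactly the bound above. Substituting back gives $|\partial(S)| \geq \frac{k}{C-1}\log_C(n/k)$, closing the induction. The main obstacle is spotting this entropy step; once it is in place, the rest is a routine induction on the product dimension, and in particular no information is lost about the structure of $G^{(1)}$ beyond its connectedness and its vertex count.
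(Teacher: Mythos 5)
Your proposal is correct, and it takes a genuinely different route from the paper. The paper works globally: it applies Shearer's inequality to the family $\{[t]\setminus\{i\}\colon i\in[t]\}$ to obtain $H(X)\ge\sum_i H(X_i\mid X_{-i})$, and then bounds each conditional entropy $H(X_i\mid X_{-i})$ from below by a direct counting argument comparing the number of fibres with $r(x_{-i})<C_i$ to the directional boundary $|\partial_i(S)|$. You instead peel off one factor at a time by induction on $t$, using the exact decomposition
\[
|\partial(S)| = \sum_{v\in V(G^{(1)})}|\partial_{G'}(S_v)| + \sum_{u\in V(G')}|\partial_{G^{(1)}}(A_u)|,
\]
the inductive hypothesis on the $(t-1)$-fold product, the base case on a single small connected graph (which is where connectedness earns you $|\partial(S)|\ge 1$, just as in the paper it earns the bound on $H(X_i\mid X_{-i})$), and only plain subadditivity $H(V,U)\le H(V)+H(U)$ to merge the two sums. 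Since Shearer's inequality can itself be proved by induction via subadditivity and the chain rule, the two arguments are closely related; in effect you have unrolled the Shearer step into a recursion. What your version buys is self-containedness (subadditivity in place of Shearer as a black box) and a transparent reduction of everything to the base-case isoperimetric estimate; what the paper's version buys is a single symmetric step in which the conditional entropies $H(X_i\mid X_{-i})$ appear directly as the natural quantities being summed, and no separate base-case analysis. Both are correct and give the same constant.
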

Note that, taking $C=2$, Theorem \ref{th: iso 2} also implies the classical edge-isoperimetric bound for the hypercube. Furthermore, in general Theorem \ref{th: iso 2} implies that $i_k(G)=\Omega\left(\ln \left(\frac{n}{k}\right)\right)$ for all $k\in[n]$, which recovers the asymptotic result of Tillich \cite{T00} on high-dimensional Cartesian \textit{powers} of graphs, which was proved using analytic methods inspired by isoperimetric problems in Riemannian geometry. Let us also mention a related result of Lev \cite{L15} which shows that $i_k(G)$ has the same asymptotic growth rate in any Abelian Cayley graph, where the implicit constant depends on the exponent of the underlying group. Moreover, for many different types of product graphs where the isoperimetric problem has been studied, among them Hamming graphs \cite{B99} and the $d$-dimensional torus graphs \cite{C02}, the bound given by Theorem \ref{th: iso 2} is known to be \textit{asymptotically tight} up to a multiplicative constant. In fact, as we will discuss in more detail in Section \ref{discussion}, it can be shown that Theorem \ref{th: iso 2} is asymptotically tight for \emph{any} high-dimensional product graph all of whose base graphs are isomorphic.

Using these new isoperimetric inequalities, we are able to derive several likely expansion properties of the giant component after percolation in a high-dimensional product graph whose base graphs are regular and of bounded order. These typical expansion properties which we will present, and their consequences, not only generalise but also improve substantially upon the known typical bounds in $Q^d_p$ given in \cite{EKK22}. We note that while we present the results in the supercritical regime, that is when $\epsilon>0$ is a small constant and $p=\frac{1+\epsilon}{d}$, the results naturally extend (with slight adaptations in the statements) to the sparse regime, that is, when $p=\frac{c}{d}$ for constant $c>1$.

Given a graph $G$, a subset $S\subseteq V(G)$ and $r\in \mathbb{N}$, we denote by $N^{r}_G(S)$ the $r$-\textit{th external neighbourhood} of $S$ in $G$, that is, the set of vertices in $V(G)\setminus S$ which are at distance at most $r$ from $S$ in $G$. When $r=1$, we omit the superscript. 

\begin{theorem}\label{th: expansion}
    Let $C>1$ be an integer. For all $j\in [t]$, let $G^{(j)}$ be a $d_j$-regular connected graph with $1<|V(G^{(j)})|\le C$. Let $G=\square_{j=1}^tG^{(j)}$, let $n\coloneqq|V(G)|$ and let $d \coloneqq \sum_{j=1}^t d_j$. Let $\epsilon>0$ be a small enough constant and let $p=\frac{1+\epsilon}{d}$. Let $L_1$ be the largest component in $G_p$. Then, there exists a positive constant $c=c(\epsilon)$ such that \textbf{whp},
    \begin{enumerate}[(a)]
        \item\label{i:all} for all $k\le \frac{3\epsilon n}{2}$ and all subsets $S\subseteq V(L_1)$ with $|S|=k$,
        \begin{align*}
             |\partial_{G_p}(S)|\ge \frac{c|S|}{d\ln d};
        \end{align*}
        \item\label{i:large} for all $\epsilon^2n\le k \le \frac{3\epsilon n}{2}$ and all subsets $S\subseteq V(L_1)$ with $|S|=k$,
        \begin{align*}
            |N_{G_p}(S)|\ge \frac{c|S|}{d\ln d}.
        \end{align*}
    \end{enumerate}
\end{theorem}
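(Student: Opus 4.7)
My plan is to combine the isoperimetric bounds of Theorems \ref{th: iso 1} and \ref{th: iso 2} with a two-round sprinkling of $G_p$. Write $p = 1 - (1-p_1)(1-p_2)$ with $p_1 = (1+\epsilon/2)/d$ and $p_2 \approx \epsilon/(2d)$. Applying Theorem \ref{th: dekk22} to round 1 with parameter $\epsilon/2$, the graph $G_{p_1}$ whp admits a unique giant component $L_1^{(1)}$ of order $(1+o(1))y(\epsilon/2)n$, with all other components of order $O_{\epsilon,C}(d)$. Since further edges can only merge components, $V(L_1) \supseteq V(L_1^{(1)})$ and $|V(L_1) \setminus V(L_1^{(1)})|$ is at most $(1+o(1))(y(\epsilon) - y(\epsilon/2))n = O(\epsilon n)$, so round 1 fixes most of $V(L_1)$ while round 2 sprinkles additional edges.

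For part (a), suppose for contradiction that some $S \subseteq V(L_1)$ with $|S| = k \le 3\epsilon n/2$ has $|\partial_{G_p}(S)| < ck/(d \ln d)$. Since the edge-boundary of $S$ decomposes as the disjoint union of edge-boundaries of its components in $G_p[S]$, averaging lets us assume $S$ is connected in $G_p$. As $G_p$ is sparse (expected degree $1+\epsilon$), the number of connected $k$-subsets of $G_p$ is $\exp(O(k))$ whp (counted via spanning subtrees). Now I would invoke host-graph isoperimetry on $S$: Theorem \ref{th: iso 1} gives $|\partial_G(S)| \ge (1-o(1))dk$ when $\log_2 k \ll d$, while Theorem \ref{th: iso 2} gives $|\partial_G(S)| \ge \Omega_C(k \log(n/k))$ otherwise. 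Treating the round-2 contribution to $\partial_{G_p}(S)$ as a binomial variable with mean $p_2 |\partial_G(S)|$, Chernoff yields the desired lower bound except with probability exponentially small in $p_2|\partial_G(S)|$, which after a careful case split on $k$ beats the reduced union bound for small enough $c = c(\epsilon)$.

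For part (b), the larger size range $\epsilon^2 n \le k \le 3\epsilon n/2$ permits a direct vertex-boundary analysis. By Theorem \ref{th: iso 2}, $|\partial_G(S)| \ge \Omega_{\epsilon,C}(k)$. If $|N_{G_p}(S)| < ck/(d \ln d)$, then every edge of $G_p$ from $S$ ends in $N_{G_p}(S)$, forcing at least $|\partial_G(S)| - d|N_{G_p}(S)| = \Omega_\epsilon(k)$ edges of $G$ between $S$ and $V(G) \setminus (S \cup N_{G_p}(S))$ to be absent from $G_p$. Viewed through sprinkling, these edges fail to be retained in round 2 with probability $(1-p_2)^{\Omega(k)} = \exp(-\Omega(n/d))$ for any fixed $(S, N_{G_p}(S))$, and the union bound must then be tamed by conditioning on the revealed round-1 structure.

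The main technical obstacle, I anticipate, is the union bound of part (b): a naive count of pairs $(S, N)$ with $|S| = \Theta(n)$ and $|N| = \Theta(n/(d\ln d))$ contributes entropy $\exp(\Theta(n))$, dominating the single-configuration failure probability $\exp(-\Theta(n/d))$ by a factor of $d$ in the exponent. The resolution must be to use the round-1 structure to drastically restrict plausible $S$: after round 1, $S \subseteq V(L_1)$ must be compatible with the fixed $L_1^{(1)}$ up to an $O(\epsilon n)$ correction, and within $L_1^{(1)}$ one may expect to bootstrap expansion from a more explicit description of the giant (e.g.\ via the \cite{DEKK22} analysis) so as to cut the effective entropy of bad $S$ by a factor of $d$. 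This cushion is precisely what the $\ln d$ denominator in the stated bound absorbs, which is why I expect the final expansion factor to be sharp only up to this polylogarithmic loss.
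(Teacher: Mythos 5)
Your high-level scaffolding matches the paper's: a two-round exposure with the bulk of the giant fixed after round one, a reduction of part~(a) to connected sets by decomposing into components of $G_p[S]$ (the paper executes this as a corollary of Theorem~\ref{th: connected expansion}, using that every small component of $G_p[S]$ still has at least one boundary edge because $L_1$ is connected), and an appeal to the host-graph isoperimetry to control the second-round expansion probability. You also correctly diagnose that the union bound over large sets is where a naive argument breaks: for $k = \Theta(n)$, $|\partial_G(S)| = O(k \log(n/k)) = O(k)$, so the second-round failure probability is only $\exp(-\Omega(k/d))$, swamped by an enumeration of $\exp(\Theta(k))$ sets (or even $\exp(\Theta(k))$ connected sets).

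However, you stop at the diagnosis. Your proposed resolution---``restrict plausible $S$ to be compatible with $L_1^{(1)}$ up to an $O(\epsilon n)$ correction'' and ``bootstrap expansion from a more explicit description of the giant''---does not actually reduce the entropy: a generic subset of $V(L_1')$ still has entropy $\exp(\Theta(n))$, and the round-1 structure by itself tells you nothing about which subsets are bad. The paper's decisive idea, which you are missing, is the \emph{double-counting} in Lemma~\ref{l: key lemma}: one enumerates a bad set $S \subseteq V(L_1')$ not directly, but via its boundary $N_{L_1'}(S)$ (respectively $\partial_{L_1'}(S)$). By the assumed failure of expansion this boundary has size only $O(k/(d \ln d))$, so there are at most $\binom{n}{O(k/(d\ln d))}$ choices of boundary; and, crucially, once the boundary is fixed, $S$ must be a union of connected components of $L_1' \setminus N_{L_1'}(S)$, whose count is bounded by the number of boundary edges plus one. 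This collapses the enumeration to $\exp(O(ck/d))$, which the probability bound $\exp(-\Omega(k \log(n/k)/d))$ from Lemma~\ref{l: disjoint paths} can absorb for $c$ small. Without this argument, or an equivalent substitute, your union bound for part~(b) and for the large-$k$ range of part~(a) does not close.

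A secondary gap: you say nothing concrete about the residue $S \cap (V(L_1) \setminus V(L_1'))$. This is not a negligible ``$O(\epsilon n)$ correction'' that can be ignored; the paper devotes Lemmas~\ref{l: Kd} and~\ref{l: key lemma 2} to showing that, whp, vertices in the residue cluster into $O(d)$-sized pieces attached to $L_1'$ and that connected sets straddling the residue still expand into $L_1' \setminus S$ after sprinkling, via another boundary-based double count. Both the double-counting enumeration and the residue analysis are essential to the theorem as stated, and neither appears in your outline.
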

We note that, since $G$ is $d$-regular, Theorem \ref{th: expansion}\ref{i:all} implies a lower bound of $\Omega\left(\frac{1}{d^2\ln d}\right)$ on the vertex-expansion of arbitrary subsets of $L_1$. Theorem \ref{th: expansion}\ref{i:large} then improves this by a factor of $d$ for linear-sized sets.

If we make the additional assumption that our subset $S\subseteq V(L_1)$ is \emph{connected}, that is, $G_p[S]$ is connected, then we are able to give stronger bounds on the expansion.
\begin{theorem}\label{th: connected expansion}
    Let $C>1$ be an integer. For all $j\in [t]$, let $G^{(j)}$ be a $d_j$-regular connected graph with $1<|V(G^{(j)})|\le C$. Let $G=\square_{j=1}^tG^{(j)}$, let $n\coloneqq|V(G)|$ and let $d \coloneqq \sum_{j=1}^t d_j$. Let $\epsilon>0$ be a small enough constant and let $p=\frac{1+\epsilon}{d}$. Let $L_1$ be the largest component in $G_p$. Then, there exists a positive constant $c=c(\epsilon)$ such that \textbf{whp} for all subsets $S\subseteq V(L_1)$ with $|S|=k$ and $G_p[S]$ connected,
    \begin{enumerate}[(a)]
        \item\label{i:small} for all $\frac{9\ln c \cdot d}{\epsilon^2}\le k \le n^{\epsilon^5}$,
        \begin{align*}
            |N_{G_p}(S)|\ge c|S|;
        \end{align*}
        \item\label{i:medium} for all $n^{\epsilon^5}\le k\le \frac{3\epsilon n}{2}$,
        \begin{align*}
             |\partial_{G_p}(S)|\ge \frac{c|S|\ln\left(\frac{n}{|S|}\right)}{d\ln d}.
        \end{align*}
    \end{enumerate}
\end{theorem}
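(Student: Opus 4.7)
For both parts the plan is a union bound over subsets $S \subseteq V(G)$ of size $k$, exploiting the crucial independence: for a fixed $S$, the event ``$G_p[S]$ is connected'' depends only on the edges of $G[S]$, while the events ``$|N_{G_p}(S)|$ is small'' and ``$|\partial_{G_p}(S)|$ is small'' depend only on the edges of $\partial_G(S)$. These edge sets are disjoint, so the corresponding probabilities multiply. In both parts I will bound $P(G_p[S] \text{ is connected}) \le \tau(G[S])\, p^{k-1}$, where $\tau(G[S])$ is the number of spanning trees of $G[S]$, and use the fact that $\sum_{|S|=k}\tau(G[S])$ is exactly the number of subtrees of $G$ of size $k$, which is at most $n(ed)^{k-1}$ by the standard count in a graph of maximum degree $d$.

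For part~(a), I apply Theorem~\ref{th: iso 1} to deduce $|\partial_G(S)| \ge k(d - (C-1)\log_2 k) \ge (1 - o(1)) k d$ for $k \le n^{\epsilon^5}$; here $(C-1)\log_2 k \le (C-1)\epsilon^5 \log_2 n = O(\epsilon^5 d)$ via $\log_2 n \le d \log_2 C$. To bound $P(|N_{G_p}(S)| < c|S|)$, I take a secondary union bound over candidate neighborhoods $T \subseteq N_G(S)\setminus S$ of size $ck$: for each such $T$, the probability that no edge from $S$ to $N_G(S)\setminus(S\cup T)$ is present is at most $(1-p)^{(1-c-o(1))kd} \le \exp(-(1+\epsilon)(1-c-o(1))k)$, and there are at most $\binom{kd}{ck} \le (e/c)^{ck}$ choices of $T$. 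Choosing $c$ small enough, the combined estimate is $\exp(-(1+\epsilon - o(1))k)$. Summing over $S$ of size $k$ and using the subtree count yields
\[
n(e(1+\epsilon))^{k-1} \exp(-(1+\epsilon - o(1))k) \le n \exp\bigl(-k(\epsilon^2/2 + o(1))\bigr),
\]
via $\ln(1+\epsilon) - \epsilon \approx -\epsilon^2/2$. This is $o(1/n^2)$ whenever $k \ge \Omega(d\ln C/\epsilon^2)$ (matching, up to the constant, the hypothesis on the lower end of the range for $k$), and a further union bound over $k$ finishes part~(a).

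For part~(b), with $n^{\epsilon^5} \le k \le 3\epsilon n/2$, I use Theorem~\ref{th: iso 2} to get $|\partial_G(S)| \ge k \log_C(n/k)/(C-1)$. A Chernoff bound then yields $P(|\partial_{G_p}(S)| < h) \le \exp(-(1-o(1)) p|\partial_G(S)|/2)$, since the target $h = ck\log(n/k)/(d\log d)$ satisfies $h/(p|\partial_G(S)|) = O(1/\log d) \to 0$. The main obstacle will be that, for $k$ near $n^{\epsilon^5}$, the exponential subtree count $(e(1+\epsilon))^k$ is of the same order as the Chernoff tail $\exp(-\Theta(p|\partial_G(S)|))$, and the naive estimate is borderline. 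The plan to close this gap is to refine the count of witnesses by invoking Theorem~\ref{th: expansion}\ref{i:all} (already in hand) to restrict the union bound to subtrees that lie inside $L_1$, whose maximum degree is whp much smaller than $d$, and to use a sharper KL-divergence form of the large-deviation estimate (which gives $P \le \exp(-(1-o(1))p|\partial_G(S)|)$ rather than a factor $1/2$). An equivalent route is a two-round sprinkling $p = p_1 + p_2$ in which the first exposure controls the pool of admissible $S$ via the structure of the giant component in $G_{p_1}$, and the second exposure supplies the extra boundary edges across $\partial_G(S)$ whose count is $p_2 |\partial_G(S)| \gg h$. In either case, the delicate step is calibrating the constants so that the improved tail narrowly beats the refined subtree count.
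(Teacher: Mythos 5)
For part~(a), your approach is essentially the paper's: a first-moment union bound over trees (so that connectivity of $G_p[S]$ costs at most $t_k(G)p^{k-1}\le n(edp)^{k-1}$ via Lemma~\ref{l: trees}), the edge-isoperimetric bound of Theorem~\ref{th: iso 1} to get $|\partial_G(S)|\ge (1-\epsilon^4)kd$ for $k\le n^{\epsilon^5}$, and a secondary union over candidate small external neighbourhoods. The paper phrases the second step by bounding the matching number of the auxiliary bipartite graph $\Gamma(S,p)$ and summing over maximal matchings, but your ``union over $T$ of size $ck$, requiring all of $\partial_G(S)$ avoiding $T$ to be absent'' is a comparable computation and gives the same exponent $\exp(-\Omega(\epsilon^2 k))$ after calibrating $c$; the numerics match the claimed lower bound $k=\Omega(d\ln C/\epsilon^2)$.

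For part~(b), however, there is a genuine and substantial gap, and it is not merely a matter of sharpening constants. The problem is the enumerative term, not the tail. For $k=\Theta(n)$ (say $k=\epsilon^3 n$), Theorem~\ref{th: iso 2} only gives $|\partial_G(S)|=\Theta(k\ln(n/k))=\Theta(k)$, hence $p|\partial_G(S)|=\Theta(k/d)$ and any Chernoff-type tail you can extract is at best $\exp(-\Theta(k/d))$. Meanwhile the tree count times $p^{k-1}$ is $n(e(1+\epsilon))^{k-1}=\exp(\Theta(k))$. The union bound is then off by a factor $\exp\bigl(\Theta(k)-\Theta(k/d)\bigr)$, an exponential deficit of order $\exp(\Theta(k))$. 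Replacing the factor $1/2$ in the exponent by a KL-sharp $1-o(1)$, or reducing the effective maximum degree from $d$ to $\Theta(\ln d)$, each changes the enumerative exponent by at most a lower-order additive or multiplicative term and cannot close an $\exp(\Theta(k))$ gap. You flag this yourself as ``borderline,'' but it is actually a complete failure of the first-moment strategy for linear $k$; this is precisely the point the paper emphasises in Section~\ref{outline}.

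What is missing is the paper's key double-counting idea (Lemma~\ref{l: key lemma}): one only needs to control sets $S$ that do \emph{not} already expand in the early giant $L_1'$, and for such $S$ the external neighbourhood $N_{L_1'}(S)$ and boundary $\partial_{L_1'}(S)$ both have size $o(k/d)$. Since $L_1'$ is connected, removing $N_{L_1'}(S)$ leaves at most $|\partial_{L_1'}(S)|+1$ components and $S$ must be a union of some of them, so one can enumerate the bad $S$ by first choosing the small neighbourhood and then a small set of components, giving a count of only $\exp(O(k/d))$ rather than $\exp(\Theta(k))$. This is then overwhelmed by the sprinkling probability of Lemma~\ref{l: disjoint paths}, which itself relies on the density Lemma~\ref{l: density} and Theorem~\ref{th: iso 2}. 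Additionally, $S$ may contain many vertices in the residue $L_1\setminus L_1'$, which requires a separate analysis (Lemmas~\ref{l: Kd}, \ref{l: middle sets lemma} and \ref{l: key lemma 2}) that your sketch does not touch. Your mention of two-round sprinkling correctly identifies the exposure scheme, but the role of the first exposure is not to shrink the pool of trees in the way you suggest; it is to furnish a connected early giant against whose boundary the double count can be run, and to make the residue tractable. Without the boundary-based enumeration and the residue analysis, the argument for part~(b) does not go through.
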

One interesting interpretation of Theorem \ref{th: connected expansion}, noting that the bound in Theorem \ref{th: connected expansion}\ref{i:small} implies the bound in Theorem \ref{th: connected expansion}\ref{i:medium} for the same range of $k$, is as a \emph{sparsification} of Theorem \ref{th: iso 2}, and so in the particular case of the hypercube a sparsification of Harper's theorem. In other words, recalling that we are interested in percolation with probability $p=\Theta\left(\frac{1}{d}\right)$, broadly Theorem \ref{th: connected expansion} tells us that, if we restrict ourselves to connected subsets which are not too small, then the naive isoperimetric inequality that holds in expectation in $G_p$ by Theorem \ref{th: iso 2} for a given set, actually holds \textbf{whp} up to a logarithmic factor for all sets simultaneously. We note that the restriction to large connected sets here is necessary, due to the likely existence of bare paths of length $\Theta(d)$ in $G_p$, which can be shown by elementary arguments, which are connected but exhibit poor expansion, and in fact the likely existence of a disjoint family of such paths of large total volume.

Let us make a few clarifying remarks about Theorems \ref{th: expansion} and \ref{th: connected expansion}. We note first that \textit{Theorem \ref{th: connected expansion} implies Theorem \ref{th: expansion}\ref{i:all}}. Indeed, given such a (not necessarily connected) set $S$, each component $K$ of $G_p[S]$ either has order at least $\frac{9d \ln C}{\epsilon^2}$, and hence by Theorem \ref{th: connected expansion} \textbf{whp} has edge-boundary at least $c\frac{|K|}{d\ln d}$, or has size at most $\frac{9d \ln C}{\epsilon^2}$ and at least one edge in its boundary, since $L_1$ is connected, and hence has edge-boundary of order $\Omega\left(\frac{|K|}{d}\right)$. Since the edge-boundaries for different components are disjoint, the claim follows.

We note further that the results in Theorems \ref{th: expansion} and \ref{th: connected expansion} are (almost-)optimal for a wide range of choices of $k$. Indeed, since $|N_G(S)| \leq |S|d$ for all subsets $S$, a simple first-moment calculation shows that Theorem \ref{th: connected expansion}\ref{i:small} is optimal up to the constant factor. Moreover, Theorem \ref{th: connected expansion}\ref{i:medium} (and hence also Theorem \ref{th: expansion}\ref{i:large}) are optimal up to the logarithmic factor in $d$. Indeed, consider the particular example of $Q^d_p$ and let $Q'$ be the subcube of $Q^d$ obtained by fixing the first $\log_2 x$ coordinates to be $0$, noting that $|V(Q')|=\frac{n}{x}\coloneqq k$, and that every vertex in $Q'$ is adjacent to at most $\log_2 x =\log_2\left(\frac{n}{k}\right)$ vertices in $Q^d\setminus Q'$. Therefore, by a Chernoff-type bound, \textbf{whp} the edge-boundary of $V(Q')$ (and hence its vertex-boundary) in $Q^d_p$ has order $O\Big(\frac{k \ln \left(\frac{n}{k}\right)}{d}\Big)$. In particular, if $\log_2x\ll \epsilon d$, then $Q'_p$ is supercritical and contains a connected  subset $S$ of order $\Theta(k)$, whose edge-boundary (and hence vertex-boundary) has size at most that of $V(Q')$, and hence is of order $O\Big(\frac{k \ln \left(\frac{n}{k}\right)}{d}\Big) = O\Big(\frac{ |S|\ln \left(\frac{n}{|S|}\right)}{d}\Big)$.

Finally, it is worth comparing Theorem \ref{th: expansion} with the expansion properties of the giant component of $Q^d_p$, as given in \cite{EKK22}. There, it was shown that for any set $S\subseteq V(L_1)$, \textbf{whp} $|N_{G_p}(S)|=\Omega\left(\frac{|S|}{d^5}\right)$, and for linear-sized subsets $S$ \textbf{whp} $|N_{G_p}(S)|=\Omega\left(\frac{|S|}{d^2\ln d}\right)$. In comparison, as mentioned above, it follows from Theorem \ref{th: expansion} that \textbf{whp} for any set $S\subseteq V(L_1)$, $|\partial_{G_p}(S)|=\Omega\left(\frac{|S|}{d\ln d}\right)$ and ${|N_{G_p}(S)|=\Omega\left(\frac{|S|}{d^2\ln d}\right)}$, and for linear-sized subsets, \textbf{whp} $|N_{G_p}(S)|=\Omega\left(\frac{|S|}{d\ln d}\right)$. 

A particularly interesting consequence that we can derive from Theorem \ref{th: expansion}\ref{i:large} is that typically $L_1$ contains a linear-sized subgraph which is a good expander at all scales.
\begin{theorem}\label{th: expander}
    Let $C>1$ be an integer. For all $j\in [t]$, let $G^{(j)}$ be a $d_j$-regular connected graph with $1<|V(G^{(j)})|\le C$. Let $G=\square_{j=1}^tG^{(j)}$, let $n\coloneqq|V(G)|$ and let $d \coloneqq \sum_{j=1}^t d_j$. Let $\epsilon>0$ be a small enough constant and let $p=\frac{1+\epsilon}{d}$. Let $L_1$ be the largest component in $G_p$. Then, there exists a positive constant $c=c(\epsilon)$ such that \textbf{whp} the following holds. There exists a subgraph $H\subseteq L_1$ such that $|V(H)|\ge \frac{3\epsilon n}{2}$, and for every $S\subseteq V(H)$ with $|S|\le \frac{|V(H)|}{2}$,
     \begin{align*}
         |N_{H}(S)|\ge \frac{c|S|}{d\ln d}.
     \end{align*}
\end{theorem}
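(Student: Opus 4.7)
The plan is to construct $H$ via a \emph{sculpting} procedure which iteratively removes poorly-expanding subsets of $L_1$ until none remain. Let $c_0$ denote the constant provided by Theorem \ref{th: expansion}\ref{i:large}, set $H_0 \coloneqq L_1$, and at step $i$ proceed as follows: if there exists $S_i \subseteq V(H_i)$ with $|S_i| \le |V(H_i)|/2$ and $|N_{H_i}(S_i)| < \frac{c_0 |S_i|}{2d\ln d}$, set $H_{i+1} \coloneqq H_i[V(H_i) \setminus S_i]$; otherwise halt and take $H \coloneqq H_i$. Since $V(L_1)$ is finite the process terminates, and by construction every $S \subseteq V(H)$ with $|S| \le |V(H)|/2$ satisfies $|N_H(S)| \ge \frac{c_0 |S|}{2d\ln d}$, which is the expansion bound demanded of $H$ (with constant $c \coloneqq c_0/2$). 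It remains to show that \textbf{whp} $|V(H)| \ge \frac{3\epsilon n}{2}$, where we condition on the \textbf{whp} events supplied by Theorems \ref{th: dekk22} and \ref{th: expansion}\ref{i:large}.

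Let $T \coloneqq V(L_1) \setminus V(H) = \bigsqcup_i S_i$. The key telescoping observation is that any vertex $v \in V(H)$ adjacent in $G_p$ to some $u \in T$ was, at the step $j$ when $u \in S_j$ was removed, still present in $H_j$ (since $v$ is never removed), and hence $v \in N_{H_j}(S_j)$. Summing over the removed sets yields
\begin{align*}
|N_{G_p}(T)| \;\le\; \sum_i |N_{H_i}(S_i)| \;<\; \sum_i \frac{c_0 |S_i|}{2d\ln d} \;=\; \frac{c_0 |T|}{2d\ln d}.
\end{align*}
On the other hand, whenever $|T| \in [\epsilon^2 n, \tfrac{3\epsilon n}{2}]$, Theorem \ref{th: expansion}\ref{i:large} applied to $T \subseteq V(L_1)$ forces $|N_{G_p}(T)| \ge \frac{c_0 |T|}{d\ln d}$, contradicting the displayed inequality.

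To deduce $|V(H)| \ge \frac{3\epsilon n}{2}$, I would examine the first step $i^*$ at which $|T_{i^*}| \ge \epsilon^2 n$. Using $|T_{i^*-1}| < \epsilon^2 n$ together with $|S_{i^*-1}| \le |V(H_{i^*-1})|/2 \le |V(L_1)|/2 \le (1+o(1))\epsilon n$ (via $y(\epsilon) \le 2\epsilon$ for small $\epsilon$), one verifies $|T_{i^*}| < \tfrac{3\epsilon n}{2}$, placing $|T_{i^*}|$ in the forbidden range and producing a contradiction. Hence at termination $|T| < \epsilon^2 n$, and combining with $|V(L_1)| = (1+o(1))y(\epsilon)n$ and the asymptotic $y(\epsilon) = 2\epsilon - O(\epsilon^2)$ gives $|V(H)| \ge \frac{3\epsilon n}{2}$ for $\epsilon$ sufficiently small and $n$ large. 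The main conceptual obstacle is the telescoping step in the second paragraph, which converts the purely \emph{local} bad-expansion recorded at each deletion into a \emph{global} bound on $|N_{G_p}(T)|$ that can be contrasted with Theorem \ref{th: expansion}\ref{i:large} inside $L_1$; the remaining work is elementary numerics to ensure $|T|$ cannot leapfrog over the range on which Theorem \ref{th: expansion}\ref{i:large} is available.
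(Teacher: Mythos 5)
Your proposal is correct and rests on the same key ideas as the paper's proof, namely the sculpting heuristic from \cite{K19}: repeatedly excise poorly-expanding pieces, use a subadditivity/telescoping observation to show that the union of excised pieces is itself poorly-expanding, and then invoke Theorem~\ref{th: expansion}\ref{i:large} to bound the total amount removed. Your telescoping argument (any external neighbour of $T$ was present at the moment the relevant $S_j$ was deleted, hence appears in $N_{H_j}(S_j)$) is sound, and your leapfrogging check correctly uses $|S_{i^*-1}| \le |V(H_{i^*-1})|/2 \le |V(L_1)|/2 = (1+o(1))\epsilon n$ so that $|T_i|$ cannot jump past the window $[\epsilon^2 n, \tfrac{3\epsilon n}{2}]$ in which the contradiction is available.

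The paper's formulation is slightly tidier: instead of iterating, it picks a single maximal $M \subseteq V(L_1)$ subject to the two constraints $|M|\le \frac{\epsilon n}{10}$ and $|N_{G_p}(M)|\le \frac{c|M|}{d\ln d}$, sets $H = L_1 - M$, and then shows directly that any hypothetical bad set $B\subseteq V(H)$ would make $M\cup B$ violate maximality while landing in the forbidden size range. The explicit cap $|M|\le\frac{\epsilon n}{10}$ replaces your leapfrogging analysis, since $|M\cup B|\le |M| + |V(H)|/2$ is then automatically bounded by $\frac{|V(L_1)|}{2}+\frac{\epsilon n}{20}<\frac{3\epsilon n}{2}$. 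Both routes use the same ingredients and yield the same bound on $|V(H)|$; your version traces the process step-by-step, which makes the telescoping more transparent, while the paper's maximal-set phrasing compresses the bookkeeping.
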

\begin{remark}
    The fraction $\frac{3}{2}$ in Theorem \ref{th: expander} can be replaced by any constant strictly smaller than $2$. In particular, since \textbf{whp} $|V(L_1)| = \left(2\epsilon - O(\epsilon^2)\right)n$, we can choose an $H$ which covers almost all of the vertices of $L_1$.
\end{remark}
We note that, in the case of the hypercube, as shown in \cite[Claim 5.2]{EKK22}, \textbf{whp} every linear-sized subgraph of the giant component in a supercritical $Q^d_p$ has edge-expansion $O\left(\frac{1}{d}\right)$, and thus Theorem \ref{th: expander} is optimal up to the logarithmic factor in $d$. In the case of $G(n,p)$, Benjamini, Kozma and Wormald \cite{BKW14}, and Krivelevich \cite{K18} showed that in the supercritical regime there is typically a linear-sized subgraph $H$ of the giant component with a constant edge- and vertex-expansion (see also \cite{DLP14}). This result and the accompanying structural description of the giant component in terms of this expanding subgraph given by Benjamini, Kozma and Wormald \cite{BKW14}, can be used to determine the asymptotic order of many important structural parameters of the giant component in $G(n,p)$. An analogous description of the structure of the giant component in a percolated high-dimensional product graph is likely to be useful for determining its finer structure. 

Using Theorems \ref{th: connected expansion} and \ref{th: expansion}\ref{i:large}, we can obtain several interesting consequences on the typical structure of $L_1$. 
\begin{theorem}\label{th: consequences}
    Let $C>1$ be an integer. For all $j\in [t]$, let $G^{(j)}$ be a $d_j$-regular connected graph with $1<|V(G^{(j)})|\le C$. Let $G=\square_{j=1}^tG^{(j)}$, let $n\coloneqq|V(G)|$ and let $d \coloneqq \sum_{j=1}^t d_j$. Let $\epsilon>0$ be a small enough constant, let $p=\frac{1+\epsilon}{d}$, and let $L_1$ be the largest component of $G_p$. Then \textbf{whp}, 
    \begin{enumerate}[(a)]
        \item\label{i: diameter} the diameter of $L_1$ is $O(d\ln^2 d)$; 
        \item\label{i: mixing time} the mixing time of a lazy random walk on $L_1$ is $O(d^2\ln^2d)$; 
        \item\label{i: cycle} the circumference of $L_1$ is $\Omega\left(\frac{n}{d \ln d}\right)$.
    \end{enumerate}
\end{theorem}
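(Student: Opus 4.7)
The plan is to use the isoperimetric estimates of Theorems \ref{th: expansion}, \ref{th: connected expansion} and \ref{th: expander} to control the BFS balls and the random walk on $L_1$. A preliminary ingredient used throughout is the estimate $\Delta(G_p) = O(d/\ln d)$ \textbf{whp}, which follows from a Chernoff bound on $\mathrm{Bin}(d,p)$ and a union bound over the $n$ vertices, using that $\ln n = \Theta(d)$.

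For \ref{i: cycle}, I will first apply Theorem \ref{th: expander} to obtain a subgraph $H\subseteq L_1$ with $|V(H)| \ge 3\epsilon n/2$ and $|N_H(S)| \ge c|S|/(d\ln d)$ for every $S \subseteq V(H)$ with $|S|\le |V(H)|/2$. I will then invoke the classical fact, descending from P\'osa's rotation--extension technique and made quantitative in Krivelevich's work on long paths and cycles in expanders, that a connected graph on $m$ vertices in which every subset of size at most $m/2$ has vertex expansion at least $\alpha$ contains a cycle of length $\Omega(\alpha m)$. Applied with $m=\Theta(\epsilon n)$ and $\alpha=c/(d\ln d)$, this yields a cycle of length $\Omega(n/(d\ln d))$ inside $L_1$.

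For \ref{i: mixing time}, the naive Cheeger bound using only Theorem \ref{th: expansion}\ref{i:all} gives $t_{\mathrm{mix}} = O(d^3\ln^2 d)$, weaker than claimed by a factor of $d$. To save this factor I will apply a weighted Cheeger / Morris--Peres type bound of the form $t_{\mathrm{mix}} \lesssim \int du/(u\,\phi(u)^2)$, and argue that for sets of stationary $\pi$-mass $u$ one has $\phi(u)\gtrsim \ln(1/u)/(d\ln d)$. The key input is Theorem \ref{th: connected expansion}\ref{i:medium}, combined with a component-decomposition step to pass from connected to arbitrary sets (small components being handled via Theorems \ref{th: connected expansion}\ref{i:small} and \ref{th: expansion}\ref{i:all}). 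After the change of variable $v=\ln(1/u)$, the integral reduces to $(d\ln d)^2 \int dv/v^2 = O(d^2\ln^2 d)$, as claimed.

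For \ref{i: diameter}, I fix $v\in L_1$ and analyse the BFS ball $B_r=B_r(v)$ in three phases. First, while $|B_r|\le c_1 d$ for an appropriate constant $c_1$, connectivity of $L_1$ alone forces $|B_{r+1}|\ge|B_r|+1$, so this phase completes in $O(d)$ steps. Second, while $c_1 d\le|B_r|\le n^{\epsilon^5}$, Theorem \ref{th: connected expansion}\ref{i:small} yields constant vertex-expansion and $O(\log n)=O(d)$ steps suffice. Third, for $n^{\epsilon^5}\le|B_r|\le (1-o(1))|V(L_1)|$, I invoke Theorem \ref{th: connected expansion}\ref{i:medium}: setting $u_r:=\ln(n/|B_r|)$, the resulting vertex-expansion gives a recursion $u_{r+1}\le u_r(1-\Theta(1/(d\ln d)))$, and decreasing from $u_0=\Theta(d)$ to $u=\Theta(1)$ costs $O(d\ln d\cdot \ln d)=O(d\ln^2 d)$ steps. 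Summing the three phases yields diameter $O(d\ln^2 d)$. The principal obstacle will be this third phase: to realise the indicated vertex-expansion rate I must show that $|N_{G_p}(B_r)|$ is a constant fraction of $|\partial_{G_p}(B_r)|$ --- equivalently, that the average back-degree from $N_G(B_r)$ into $B_r$ in $G_p$ is $O(1)$ rather than the worst-case $O(d/\ln d)$. Since the per-vertex expectation is $O(1)$ (as $dp=O(1)$), I expect this can be upgraded to a uniform bound via a careful first-moment argument over a two-round exposure / sprinkling of the edges of $G_p$.
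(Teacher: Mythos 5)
Your overall strategy is in the right direction --- vertex expansion plus a P\'osa/Krivelevich-type theorem for the long cycle, conductance for the mixing time, and ball growth for the diameter --- but parts (a) and (b) contain genuine gaps, and in both cases the paper's actual route is built around a density lemma for connected subsets of $G_p$ (Lemma~\ref{l: edges incident to subsets}: \textbf{whp} every connected $S\subseteq V(L_1)$ satisfies $e(S)+e(S,S^C)\le \max\{10|S|,20\ln C\cdot d\}$) and around the Fountoulakis--Reed mixing-time bound, both of which you are missing.

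For \ref{i: cycle} you are essentially correct, though the paper does not route through Theorem~\ref{th: expander}; it applies Theorem~\ref{th: krivelevich cycle} directly to $L_1$, which only needs expansion of sets of size in $[\tfrac{3\epsilon n}{4},\tfrac{3\epsilon n}{2}]$, and this is exactly what Theorem~\ref{th: expansion}\ref{i:large} supplies. Your version via the expander subgraph also works but requires the extra step of extracting $H$.

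For \ref{i: mixing time}, the gap is substantive. You propose a Lov\'asz--Kannan/Morris--Peres style average-conductance bound over \emph{all} sets, repaired for disconnected $S$ by a component decomposition. But $G_p$ \textbf{whp} contains disjoint families of bare paths of length $\Theta(d)$ (each just below the threshold $\approx\tfrac{9\ln C\cdot d}{\epsilon^2}$ at which Theorem~\ref{th: connected expansion}\ref{i:small} kicks in), with total $\pi$-mass as large as $\Theta(1)$. A union $S$ of such paths has $e(S,S^C)/(2e(S)+e(S,S^C))=\Theta(1/d)$, so for a wide range of $u$ you only get $\phi(u)=\Omega(1/d)$, which after integration yields $O(d^2\log\pi_{\min}^{-1})=O(d^3)$ --- exactly the bound you were trying to beat. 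The paper evades this by invoking Theorem~\ref{th: mixing-time-tool} (Fountoulakis--Reed), which bounds $t_{\mathrm{mix}}$ using the conductance profile $\Phi(\rho)$ restricted to \emph{connected} sets. Together with Lemma~\ref{l: pi(S) and |S|} (connected sets of non-negligible $\pi$-mass have $|S|\gtrsim d$) and Lemma~\ref{l: edges incident to subsets}, this reduces the whole estimate to the ranges covered by Theorem~\ref{th: connected expansion}, giving $\Phi(2^{-j})\ge cj/(d\ln d)$ and hence $\sum_j\Phi^{-2}(2^{-j})=O(d^2\ln^2 d)$. Without a connected-set version of the conductance bound, your route does not recover this.

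For \ref{i: diameter}, you have correctly located the obstacle --- Theorem~\ref{th: connected expansion}\ref{i:medium} bounds the edge-boundary, and naively dividing by $\Delta(G_p)=O(d/\ln d)$ only gives $|B_{r+1}|\ge|B_r|(1+\Theta(\ln(n/|B_r|)/d^2))$ and hence $O(d^2\ln d)$ steps. Your proposed fix (controlling the average back-degree from $N_G(B_r)$ into $B_r$ via a sprinkling/first-moment argument) is not carried out, and it would be delicate because $B_r$ is an adapted random set. The paper's actual trick is cleaner and you have not found it: track $e(B(v,r))$ rather than $|B(v,r)|$. The identity $e(B(v,r+1))\ge e(B(v,r))+|\partial_{G_p}(B(v,r))|$ turns the edge-expansion bound directly into a multiplicative recursion on $e(B(v,r))$, and Lemma~\ref{l: edges incident to subsets} guarantees $|B(v,r)|\le e(B(v,r))\le 10|B(v,r)|$ once the ball is moderately large, so the vertex count and edge count are interchangeable. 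After this substitution your phase-three calculation (the dyadic decomposition of $\ln(n/e(B_r))$) goes through and yields $O(d\ln^2 d)$ as claimed, so the missing ingredient is precisely the density lemma, not the recursion.
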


The bounds given in Theorem \ref{th: consequences}\ref{i: diameter}, \ref{i: mixing time}, and \ref{i: cycle} are close to optimal, up to a multiplicative factor of $\ln^2d$ in the first two cases and of $d \ln d$ in the latter case. In the case of Theorem \ref{th: consequences}\ref{i: cycle} this is immediate, and in the other two cases, this follows from the likely existence in $G_p$ of a bare path of length $\Omega(d)$. 
Furthermore, note that these typical bounds not only generalise but also improve substantially upon the typical bounds in $Q^d_p$ given in \cite{EKK22}.

The structure of the paper is as follows. In Section \ref{outline}, we provide an outline of the proofs of our main results, stressing also the main challenges one needs to overcome, our approach towards them and the key novelties of this paper. In Section \ref{lemmas}, we present and establish several lemmas that will be useful for us throughout the paper. In Section \ref{iso}, we prove Theorems \ref{th: iso 1} and \ref{th: iso 2} (the reader who is interested in the implications of our results for the hypercube can recall Harper’s inequality: $i_{k}(Q^d)\ge d-\log_2k$, think of our base graphs as $K_2$, and skip Section \ref{iso}). In Section \ref{expansion} we prove Theorems \ref{th: expansion}, \ref{th: connected expansion} and \ref{th: expander}. In Section \ref{s: consequence} we prove Theorems \ref{th: consequences}\ref{i: cycle}, \ref{i: diameter} and \ref{i: mixing time}. Finally, in Section \ref{discussion} we mention some questions and open problems.

\section{Outline of the proofs}\label{outline}
For the proof of Theorem \ref{th: iso 1}, since we assume that the graph is regular, it suffices to bound from above the density of any set of a given size. To that end, we can use the product structure of $G$ to decompose it into \textit{disjoint projections of lower dimension}. Then, given a subset $S\subseteq V(G)$, this decomposition of $G$ induces a partition of $S$, and we can express the density of $S$ as a function of the density inside each partition class and the density between the partition classes. Since each partition class lives in a lower dimensional projection, we can bound its density inductively. However, whilst our desired bound is subadditive, we require a stronger inequality (Corollary \ref{c: iso 1}) to account for the cross-partition density, which we prove using a novel entropic argument (Lemma \ref{l: iso 1}).

The proof of Theorem \ref{th: iso 2} also utilises the entropy function. More explicitly, given a subset $S \subseteq V(G)$, we consider a uniformly chosen random vertex in $S$, which we can consider as a random vector in the product space $V(G)$. It can be shown that the entropy of suitable projections of this random vector can be bounded in terms of the edge boundary of $S$ in a fixed direction. We can then combine these individual bounds into a bound for $\partial(S)$ in terms of $|S|$ using Shearer's inequality.

Moving to our results on typical expansion properties of the giant component $L_1$, for small enough sets, we can combine our almost tight isoperimetric inequality (Theorem \ref{th: iso 1}) with good bounds on the number of connected subsets of $G$ (Lemma \ref{l: trees}) to argue via a first-moment calculation that it is unlikely that any small connected subset of $L_1$ does not expand well. This allows one to derive Theorem \ref{th: connected expansion}\ref{i:small}. In the proof of Theorem \ref{th: connected expansion}\ref{i:small} there is a trade-off, in (\ref{e:balance}), between the \emph{enumerative bound} of the number of connected sets of size $k$, and the \emph{probability bound} that these sets have small expansion, which is related to the isoperimetric inequality. For larger sets, the strategy of Theorem \ref{th: connected expansion}\ref{i:small} is ineffective because of the limitations of the isoperimetric inequality, leading to a weaker probability bound, and for disconnected sets the strategy is ineffective due to a weaker enumerative bound, as there are many more disconnected sets than connected sets. 

Thus, our key improvements come in the proof of Theorem \ref{th: connected expansion}\ref{i:large} and Theorem \ref{th: expansion}, and therein lie several novel techniques, embedded in two key lemmas: Lemma \ref{l: key lemma} and Lemma \ref{l: key lemma 2}. We argue via a \emph{two-round exposure}. Setting $\delta=\delta(\epsilon)\ll \epsilon$ we define $p_2=\frac{\delta}{d}$ and let $p_1$ be such that $(1-p_1)(1-p_2)=1-p$, so that $G_p$ has the same distribution as $G_{p_1}\cup G_{p_2}$, noting that $p_1\ge \frac{1+\epsilon-\delta}{d}$. By Theorem \ref{th: dekk22}, we know that \textbf{whp} $G_{p_1}$ already contains a giant component of linear order, which we denote by $L_1'$. Furthermore, note that \textbf{whp}, $L_1'$ will be a subgraph of $L_1$, the giant component of $G_p$ (in fact, typically it will cover most of the vertices of $L_1$). We thus informally refer to $L_1'$ as the \textit{early giant}.

Key ideas of \cite{EKK22}, which we generalise to the setting of high-dimensional product graphs, use an isoperimetric inequality (Theorem \ref{th: iso 2}) to give a strong probability bound for the event that a given subset of $L_1'$ does not expand well after sprinkling (see Lemma \ref{l: disjoint paths} and \cite[Lemma 3.4]{EKK22}). However, a naive enumerative bound on the number of subsets of $L'_1$ is too weak to conclude that \textbf{whp} \textit{every} subset of $L_1'$ expands well, using a union bound. 

An essential contribution here is then a novel double counting argument to improve this enumerative bound. Indeed, we only need to demonstrate an expansion property for the subsets of $L_1'$ which do not already expand inside $L_1'$, where the required expansion factor is $o_d(1)$. In particular, for each such set $S$ the size of its boundary $B$ in $L_1'$ is significantly smaller than the size of $S$, and so naively, enumerating over the set of possible boundaries should be more effective than enumerating over the sets themselves. Of course, there may be many sets $S$ with the same boundary, but we will see that again the assumption that $S$ does not expand well will allow us to give an effective bound on the number of relevant $S$ with boundary $B$ (see Lemma \ref{l: key lemma}).

Naturally, the subsets we consider can contain many vertices from the residue $L_1-L_1'$, and thus showing good expansion of subsets of the early giant $L_1'$ in $L_1$ does not immediately imply good expansion in $L_1$. Our second key contribution then lies in the analysis of the typical structure of subsets in the residue, and in particular their likely expansion into the early giant (see Lemmas \ref{l: Kd} and \ref{l: key lemma 2}). Having all these tools at hand, we prove Theorems \ref{th: expansion} and \ref{th: connected expansion}. 

The proof of Theorem \ref{th: expander} uses ideas from \cite{K19} to move from expansion at a fixed scale to expansion at all scales, together with our expansion result on large sets (Theorem \ref{th: expansion}\ref{i:large}). Having found a large expander subgraph, one can then derive the existence of a long cycle (Theorem \ref{th: consequences}\ref{i: cycle}) using techniques from \cite{K19a}. For Theorem \ref{th: consequences}\ref{i: diameter}, we analyse the growth rate of a ball of given radius. To obtain tight results, we use the edge-expansion of connected sets given in Theorem \ref{th: connected expansion}, together with the fact that, typically, connected subsets of the random subgraph $G_p$ are not dense, and thus edge-expansion is tightly connected to vertex expansion (see \cite{KRS15} for similar ideas). Finally, Theorem \ref{th: consequences}\ref{i: mixing time} follows from a careful analysis of the method of Fountoulakis and Reed together with our results on the expansion of connected sets (see \cite{FR08, KRS15, DK21B} for somewhat similar implementations).

\section{Preliminary lemmas}\label{lemmas}
We will use the following standard Chernoff type bound on the tail probabilities of the binomial distribution (see, for example, Appendix A in \cite{AS16}):
\begin{lemma}\label{l: Chernoff}
Let $N\in \mathbb{N}$, let $p\in [0,1]$, and let $X\sim Bin(N,p)$. Then for any $b>0$, 
\begin{align*}
    \mathbb{P}\left(X\ge bNp\right)\le \left(\frac{e}{b}\right)^{bNp}.
\end{align*}
\end{lemma}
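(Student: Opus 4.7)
The plan is to follow the classical Chernoff--Cram\'er exponential moment argument. First, for any $t>0$, Markov's inequality applied to the random variable $e^{tX}$ yields
\begin{align*}
\mathbb{P}(X \ge bNp) \;=\; \mathbb{P}\left(e^{tX} \ge e^{tbNp}\right) \;\le\; e^{-tbNp}\,\mathbb{E}[e^{tX}].
\end{align*}
Writing $X = \sum_{i=1}^N X_i$ as a sum of independent Bernoulli$(p)$ variables, independence factorises the moment generating function as $\mathbb{E}[e^{tX}] = (1-p+pe^t)^N$, and using the elementary inequality $1+x \le e^x$ with $x = p(e^t-1)$ gives $\mathbb{E}[e^{tX}] \le e^{Np(e^t-1)}$.

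Plugging this back in, I get the one-parameter family of bounds
\begin{align*}
\mathbb{P}(X \ge bNp) \;\le\; \exp\bigl(Np(e^t-1) - tbNp\bigr),
\end{align*}
valid for every $t>0$. The next step is to optimise in $t$: differentiating the exponent gives $Npe^t - bNp = 0$, so the optimal choice is $t = \ln b$, which is positive precisely when $b > 1$. Substituting yields
\begin{align*}
\mathbb{P}(X \ge bNp) \;\le\; \exp\bigl(Np(b-1) - bNp\ln b\bigr) \;=\; b^{-bNp}\, e^{Np(b-1)}.
\end{align*}

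Finally, to obtain the clean form stated in the lemma, I would use the trivial bound $e^{Np(b-1)} \le e^{bNp}$ (since $b-1 \le b$), giving $\mathbb{P}(X \ge bNp) \le \left(e/b\right)^{bNp}$ as required. To handle the remaining range $0 < b \le 1$, I would simply note that then $e/b \ge e > 1$ and $bNp \ge 0$, so the right-hand side is at least $1$ and the inequality holds trivially.

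No step here is really an obstacle; the only minor subtlety is the case split at $b = 1$, where the optimisation in $t$ breaks down but the conclusion is free.
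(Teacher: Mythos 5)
Your proof is correct. The paper states this lemma without proof, citing Appendix~A of Alon and Spencer, and the argument you give — Markov applied to $e^{tX}$, the bound $\mathbb{E}[e^{tX}]\le e^{Np(e^t-1)}$, optimisation at $t=\ln b$, the relaxation $e^{Np(b-1)}\le e^{bNp}$, and the trivial case $0<b\le 1$ — is exactly the standard Chernoff derivation that reference has in mind, so there is nothing to flag.
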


We will also use the well-known Azuma-Hoeffding inequality (see, for example, Chapter 7 in \cite{AS16}),
\begin{lemma}\label{l: AH}
    Let $X = (X_1,X_2,\ldots, X_m)$ be a random vector with range $\Lambda = \prod_{i \in [m]} \Lambda_i$ and let $f:\Lambda\to\mathbb{R}$ be such that there exists $D \in \mathbb{R}_+$ such that for every $x,x' \in \Lambda$ which differ only in the $j$-th coordinate,
    \begin{align*}
        |f(x)-f(x')|\le D.
    \end{align*}
    Then, for every $b\ge 0$,
    \begin{align*}
        \mathbb{P}\left[\big|f(X)-\mathbb{E}\left[f(X)\right]\big|\ge b\right]\le 2\exp\left(-\frac{b^2}{2mD^2}\right).
    \end{align*}
\end{lemma}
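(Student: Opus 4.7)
The plan is to apply the standard method of bounded differences via a Doob martingale. First I would set $Y_k = \mathbb{E}[f(X) \mid X_1, \ldots, X_k]$ for $0 \le k \le m$, so that $Y_0 = \mathbb{E}[f(X)]$ and $Y_m = f(X)$. This is a martingale with respect to the filtration generated by $(X_1, \ldots, X_k)$, and the telescoping identity $f(X) - \mathbb{E}[f(X)] = \sum_{k=1}^m (Y_k - Y_{k-1})$ decomposes the quantity we want to control into martingale differences.

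Next, I would establish the bounded-differences property $|Y_k - Y_{k-1}| \le D$ almost surely. Because the coordinates $X_1, \ldots, X_m$ are independent, conditioning on $(X_1, \ldots, X_k)$ amounts to integrating $f$ against the product measure on $(X_{k+1}, \ldots, X_m)$; hence both $Y_k$ and $Y_{k-1}$ can be written as averages of $f$ over the same distribution on the tail coordinates, the only difference being that $Y_{k-1}$ additionally averages over $X_k$. Writing $Y_k - Y_{k-1}$ as an average over $X_k, X_k'$ of expressions of the form $f(x_1, \ldots, x_{k-1}, X_k, X_{k+1}, \ldots, X_m) - f(x_1, \ldots, x_{k-1}, X_k', X_{k+1}, \ldots, X_m)$ and invoking the hypothesis that $f$ changes by at most $D$ under a single-coordinate swap yields the pointwise bound $D$.

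Then I would control the exponential moments by Hoeffding's lemma: if $Z$ is a random variable with $\mathbb{E}[Z \mid \mathcal{F}] = 0$ and $|Z| \le D$, then $\mathbb{E}[e^{\lambda Z} \mid \mathcal{F}] \le \exp(\lambda^2 D^2 / 2)$. Applying this iteratively to the martingale differences and using the tower property gives
\[
\mathbb{E}\bigl[\exp\bigl(\lambda (f(X) - \mathbb{E}[f(X)])\bigr)\bigr] \le \exp\!\left(\frac{m \lambda^2 D^2}{2}\right).
\]
An exponential Markov inequality followed by optimising $\lambda = b/(mD^2)$ produces the one-sided tail $\mathbb{P}[f(X) - \mathbb{E}[f(X)] \ge b] \le \exp(-b^2/(2mD^2))$. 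Running the same argument with $-f$ in place of $f$ handles the lower tail, and a union bound over the two tails yields the two-sided inequality with the extra factor of $2$, as stated.

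The main technical point is verifying the bounded-differences step $|Y_k - Y_{k-1}| \le D$ rigorously; while intuitively immediate from the Lipschitz hypothesis, doing it cleanly requires exploiting independence to express the conditional expectation as integration against a product measure and then coupling the two averages so that the single-coordinate Lipschitz bound can be pulled out of the expectation. Once this is in place, the remainder of the argument is a routine combination of Hoeffding's lemma with Chernoff's exponential tilting, so no further obstacles arise.
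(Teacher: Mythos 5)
The paper does not supply its own proof of this lemma; it simply cites it as the Azuma--Hoeffding inequality and refers the reader to Alon--Spencer, Chapter~7. Your proposal is the standard textbook proof (Doob martingale $Y_k = \mathbb{E}[f(X) \mid X_1,\ldots,X_k]$, bounded differences, Hoeffding's lemma, Chernoff optimisation), and it is correct. In particular, the martingale-difference bound $|Y_k - Y_{k-1}| \le D$ together with Hoeffding's lemma in the form $\mathbb{E}[e^{\lambda Z}] \le \exp(\lambda^2 D^2/2)$ yields exactly the constant $1/(2mD^2)$ in the exponent, matching the statement (this is the slightly looser Azuma-style constant rather than the sharper McDiarmid constant $2/(mD^2)$, which would require arguing that each increment is supported in an interval of length $D$ rather than merely $2D$).

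One point you handle correctly but which deserves emphasis: your argument uses independence of the coordinates $X_1,\ldots,X_m$ in an essential way when you rewrite $Y_k - Y_{k-1}$ as an average over a resampled $X_k$ against the product measure on the tail coordinates. The lemma as stated in the paper does not say the coordinates are independent, but this is an implicit assumption (it is how the inequality is used in the paper, with independent edge indicators), and without it the bounded-differences step would fail. You flagged this dependence on independence explicitly, which is the right instinct; a fully careful write-up should add the word \emph{independent} to the hypotheses.
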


We require the following bound on the number of $k$-vertex trees in a $d$-regular graph
$G$, which follows immediately from Lemma 2 in \cite{BFM98}.
\begin{lemma}\label{l: trees}
Let $k \in \mathbb{N}$ and let $t_k(G)$ be the number of trees on $k$ vertices which are subgraphs of an $n$-vertex $d$-regular graph $G$. Then
\begin{align*}
     t_k(G)\le n(ed)^{k-1}.
\end{align*}
\end{lemma}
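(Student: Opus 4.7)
The plan is to encode each $k$-vertex subtree of $G$ by a short sequence of local choices, exploiting both the $d$-regularity of $G$ and the connectedness of the subtree. I would first select a root vertex $v$ of the subtree, contributing the factor of $n$ in the final bound, and then reconstruct the tree starting from $v$ by means of a canonical traversal that uses only local neighbourhood information at each step.

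A first natural implementation is a depth-first-search traversal in which, at each vertex, the children are listed in a canonical order (e.g.\ lexicographic with respect to some fixed labelling of $V(G)$). This produces a walk along the edges of the tree of length exactly $2(k-1)$, with $k-1$ \emph{down} steps going to a new child and $k-1$ \emph{up} steps backtracking along the parent edge. The shape of the walk (which of the $2(k-1)$ positions are down steps) is one of the Catalan-type patterns, numbering at most $\binom{2(k-1)}{k-1}\le 4^{k-1}$, and at each down step there are at most $d$ choices for the new neighbour, yielding at most $n\cdot 4^{k-1}\cdot d^{k-1}$ traversals; each subtree is recovered at least once by this encoding. This already gives a bound of the form $n(Cd)^{k-1}$, but with a constant $C$ larger than $e$.

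To get the sharp constant $e$ stated in the lemma, I would instead use a \emph{growth-sequence} encoding: order the $k-1$ non-root vertices as $v_2,\dots,v_k$ so that each $v_i$ is adjacent in the tree to some earlier $v_j$. Each such sequence is obtained by picking, at step $i$, an edge of $G$ from the current partial tree to a new vertex, of which there are at most $(i-1)d$ choices, giving $n\cdot(k-1)!\cdot d^{k-1}$ sequences in total. Dividing by a lower bound on the number of growth orderings of a fixed tree yields the factor $(ed)^{k-1}$ after Stirling; this is essentially the bookkeeping done in Lemma~2 of \cite{BFM98}, which is what the statement here invokes directly.

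The main (and really the only) obstacle is nailing down the sharp constant $e$ rather than the naive $4$ produced by the crude DFS encoding; once an encoding bound of this form is in place, the rest is immediate from the observation that every $k$-vertex subtree of $G$ arises in the enumeration and hence $t_k(G)\le n(ed)^{k-1}$.
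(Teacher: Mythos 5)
The paper offers no proof here beyond citing Lemma~2 of \cite{BFM98}, so what matters is whether your sketch correctly reproduces that argument. Your first (DFS/Catalan) encoding is sound and gives $t_k(G)\le n\binom{2(k-1)}{k-1}d^{k-1}\le n(4d)^{k-1}$, a valid but weaker bound, as you say. The problem is with the growth-sequence refinement. You bound the number of (root, growth ordering) pairs by $n\cdot(k-1)!\cdot d^{k-1}$ and then want to divide by a lower bound on the number of such pairs per fixed tree, claiming this lower bound is of order $(k-1)!/e^{k-1}$. It is not: for a path on $k$ vertices rooted at its $j$-th vertex, a growth ordering is a shuffle of two chains of lengths $j-1$ and $k-j$, so the total over all roots is $\sum_{j=1}^{k}\binom{k-1}{j-1}=2^{k-1}$, which for large $k$ is exponentially smaller than $(k-1)!/e^{k-1}$. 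Dividing $n(k-1)!\,d^{k-1}$ by $2^{k-1}$ gives a bound whose base grows linearly in $k$, so this route does not even recover $n(Cd)^{k-1}$ for a constant $C$, let alone $C=e$.

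The way to sharpen $4$ to $e$ is to push the DFS encoding further rather than abandon it. Fix a root $v$ and a linear order on $V(G)$; a rooted $k$-vertex subtree then has a canonical plane structure whose out-degree sequence $(d_1,\dots,d_k)$ in DFS order is a \L{}ukasiewicz path with $\sum_i d_i=k-1$, and vertex $i$ chooses its $d_i$ children as a $d_i$-subset of its at most $d$ neighbours, giving weight $\prod_i\binom{d}{d_i}$. Summing over all compositions of $k-1$ gives $[x^{k-1}](1+x)^{dk}=\binom{dk}{k-1}$, and the cycle lemma reduces the sum over \L{}ukasiewicz paths to $\frac{1}{k}\binom{dk}{k-1}$. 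Hence
\[
t_k(G)\ \le\ \frac{n}{k}\binom{dk}{k-1}\ \le\ \frac{n(dk)^{k-1}}{k!}\ \le\ n(ed)^{k-1},
\]
the last step using $k!\ge (k/e)^{k-1}$. This is the bookkeeping Lemma~2 of \cite{BFM98} encapsulates; the gap in your proposal is the division step, which replaces it with an inequality that fails already for paths.
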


In certain situations it will be useful to decompose a tree into connected parts of roughly equal size. In \cite[Lemma 2.2]{EKK22} and \cite[Proposition 4.5]{KN06}, such a result is given where the tree is decomposed into vertex-disjoint subsets, but where the gap between the sizes of the subsets grows with the maximum degree of the tree. For our purposes, we will require a similar result with tighter control over the size of the parts. To do so, we instead decompose into edge-disjoint subsets, which allows us to bound the difference in the sizes of the subsets independently of the tree.
\begin{lemma}\label{l: tree decomposition}
Let $\ell>0$ be an integer. Let $T$ be a tree with $|V(T)|\ge \ell$. Then, there exist vertex sets $A_1,\ldots A_s$ such that:
\begin{enumerate}[(a)]
    \item $V(T)=\bigcup_{1\le i \le s}A_{i}$; \label{i: partition}
    \item $T[A_i]$ is connected for all $i\in [s]$;  \label{i: connected}
    \item $|A_i\cap \left(\bigcup_{j\in([s]\setminus \{i\})}A_{j}\right)|\le 1$; and \label{i: edge-disjoint}
    \item $\ell\le |A_{i}|\le 3\ell$ for all $i\in [s]$. \label{i: sizes}
\end{enumerate}
\end{lemma}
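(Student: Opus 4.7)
The plan is to root $T$ at an arbitrary vertex $r$ and build the decomposition greedily by a post-order depth-first traversal. At each vertex $v$, I will maintain a connected ``pending'' subtree $S(v) \subseteq T_v$ rooted at $v$ of size strictly less than $\ell$, consisting of those vertices of $T_v$ whose upward edges have not yet been assigned to a completed part. To process $v$ with children $c_1, \ldots, c_k$, I initialise a working group $G := \{v\}$ and iterate over the children in arbitrary order, each time extending $G$ by $S(c_i)$ together with the edge $vc_i$. Whenever $|G|$ first reaches $\ell$, I declare $G$ to be a new part, append it to the running list $A_1, A_2, \ldots$, and reset $G := \{v\}$. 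Once all children are handled, the remaining $G$ becomes $S(v)$ and is passed up to the parent of $v$.

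The size constraint (d) on completed parts is then immediate: just before any cut-off, $|G| < \ell$, and the added $|S(c_i)| \le \ell - 1$, so $\ell \le |A_i| \le 2\ell - 1 \le 3\ell$. The cap $|S(v)| < \ell$ holds by construction, since otherwise $G$ would already have triggered a cut-off. The connectedness in (b) and the covering in (a) are built into the invariants.

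At the root, I am left with completed parts $A_1, \ldots, A_{s-1}$ and a residue $S(r)$ with $|S(r)| \le \ell - 1$. Since $|V(T)| \ge \ell > |S(r)|$, at least one part must have been finalised along the way. By the structure of the algorithm, every cut-off vertex survives in the pending subtree sent upward, so $S(r)$ shares at least one vertex $u$ with a completed part $A$. I finalise the decomposition by replacing $A$ by the merged set $A_s := A \cup S(r)$. This is connected, since $A$ and $S(r)$ are connected subtrees sharing $u$, and its size lies in $[|A|,\, |A| + |S(r)| - 1] \subseteq [\ell, 3\ell - 2]$.

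I expect the main technical point to be a careful verification of the edge-disjointness condition (c): distinct parts cut off at the same vertex $v$ intersect only in $\{v\}$; parts cut off at different vertices are disjoint in $V(T_v)$ apart from the single propagated cut-off vertex; and the final merge at the root does not create any additional pairwise overlap beyond these single-vertex intersections. Turning this local per-vertex bookkeeping into a clean global argument, together with a precise description of how $S(v)$ evolves and of which part $A$ the residue $S(r)$ is absorbed into at the root, is where most of the care will be needed when writing up the full proof.
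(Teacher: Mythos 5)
Your bottom-up greedy post-order DFS is essentially a reformulation of the paper's inductive argument, which repeatedly locates the deepest vertex $v$ with $|V(T_v)|\ge\ell$, removes $v$ together with a bundle of its children's subtrees, and recurses; properties (a), (b) and (d) are handled correctly and in much the same way.

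The gap you yourself flag at the overlap condition (c) is, however, genuine, and the argument you outline does not close it. Condition (c) requires each $A_i$ to meet the \emph{union} of all the other parts in at most one vertex, whereas your sketched reasoning (distinct parts cut at the same vertex overlap only in that vertex; parts cut at different vertices overlap only in the one propagated cut-off vertex) controls only \emph{pairwise} overlaps, which is strictly weaker. In fact, your algorithm can violate (c). Take $\ell=3$ and a vertex $d$ with two children $e$ and $d'$, where $e$ has leaf-children $f_1,f_2$ and $d'$ has leaf-children $g_1,g_2$. Your procedure cuts off $A_1=\{e,f_1,f_2\}$ at $e$ and passes $S(e)=\{e\}$ upward, then cuts off $A_2=\{d',g_1,g_2\}$ at $d'$ and passes $S(d')=\{d'\}$ upward; at $d$ the working group reaches $\{d,e,d'\}$ of size $\ell$ and is cut off as $A_3$. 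Then $A_3\cap A_1=\{e\}$ and $A_3\cap A_2=\{d'\}$, so $|A_3\cap(A_1\cup A_2)|=2$. The root cause is that the working group at a vertex $v$ may absorb the pending subtrees $S(c_i)$ of \emph{several} children $c_i$ which are themselves earlier cut-off vertices, each contributing a distinct vertex shared with a previous part; the final merge of $S(r)$ into a completed part can only make this worse. A correct argument needs a mechanism guaranteeing that each completed part inherits at most one previously-shared vertex, and the greedy group-formation rule as stated does not provide this.
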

\begin{proof}
We prove the result by induction on $m=|V(T)|$. If $\ell\le m\le 3\ell$, the trivial partition $A_1=V(T)$ satisfies the conclusion of the lemma. Suppose then that $m>3\ell$, and that the statement holds for all trees $T'$ where $\ell \le |V(T')|<m$. Let us choose an arbitrary root $w\in V(T)$ for $T$. For each $v \in V(T)$, we write $T_v$ for the subtree of $T$ rooted at $v$. 

Let $v$ be a vertex of maximal distance from $w$ such that $|V(T_{v})|\ge\ell$. Note that by our choice of $v$, $|V(T_x)|<\ell$ for every child $x$ of $v$. Then, there exists a subset of the children of $v$, $X_0\subseteq V(T)$, such that $\ell-1\le \sum_{x\in X_0}|V(T_x)|\le 2\ell -2$. Set $A_1=\{v\}\cup\bigcup_{x\in X_0}V(T_{x})$, and note that $\ell \le |A_1| \le 2\ell -1$ and that $T[A_1]$ is connected. Set $T'=T\setminus\bigcup_{x\in X_0}V(T_x)$, and note that $T'$ is connected with $|T|>|T'|\ge |T|-(2\ell-2)\ge \ell$. We may thus apply the induction hypothesis to $T'$, producing $A_2,\ldots, A_s$ satisfying properties \ref{i: partition} through \ref{i: sizes} with respect to $T'$.

Consider the sets $A_1, A_2, \ldots, A_s$ with respect to $T$. Properties \ref{i: partition}, \ref{i: connected} and \ref{i: sizes} are clear from the above construction. Since $V(T) \cap V(T') = \{v\}$, $v$ is the only vertex that can be shared by $A_1$ and any $A_j$ with $j>1$, and so property \ref{i: edge-disjoint} is satisfied as well.
\end{proof}

The following theorem will allow us to deduce the existence of a long \textit{cycle} in a graph with good vertex-expansion.
\begin{thm}\cite[Theorem 1]{K19a}\label{th: krivelevich cycle}
Let $a\ge 1, b\ge 2$ be integers. Let $G$ be a graph on more than $a$ vertices satisfying
\begin{align*}
    |N(S)|\ge b, \quad \text{for every } S\subseteq V(G) \text{ with } \frac{a}{2}\le|S|\le a.
\end{align*}
Then $G$ contains a cycle of length at least $b+1$.
\end{thm}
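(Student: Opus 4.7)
The plan is to argue by contradiction: assume $G$ satisfies the hypothesis but contains no cycle of length at least $b+1$, and exhibit a set $S$ with $|S| \in [a/2, a]$ satisfying $|N(S)| \leq b-1$, which contradicts the expansion assumption.

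My approach is based on depth-first search. Run DFS on $G$ from an arbitrary root to obtain a rooted spanning forest $T$. The structural property I would exploit is that in the DFS forest of an undirected graph every non-tree edge is a back-edge, joining a vertex to one of its ancestors, and in particular there are no cross-edges between different subtrees. Consequently, for every vertex $x$, the set $V(T_x)$ of its descendants has all of its external neighbours among the strict ancestors of $x$.

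Next, I would locate a vertex whose subtree has the right size. Since $|V(T_r)| = |V(G)| > a$ for the root $r$, one can descend the tree, always choosing a child whose subtree still has at least $a/2$ vertices, until arriving at a vertex $v$ with $|V(T_v)| \geq a/2$ but every child $v'$ of $v$ satisfying $|V(T_{v'})| < a/2$. I would then build $S$ in two subcases. If $|V(T_v)| \leq a$, take $S = V(T_v)$. If $|V(T_v)| > a$, use a greedy selection: since each child's subtree has size less than $a/2$, adding them one at a time until the running sum first reaches $a/2$ yields a subset $I$ of children with $\sum_{i \in I} |V(T_{v_i'})| \in [a/2, a)$, and I set $S = \bigcup_{i \in I} V(T_{v_i'})$. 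In both subcases $|S| \in [a/2, a]$, and by the DFS property $N(S) \subseteq \{v\} \cup \{\text{strict ancestors of } v\}$, with $v$ only possibly in $N(S)$ in the second subcase.

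To finish, I would use the no-long-cycle assumption to constrain which ancestors can lie in $N(S)$. Any back-edge $uw$ with $w \in S$ and $u$ a strict ancestor of $w$ closes a cycle of length $\mathrm{depth}(w) - \mathrm{depth}(u) + 1$, which must be at most $b$, giving $\mathrm{depth}(u) \geq \mathrm{depth}(w) - b + 1$. Since every $w \in S$ has $\mathrm{depth}(w) \geq \mathrm{depth}(v)$ in the first subcase and $\mathrm{depth}(w) \geq \mathrm{depth}(v) + 1$ in the second, the strict ancestors of $v$ in $N(S)$ are confined to at most $b - 1$ consecutive depths in the first subcase and at most $b - 2$ in the second; since ancestors of $v$ form a path, there is at most one per depth, so in each case $|N(S)| \leq b - 1$, contradicting $|N(S)| \geq b$.

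The main obstacle is keeping the case distinction clean and tracking the off-by-one depth bounds so that both subcases yield the uniform estimate $|N(S)| \leq b - 1$. A secondary subtlety is checking that the greedy construction in the second subcase is always feasible, which uses the fact that the sum of the children's subtree sizes equals $|V(T_v)| - 1 > a - 1 \geq a/2$ whenever $|V(T_v)| > a$ and $a \geq 2$, so the running sum really does reach $[a/2, a)$ before exhausting the children.
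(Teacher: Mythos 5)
Your proof is correct and is essentially the same DFS-based argument Krivelevich gives in \cite{K19a}: run DFS so that all non-tree edges are back-edges, locate a set $S$ of size in $[a/2,a]$ consisting of a subtree or a union of child-subtrees, observe that $N(S)$ lies on the path of ancestors of $v$, and use the absence of cycles longer than $b$ to confine that neighbourhood to a segment of at most $b-1$ consecutive depths. One small omission worth fixing: your descent begins from $|V(T_r)|=|V(G)|>a$, which implicitly assumes $G$ is connected; if $G$ is disconnected, either some component has more than $a$ vertices (in which case you run the argument inside it), or every component has at most $a$ vertices, in which case a union of whole components with total size in $[a/2,a]$ (obtainable since each increment is at most $a$) has empty external neighbourhood, contradicting $|N(S)|\ge b\ge 2$.
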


Given a discrete random variable $X$ taking values in some range $\mathcal{X}$, the entropy of $X$ is given by
\begin{align*}
    H(X)\coloneqq \sum_{x\in\mathcal{X}}-p(x)\log_2p(x),
\end{align*}
where $p(x)\coloneqq\mathbb{P}(X=x)$ and we follow the convention that $x\log_2x=0$ for $x=0$. Given discrete random variables $X_1,X_2,\ldots, X_t$, the joint entropy $H(X_1,X_2,\ldots, X_t)$ is defined to be the entropy of the random vector $(X_1,X_2,\ldots, X_t)$. We denote by 
\[
H\left(X_1,X_2,\ldots, X_t|X_{t+1}\right)\coloneqq H\left(X_1,X_2,\ldots, X_{t+1}\right)-H\left(X_{t+1}\right)
\]
the conditional entropy of $(X_1,\ldots, X_t)$ given $X_{t+1}$. 
\begin{remark}\label{r: entropy}
    Observe that if $X_1$ determines $X_2$, then by definition
    \begin{equation}\label{eq:X1detX2}
    H(X_1|X_2)=H(X_1,X_2)-H(X_2)=H(X_1)-H(X_2).
 \end{equation} 
 Furthermore, if $X_3$ determines $X_2$, then \begin{equation}\label{eq:X3detX2}H(X_1,X_2|X_3)=H(X_1,X_2,X_3)-H(X_3)=H(X_1,X_3)-H(X_3)=H(X_1|X_3).
  \end{equation}
\end{remark}
Finally, given a random vector $X=(X_1,\ldots, X_t)$ and $I\subseteq[t]$, we denote by $X_I$ the random vector $(X_i)_{i\in I}$. 

We will require the following property of the entropy function due to Shearer (see, for example, \cite[Chapter 7]{AS16}):
\begin{lemma}[Shearer's inequality]\label{l: entropy}
Let $X_1, \ldots, X_t$ be discrete random variables and let $\mathcal{A}$ be a collection of (not necessarily distinct) subsets of $[t]$, such that each $i\in [t]$ is in at least $m$ members of $\mathcal{A}$. Then
\begin{align*}
    H(X_1,\ldots, X_t)\le\frac{1}{m}\sum_{A\in \mathcal{A}}H(X_A).
\end{align*}
\end{lemma}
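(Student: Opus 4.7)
The plan is to combine two standard facts about entropy: the chain rule, and the monotonicity property that conditioning on more information cannot increase entropy. Together these will allow me to expand both $H(X_1,\ldots,X_t)$ and each $H(X_A)$ into sums of conditional entropies of individual $X_i$, after which the inequality will fall out from counting how often each $i\in[t]$ is covered by $\mathcal{A}$.

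First I would establish the chain rule $H(X_1,\ldots,X_t) = \sum_{i=1}^{t} H(X_i \mid X_1,\ldots,X_{i-1})$, which is immediate by induction from the definition $H(Y\mid Z) = H(Y,Z) - H(Z)$ recalled in the preceding remark. The same induction yields, more generally, for any $A\subseteq [t]$ with elements $i_1 < \cdots < i_s$,
\[
H(X_A) \;=\; \sum_{k=1}^{s} H\!\left(X_{i_k} \,\big|\, X_{i_1},\ldots,X_{i_{k-1}}\right).
\]

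Next I would prove that conditioning reduces entropy: for any discrete random variables $Y, Z, W$, one has $H(Y \mid Z, W) \le H(Y \mid Z)$. This is equivalent to non-negativity of the conditional mutual information and follows from Jensen's inequality applied to the concavity of $\log_2$ (equivalently, the Gibbs inequality $\sum p_i \log_2(p_i/q_i)\ge 0$). Applied iteratively, this gives, for each $A\in\mathcal{A}$ and each $i\in A$, writing $A_{<i} := \{j\in A: j<i\}\subseteq \{1,\ldots,i-1\}$,
\[
H\!\left(X_i \,\big|\, X_{A_{<i}}\right) \;\ge\; H\!\left(X_i \,\big|\, X_1,\ldots,X_{i-1}\right),
\]
since we are conditioning on strictly less information on the left-hand side.

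Combining the two facts and summing over $\mathcal{A}$,
\[
\sum_{A\in\mathcal{A}} H(X_A) \;=\; \sum_{A\in\mathcal{A}}\sum_{i\in A} H\!\left(X_i \,\big|\, X_{A_{<i}}\right) \;\ge\; \sum_{i=1}^{t}\bigl|\{A\in\mathcal{A}: i\in A\}\bigr|\cdot H\!\left(X_i \,\big|\, X_1,\ldots,X_{i-1}\right).
\]
Since every conditional entropy is non-negative and each $i$ lies in at least $m$ members of $\mathcal{A}$, the right-hand side is at least $m\sum_{i=1}^{t} H(X_i\mid X_1,\ldots,X_{i-1}) = m\cdot H(X_1,\ldots,X_t)$ by the chain rule, and dividing by $m$ gives the claim. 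The only genuinely analytic step is the conditioning-reduces-entropy inequality; everything else is bookkeeping with the chain rule and a swap of summation order, so that step will be the main obstacle (and the one place where the convention $0\log_2 0 = 0$ must be handled carefully).
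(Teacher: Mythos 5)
Your proof is correct and is the standard textbook argument for Shearer's inequality. Note, however, that the paper does not actually prove this lemma: it states it as a known result with a citation to \cite[Chapter 7]{AS16}, so there is no in-paper proof to compare against. Your route --- expand $H(X_1,\ldots,X_t)$ and each $H(X_A)$ by the chain rule, use ``conditioning reduces entropy'' to compare $H(X_i \mid X_{A_{<i}})$ with $H(X_i \mid X_1,\ldots,X_{i-1})$, then swap the order of summation and use the covering hypothesis together with non-negativity of conditional entropy --- is exactly the classical proof (due essentially to Chung, Graham, Frankl and Shearer), and matches the treatment in the cited reference. One small point worth making explicit in a final write-up: when you pass from $\sum_i |\{A : i \in A\}| \cdot H(X_i \mid X_1,\ldots,X_{i-1})$ to $m\sum_i H(X_i \mid X_1,\ldots,X_{i-1})$, you need non-negativity of each conditional entropy term (which you do mention), since the coefficients $|\{A: i\in A\}|$ may exceed $m$; this is a one-line observation but is where the ``at least $m$'' hypothesis is consumed.
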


Throughout the rest of the paper, unless explicitly mentioned otherwise, we assume that $C>1$ and ${G=\square_{j=1}^tG^{(j)}}$ is a high-dimensional product graph, whose base graphs $G^{(j)}$ are connected and $d_j$-regular with $1<|V(G^{(j)}|\le C$. Without loss of generality we can assume that $C\coloneqq C\left(G\right)=\max_{j\in[t]}\{|V(G^{(j)})|\}.$

We follow the notation regarding product graphs as in \cite{DEKK23}. Given a product graph $G=\square_{j=1}^tG^{(j)}$, we call the $G^{(j)}$ the \textit{base graphs} of $G$. Given a vertex $u = (u_1,u_2, \ldots, u_t)$ in $V(G)$ and $j \in [t]$ we call the vertex $u_j\in V(G^{(j)})$ the \textit{$j$-th coordinate} of $G$. Whenever confusion may arise, we will clarify whether the subscript stands for the enumeration of the vertices of the set, or for their coordinates.
When $G^{(j)}$ is a graph on a single vertex, that is, $G^{(j)}=\left(\{u\},\varnothing\right)$, we call it \textit{trivial} (and \textit{non-trivial}, otherwise). We define the \textit{dimension} of $G=\square_{j=1}^tG^{(j)}$ to be the number of base graphs $G^{(j)}$ of $G$ which are non-trivial (we note that the dimension of $G$ is not an invariant of $G$, and in fact depends on the choice of the base graphs). We note that $G$ is also regular, and we write $d\coloneqq \sum_{j=1}^t d_j$, which can be seen to be the degree of $G$, and let $n\coloneqq|V(G)|$. Furthermore, we assume in what follows that $\epsilon>0$ is a small enough constant, and let $p=\frac{1+\epsilon}{d}$. We denote by $G_p$ the graph obtained by retaining every edge of $G$ independently with probability $p$. 

Given a subgraph $H\subseteq G$, we denote by $d(H)$ the average degree of the subgraph $H$. Given two subsets $A, B\subseteq V(G)$ with $A\cap B=\varnothing$, we denote by $e(A,B)$ the number of edges between $A$ and $B$. Furthermore, given a subset $A\subseteq G$, we let $e(A)\coloneqq |E(G[A])|$. Finally, given a vertex $v\in V(G)$ and a subset $A\subseteq V(G)$, we denote by $d_A(v)$ the number of neighbours of $v$ in $A$.

We close this section with two lemmas about the structure of percolated product graphs. The first one is about large matchings in a random edge-subset, and is a fairly straightforward generalisation of Lemma 2.9 in \cite{EKK22}.
\begin{lemma}\label{l: matching}
Let $G$ be a $d$-regular graph. Let $c_1>0$ and $0<\delta<1$ be constants. Let $s\ge c_1d$. Let $F\subseteq E(G)$ be such that $|F|\ge s$ and let $q=\frac{\delta}{d}$. Then, there exists a constant $c_2=c_2(c_1,\delta)$ such that $F_{q}$, a random subset of $F$ obtained by retaining each edge independently with probability $q$, contains a matching of size at least $\frac{c_2s}{d}$ with probability at least $1-\exp\left(-\frac{c_2s}{d}\right)$.
\end{lemma}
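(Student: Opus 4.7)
The plan is to combine a linearity-of-expectation bound on the expected matching number $\mu(F_q)$, obtained by counting \emph{isolated} edges, with a concentration inequality for self-bounding functions.

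First, call an edge $e\in F_q$ \emph{isolated} if no other edge of $F_q$ shares a vertex with it; the set of isolated edges automatically forms a matching, so $\mu(F_q)$ is at least this count. For any $e=uv\in F$, the number of $F$-edges adjacent to $e$ is at most $d_F(u)+d_F(v)-2\le 2d-2$ since $G$ is $d$-regular, so $e$ is isolated in $F_q$ with probability at least $q(1-q)^{2d-2}\ge qe^{-4\delta}$ for $\delta$ small enough. Summing over the (at least $s$) edges of $F$ gives $\mathbb{E}[\mu(F_q)] \ge c_3 s/d$ for some constant $c_3=c_3(\delta)>0$.

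Next I would exploit that $\mu(F_q)$, viewed as a function of the independent Bernoulli indicators $(X_e)_{e\in F}$, is a \emph{self-bounding} function: (i) removing any single edge of $F_q$ changes $\mu$ by at most $1$, and (ii) $\sum_{e\in F_q}[\mu(F_q)-\mu(F_q\setminus\{e\})]\le\mu(F_q)$, because this sum counts the edges that lie in every maximum matching of $F_q$, and such edges are contained in a single matching. The Boucheron--Lugosi--Massart concentration inequality for self-bounding functions then yields
\begin{align*}
    \mathbb{P}\bigl(\mu(F_q) \le \mathbb{E}[\mu(F_q)] - t\bigr) \le \exp\left(-\frac{t^2}{2\,\mathbb{E}[\mu(F_q)]}\right)
\end{align*}
for every $0\le t\le\mathbb{E}[\mu(F_q)]$. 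Taking $t=\mathbb{E}[\mu(F_q)]/2$ and inserting the bound from the previous step shows that $\mu(F_q)\ge c_3 s/(2d)$ with probability at least $1-\exp(-c_3 s/(16d))$, so the conclusion follows with $c_2=c_3/16$.

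The main obstacle will be obtaining the correct exponential tail $\exp(-\Omega(s/d))$. A direct Azuma--Hoeffding bound on $\mu(F_q)$ using only its $1$-Lipschitz property would give a tail of the form $\exp(-\Omega(s^2/(d^2|F|)))$, which is far too weak when $|F|$ is substantially larger than $s$ (recall that $|F|$ can be as large as $\Theta(nd)$). The self-bounding refinement replaces the worst-case variance $|F|$ with $\mathbb{E}[\mu(F_q)]=\Theta(s/d)$ in the exponent, thereby matching the natural Chernoff bound on $|F_q|$ itself and producing the tail required by the lemma.
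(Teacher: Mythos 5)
Your proof is correct, but it takes a genuinely different route from the paper's. The paper argues via a direct union bound: it reduces WLOG to $|F|=s$, then bounds the probability that $F_q$ has a maximal (by inclusion) matching of size $\ell < c_2s/d$, summing over $\ell$ and over choices of the $\ell$-edge matching, and observing that maximality forces all but at most $2\ell d$ of the remaining edges of $F$ to be absent. This keeps the argument elementary and self-contained, at the cost of some careful accounting in the geometric-type sum. You instead lower-bound the expectation of the matching number by counting isolated edges (each edge of $F$ meets at most $2d-2$ others, so it is isolated in $F_q$ with probability $\ge q(1-q)^{2d-2}$, giving $\mathbb{E}[\mu(F_q)]=\Omega(s/d)$), and then invoke Boucheron--Lugosi--Massart concentration after verifying that the matching number is self-bounding; the self-bounding verification (the sum of the discrete partial differences counts the edges lying in every maximum matching, hence is at most $\mu$) is clean and correct. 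Your route buys generality and conceptual clarity --- it naturally works for any $|F|\ge s$, and it isolates precisely why Azuma--Hoeffding would be too weak here --- at the cost of relying on a heavier off-the-shelf concentration tool. Both arguments give the required $1-\exp(-\Omega(s/d))$ bound; the only minor nit is that your final constant should be $c_3/8$ rather than $c_3/16$, but since the weaker constant only loosens the statement this is harmless.
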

\begin{proof}
We may assume $|F|=s$. If the matching number of $F_q$ is less than $\frac{c_2s}{d}$, then $F_q$ contains a maximal (by inclusion) matching of size $\ell< \frac{c_2s}{d}$. Let us then consider the number of maximal matchings in $F_q$ of size $\ell<\frac{c_2s}{d}$.
    
There are at most $\binom{|F|}{\ell}=\binom{s}{\ell}$ maximal matchings of size $\ell$ in $F$. Given a fixed matching $M$ of size $\ell$ in $F$, in order for it to be a maximal matching in $F_q$ its edges have to be retained, which happens with probability $q^\ell$, and there are no other edges in $F_q$ which are disjoint from $M$. Since $G$ is $d$-regular, there are at most $2\ell d$ edges which share a vertex with edges in $M$. Hence, there is a set of at least $|F|-2\ell d$ edges which do not appear in $F_q$, which happens with probability at most 
\begin{align*}
    (1-q)^{|F|-2\ell d}&\le \exp\left(-\frac{\delta s(1-2c_2)}{d}\right).
\end{align*}
Therefore, by the union bound, the probability that $F_q$ contains a maximal matching of size $\ell<\frac{c_2s}{d}$ is at most
\begin{align*}
    \sum_{\ell=0}^{\frac{c_2s}{d}}\binom{s}{\ell}\left(\frac{\delta}{d}\right)^\ell\exp\left(-\frac{\delta s(1-2c_2)}{d}\right)&\le \exp\left(-\frac{\delta s(1-2c_2)}{d}\right)\left(1+\sum_{\ell=1}^{\frac{c_2s}{d}}\left(\frac{e\delta s}{d\ell}\right)^{\ell}\right).
\end{align*}
Since $s\ge c_1d$ and for $c_2=c_2(c_1,\delta)$ small enough in terms of $c_1$ and $\delta$, the ratio of consecutive terms $\left(\frac{e\delta s}{d\ell}\right)^{\ell}$ is at least $2$, and hence the sum is dominated by the final term. Therefore, 
\begin{align*}
    \exp\left(-\frac{\delta s(1-2c_2)}{d}\right)\left(1+\sum_{\ell=1}^{\frac{c_2s}{d}}\left(\frac{e\delta s}{d\ell}\right)^{\ell}\right)&\le 3\exp\left(-\frac{\delta s(1-2c_2)}{d}\right)\left(\frac{e\delta}{c_2}\right)^{\frac{c_2s}{d}}\\
    &\le \exp\left(-\frac{c_2s}{d}\right),
\end{align*}
for small enough $c_2$.
\end{proof}

The second result bounds the typical number of high-degree vertices in $G_p$.
\begin{lemma}\label{l: high degree}
\textbf{Whp}, there are at most $\frac{n}{d^4}$ vertices of degree at least $\ln d$ in $G_p$.
\end{lemma}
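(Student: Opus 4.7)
The plan is a direct first moment calculation. For any fixed vertex $v \in V(G)$, the degree $\deg_{G_p}(v)$ is binomially distributed as $\mathrm{Bin}(d, p)$ with mean $1+\epsilon$. I would apply the Chernoff-type bound in Lemma \ref{l: Chernoff} with $N=d$, $p=(1+\epsilon)/d$ and $b = \ln d / (1+\epsilon)$, so that $bNp = \ln d$, which yields
\[
\mathbb{P}\bigl(\deg_{G_p}(v) \geq \ln d\bigr) \le \left(\frac{e(1+\epsilon)}{\ln d}\right)^{\ln d}.
\]

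Let $X$ denote the number of vertices in $G_p$ of degree at least $\ln d$. By linearity of expectation,
\[
\mathbb{E}[X] \le n\left(\frac{e(1+\epsilon)}{\ln d}\right)^{\ln d}.
\]
Since each base graph is non-trivial and connected, $d_j \geq 1$ for every $j \in [t]$, and therefore $d \geq t$, so $d \to \infty$ as $t \to \infty$. In particular, for $d$ large enough,
\[
\left(\frac{e(1+\epsilon)}{\ln d}\right)^{\ln d} = \exp\bigl(-(1-o(1))\ln d \cdot \ln \ln d\bigr) \le \frac{1}{d^5}.
\]

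Finally I would apply Markov's inequality:
\[
\mathbb{P}\left(X \ge \frac{n}{d^4}\right) \le \frac{d^4 \cdot \mathbb{E}[X]}{n} \le \frac{1}{d} = o(1).
\]
There is no serious obstacle here; the proof is essentially a one-shot Chernoff plus Markov argument, the only point to check being that $d$ indeed tends to infinity, which follows from the assumption that each base graph has more than one vertex.
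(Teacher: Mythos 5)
Your proposal is correct and matches the paper's proof essentially step for step: a Chernoff bound (Lemma \ref{l: Chernoff}) on the binomial degree of a fixed vertex, linearity of expectation, and Markov's inequality. The only cosmetic difference is that the paper bounds the per-vertex probability by $d^{-(\ln\ln d)/2}$ while you simplify further to $d^{-5}$; both suffice.
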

\begin{proof}
Fix a vertex $v\in V(G)$. The degree of $v$ in $G_p$ is distributed according to $Bin(d,p)$. Thus, by Lemma \ref{l: Chernoff},
\begin{align*}
    \mathbb{P}\left(d_{G_p}(v)\ge \ln d\right)\le \left(\frac{e(1+\epsilon)}{\ln d}\right)^{\ln d}\le d^{-\frac{\ln\ln d}{2}}.
\end{align*}
Hence, the expected number of vertices in $G_p$ with degree at least $\ln d$ is at most $nd^{-\frac{\ln\ln d}{2}}$. Therefore, by Markov's inequality, \textbf{whp} there are at most $\frac{n}{d^4}$ vertices of degree at least $\ln d$ in $G_p$.
\end{proof}

\section{Isoperimetric inequalities}\label{iso}

The proofs of Theorems \ref{th: iso 1} and \ref{th: iso 2} will both use discrete entropy as a tool, but in quite different ways. For the proof of Theorem \ref{th: iso 1}, we require the following lemma bounding the entropy of a random variable from below.
\begin{lemma}\label{l: iso 1}
Let $C\ge 2$ be an integer and let $X$ be a random variable supported on $[C]$. For each $i\in [C]$, let $p(i)\coloneqq \mathbb{P}(X=i)$. Assume without loss of generality that $p(1) \leq p(2) \leq \ldots \leq p(C)$. Then
\[
\frac{C}{C-1}\left(1 - p(C)\right)\le H(X).
\]
\end{lemma}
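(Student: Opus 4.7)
The plan is to prove this by induction on $C$, using the entropy chain rule to reduce the claim for $C$ to the claim for $C-1$.

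For the base case $C = 2$, the variable $X$ is Bernoulli, and setting $p = p(1) \in [0, 1/2]$ the claim becomes $h(p) \geq 2p$, where $h(p) = -p\log_2 p - (1-p)\log_2(1-p)$ is the binary entropy. The function $f(p) = h(p) - 2p$ vanishes at $p = 0$ and at $p = 1/2$ and is strictly concave since $h''(p) = -1/(p(1-p)\ln 2) < 0$, so $f \geq 0$ on $[0, 1/2]$.

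For the inductive step with $C \geq 3$, write $M = p(C)$ and note the case $M = 1$ is trivial, so assume $M < 1$ and let $Y$ denote the conditional distribution on $[C-1]$ given $X \neq C$, so $q_i = p(i)/(1-M)$ and $\max_i q_i = p(C-1)/(1-M)$. The chain rule yields $H(X) = h(M) + (1-M)H(Y)$, and applying the inductive hypothesis to $Y$ gives
\[
H(X) \geq h(M) + \frac{C-1}{C-2}\bigl(1 - M - p(C-1)\bigr).
\]
It therefore suffices to show $h(M) + \frac{C-1}{C-2}(1 - M - p(C-1)) \geq \frac{C}{C-1}(1-M)$, which rearranges to
\[
h(M) + \frac{1-M}{(C-1)(C-2)} \geq \frac{C-1}{C-2}\, p(C-1).
\]
The right-hand side is linear and increasing in $p(C-1)$, and the ordering together with $\sum_{i<C} p(i) = 1-M$ forces $p(C-1) \leq \min(M, 1-M)$, so it is enough to verify the worst case $p(C-1) = \min(M, 1-M)$.

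This yields two sub-cases. When $M \in [1/C, 1/2]$, the inequality to check becomes $h(M) + \frac{1-M}{(C-1)(C-2)} \geq \frac{C-1}{C-2} M$; when $M \in [1/2, 1]$, the $\frac{1-M}{(C-1)(C-2)}$ term on the left combines with the right to simplify the claim to $h(M) \geq \frac{C}{C-1}(1-M)$. In each sub-case the difference LHS $-$ RHS is a concave function of $M$ (because $h$ is concave and the remaining terms are linear), so non-negativity on the interval follows from non-negativity at its endpoints. At $M = 1/C$ the difference equals $h(1/C) - 1/C$, which is non-negative because $h(1/C) \geq (1/C)\log_2 C \geq 1/C$; at $M = 1/2$ both sub-cases give the manifestly non-negative value $(C-2)/(2(C-1))$; at $M = 1$ the difference is $0$. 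The main obstacle is bookkeeping through this endpoint analysis, but since each reduction produces a concave function with explicit endpoint values, no delicate estimates are required.
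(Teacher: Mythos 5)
Your proof is correct. The high-level strategy is the same as the paper's: both proceed by induction on $C$ and both condition on the event $\{X = C\}$, writing $H(X)$ via the chain rule in terms of the entropy of the indicator of that event and the conditional entropy on its complement, then apply the inductive hypothesis to the conditional distribution on $[C-1]$. The paper's proof also uses the same case split, namely whether $p(C) \geq 1/2$ or not.

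Where the two diverge is in how the ``indicator entropy'' is handled. The paper applies the lemma's bound once more, now to the two-valued indicator variable itself, yielding the linear lower bound $H(Y) \geq \frac{C}{C-1}\bigl(1 - \max\{q(0), q(1)\}\bigr)$; this turns the remaining verification into pure algebra (a single chain of inequalities in the $p(C) < 1/2$ case, immediate in the other). You instead keep the exact binary entropy $h(M)$, reduce to a scalar inequality in $M$, and close it via concavity together with endpoint checks at $M \in \{1/C, 1/2, 1\}$. Both are valid; the paper's substitution is slightly slicker because it avoids any calculus, while your argument is a touch more self-contained since it never invokes the statement on the indicator. In fact, your sub-case $M \geq 1/2$ reduces to precisely $h(M) \geq \frac{C}{C-1}(1-M)$, which is weaker than the $C=2$ base case $h(M) \geq 2(1-M)$ you already proved, so you could have cited it directly there rather than redoing the concavity/endpoint check.
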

\begin{proof}
We prove the result by induction on $C$. For $C=2$ we note that $0 \leq p(1) \leq p(2)$ and $p(1) + p(2) =1$, and so in particular $p(1)p(2) \leq \frac{1}{4}$. It follows that
\begin{align*}
H(X) &= p(1) \log_2  \frac{1}{p(1)} + p(2)\log_2 \frac{1}{p(2)} \geq p(1) \left(\log_2  \frac{1}{p(1)} + \log_2 \frac{1}{p(2)}\right) \\
&=  p(1) \left(\log_2 \frac{1}{p(1)p(2)}\right) \geq p(1) \log_2 4 \geq 2p(1) = 2\left(1-p(2)\right).
\end{align*}

Suppose that $C >2$. Let $Y$ be the indicator random variable of the event that $X=C$. Note that because $X$ determines $Y$, by Remark \ref{r: entropy},
\begin{align*}
    H(X)=H(X,Y)=H(Y)+H(X|Y).
\end{align*}
Let $q(1)\coloneqq \mathbb{P}(Y=1) = p(C)$ and $q(0)\coloneqq \mathbb{P}(Y=0) = \sum_{i=1}^{C-1} p(i)$.

If $q(1) \geq q(0)$, then by the induction hypothesis applied to $Y$ we can conclude that
\[
H(X) \geq H(Y) \geq \frac{C}{C-1}\left(1 - q(1)\right) = \frac{C}{C-1}\left(1-p(C)\right),
\]
as claimed.

Otherwise, again by the induction hypothesis applied to $Y$, we have that
\begin{align*}
    H(Y)\ge \frac{C}{C-1}\left(1-q(0)\right).
\end{align*}
Thus, we obtain that
\begin{align*}
H(X) &= H(Y) + H(X|Y) \\
&\geq \frac{C}{C-1}(1-q(0)) + \mathbb{P}(Y=1)H(X|Y=1) + \mathbb{P}(Y=0)H(X|Y=0).
\end{align*}
However, on the event $\{Y=1\}$ we have $X=C$, and so the second term is $0$, and by the induction hypothesis applied to the random variable $X$ conditional on $\{Y=0\}$, which is supported on $[C-1]$, we can conclude that
\[
H(X|Y=0) \geq \frac{C-1}{C-2}\left(1 - \frac{p(C-1)}{q(0)}\right).
\]

It follows that
\begin{align*}
H(X) &\geq \frac{C}{C-1}\left(1-q(0)\right) + \frac{C-1}{C-2}q(0)\left(1 - \frac{p(C-1)}{q(0)}\right)\\
&=\frac{C}{C-1}-\frac{C}{C-1}q(0) +\frac{C-1}{C-2}q(0)-\frac{C-1}{C-2}p(C-1)\\
&\ge \frac{C}{C-1}\left(1-p(C)\right).
\end{align*}
\end{proof}

An immediate corollary of Lemma \ref{l: iso 1} is the following inequality which is key to the proof of Theorem \ref{th: iso 1}.
\begin{corollary}\label{c: iso 1}
Let $C\ge 2$ be an integer and let $0\le k_1\le \cdots \le k_C$ and $k = \sum_{i=1}^C k_i$. Then
\begin{align*}
    \frac{C}{C-1}\left(k - k_C\right) + \sum_{i=1}^C k_i \log_2 k_i\le k \log_2 k.
\end{align*}
\end{corollary}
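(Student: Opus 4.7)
The plan is to derive the corollary as a direct consequence of Lemma \ref{l: iso 1}, by encoding the values $k_1, \ldots, k_C$ as probabilities. First I would dispose of the trivial case $k = 0$: here every $k_i = 0$, and with the convention $0 \log_2 0 = 0$ both sides vanish.

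For $k > 0$, I would define a discrete random variable $X$ supported on $[C]$ by setting $p(i) \coloneqq \mathbb{P}(X = i) = k_i / k$. The ordering hypothesis $k_1 \leq k_2 \leq \cdots \leq k_C$ ensures $p(1) \leq p(2) \leq \cdots \leq p(C)$, matching the hypothesis of Lemma \ref{l: iso 1}; the case where some $k_i = 0$ is harmless since the lemma only asks for $X$ to be supported on $[C]$ (and the entropy definition tolerates zero-probability atoms via the same $0\log_2 0 = 0$ convention).

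Next I would compute the entropy explicitly. Writing
\begin{align*}
H(X) = \sum_{i=1}^C \frac{k_i}{k} \log_2 \frac{k}{k_i} = \log_2 k - \frac{1}{k} \sum_{i=1}^C k_i \log_2 k_i,
\end{align*}
and applying Lemma \ref{l: iso 1} gives
\begin{align*}
\log_2 k - \frac{1}{k} \sum_{i=1}^C k_i \log_2 k_i \;=\; H(X) \;\geq\; \frac{C}{C-1}\left(1 - \frac{k_C}{k}\right) \;=\; \frac{C}{C-1} \cdot \frac{k - k_C}{k}.
\end{align*}
Multiplying through by $k > 0$ and rearranging yields exactly the claimed inequality
\begin{align*}
\frac{C}{C-1}(k - k_C) + \sum_{i=1}^C k_i \log_2 k_i \leq k \log_2 k.
\end{align*}

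There is no real obstacle here: once the probabilistic interpretation is set up, the corollary is essentially a restatement of Lemma \ref{l: iso 1} after clearing denominators. The only points requiring minor care are the boundary cases (handling $k=0$ and zero-valued $k_i$'s), and verifying that the ordering hypothesis on the $k_i$'s transports correctly to the hypothesis $p(1) \leq \cdots \leq p(C)$ in the lemma.
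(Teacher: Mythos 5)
Your proposal is correct and follows essentially the same route as the paper: define $X$ supported on $[C]$ with $p(i)=k_i/k$, compute $H(X)=\log_2 k - \frac{1}{k}\sum_i k_i\log_2 k_i$, apply Lemma~\ref{l: iso 1}, and clear the denominator. The only difference is that you explicitly dispose of the $k=0$ case, which the paper leaves implicit.
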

\begin{proof}
Let $X$ be a random variable supported on $[C]$ with $p(i) =\mathbb{P}(X=i)= \frac{k_i}{k}$ for each $i\in[C]$. Then, by the previous lemma,
\begin{align*}
\frac{C}{C-1}\left(1 - \frac{k_C}{k}\right) \leq  H(X) &= \sum_{i=1}^C \frac{k_i}{k} \log_2 \frac{k}{k_i} \\
&= \sum_{i=1}^C \frac{k_i}{k} \log_2 k - \sum_{i=1}^C \frac{k_i}{k} \log_2 k_i\\
&= \log_2 k - \sum_{i=1}^C \frac{k_i}{k} \log_2 k_i,
\end{align*}
which rearranges to give the claimed inequality.
\end{proof}

As will be seen in the proof of Theorem \ref{th: iso 1}, the inequality proven in Corollary \ref{c: iso 1} allows us to inductively bound the density of certain sets by considering an appropriate collection of projections. Using the regularity of the graph we can relate this density bound to an isoperimetric inequality.

\begin{proof}[Proof of Theorem \ref{th: iso 1}]
Let $k\coloneqq|S|$, and we may assume that $k\ge 2$. We claim that 
\begin{equation}\label{e:isoclaim}
\sum_{v\in S}d_{G[S]}(v)\le (C-1)k\log_2k. 
\end{equation}
Then, assuming that \eqref{e:isoclaim} holds, since $G$ is $d$-regular we obtain that 
\begin{align*}
    |\partial S|=|S|\left(d-d(G[S])\right)\ge k(d-(C-1)\log_2k),
\end{align*}
as required.

We prove \eqref{e:isoclaim} by induction on the dimension $t$ of the product graph $G$. For $t=1$, since $2\le k \le C$, we indeed have that 
\begin{align*}
    \sum_{v\in S}d_{G[S]}(v)\le \frac{k(k-1)}{2}\le (C-1)k\log_2k.
\end{align*}

Assume that \eqref{e:isoclaim} holds for all graphs of dimension $t'<t$. We may assume that, without loss of generality, $V(G^{(1)})=\{v_1,\ldots,v_C\}$. Let $H_1,\ldots, H_C$ be pairwise disjoint projections of $G$, such that $H_i$ is obtained by fixing the first coordinate of $G$ to be $v_i\in V(G^{(1)})$. Let $S_i=S\cap V(H_i)$ and set $k_i\coloneqq|S_i|$. Note that we have $\sum_{i=1}^Ck_i=k$, and we may assume without loss of generality that $k_1\le k_2 \le \ldots \le k_C$. Since each $H_i$ has dimension $t-1$, by the induction hypothesis, for all $1\le i \le C$,
\begin{align*}
    \sum_{v\in S_i}d_{G[S_i]}(v)=\sum_{v\in S_i}d_{H_i[S_i]}(v)\le (C-1)k_i\log_2k_i.
\end{align*}

Furthermore, observe that each vertex in $H_i$ has at most one neighbour in each $H_j$ for $j\neq i$. In particular, since $k_1\le k_2\le \ldots \le k_C$, it follows that $e(S_i, S_j)\le k_i$ whenever $i\le j$. Thus,
\begin{align*}
    \sum_{v\in S}d_{G[S]}(v)&=\sum_{i=1}^C\left(\sum_{v\in S_i}d_{G[S_i]}(v)+\sum_{j\neq i}e(S_i,S_j)\right)\\
    &\le \sum_{i=1}^C\left((C-1)k_i\log_2k_i+(C-i)k_i+\sum_{j< i}k_j\right)\\
    &\le \sum_{i=1}^C\Bigg((C-1)k_i\log_2k_i+(C-i)k_i+(i-1)k_{i-1}\Bigg)\\ 
    &\le C\left(k-k_C\right)+(C-1)\sum_{i=1}^Ck_i\log_2k_i.
\end{align*}
Therefore, we have by the above and by Corollary \ref{c: iso 1} that
\begin{align*}
    \sum_{v\in S}d_G[S](v)&\le (C-1)\left(\frac{C}{C-1}\left(k-k_C\right)+\sum_{i=1}^Ck_i\log_2k_i\right)\le (C-1)k\log_2k,
\end{align*}
as claimed.
\end{proof}

The proof of Theorem \ref{th: iso 2} will also utilise the entropy function, specifically Shearer's Lemma (Lemma \ref{l: entropy}) in a key way.
\begin{proof}[Proof of Theorem \ref{th: iso 2}]

Given $S\subseteq V(G)$, let $X$ be a uniformly distributed random variable on $S$, so that ${H(X)=\log_2|S|}$. Observe that we may consider $X$ as a random vector $X=(X_1,\ldots, X_t)$, where the random variables $X_i$ are given by the coordinates of the vertex $X\in V(G^{(1)})\times\cdots\times V(G^{(t)})$. For each $i\in [t]$ let $A_{-i}\coloneqq [t]\setminus \{i\}$ and let us set
\[
X_{-i}\coloneqq X_{A_{-i}} =  (X_1,\ldots, X_{i-1},X_{i+1},\ldots, X_t).
\]
Note that each $i\in [t]$ appears in exactly $t-1$ members of the family $\mathcal{A}=\left\{A_{-i}\colon i\in [t]\right\}$.

Thus, by Lemma \ref{l: entropy},
\begin{align}
    H(X)\le \frac{1}{t-1}\sum_{i=1}^tH(X_{-i}). \label{e:shearer}
\end{align}
Therefore, observing that $X$ determines $X_{-i}$ and $X_i$, we have by the above and by Remark \ref{r: entropy} that
\begin{align}\label{e:reduction}
    H(X)\stackrel{(\ref{e:shearer})}{\ge} \sum_{i=1}^t\left(H(X)-H(X_{-i})\right)\stackrel{(\ref{eq:X1detX2})}{=}\sum_{i=1}^tH(X|X_{-i})= \sum_{i=1}^tH(X_i,X_{-i}|X_{-i})\stackrel{(\ref{eq:X3detX2})}{=}\sum_{i=1}^tH(X_i|X_{-i}).
\end{align}

By definition,
\begin{align}\label{eq: entropy}
    H(X_i|X_{-i})=\sum_{x_{-i}}\mathbb{P}(X_{-i}=x_{-i})H(X_i|X_{-i}=x_{-i}) \eqqcolon \sum_{x_{-i}} w(x_{-i}),
\end{align}
where the sum ranges over the vectors $x_{-i}$ in the range of $X_{-i}$. 

Given such a point $x_{-i}$, there are $1\le r(x_{-i})\le C_i\coloneqq |V(G^{(i)})|$ vertices in $S$ whose projection is $x_{-i}$, where $\mathbb{P}(X_{-i}=x_{-i})=\frac{r(x_{-i})}{|S|}$. Then, since $X$ is uniformly distributed on $S$,
\begin{align*}
    H(X_i|X_{-i}=x_{-i})=\log_2r(x_{-i}).
\end{align*}
It follows that for each $x_{-i}$,
\[
w(x_{-i}) = \frac{r(x_{-i})\log_2r(x_{-i})}{|S|} \leq \frac{r(x_{-i})\log_2C_i}{|S|} \eqqcolon w'(x_{-i}),
\]
with equality if and only if $r(x_{-i})=C_i$.

However, since each $G^{(i)}$ is connected, for each $x_{-i}$ in the range of $X_{-i}$ where $r(x_{-i})<C_i$ there is at least one edge in the edge-boundary of $S$ in direction $i$. In particular, there are at most $|\partial_i(S)|$ many vectors $x_{-i}$ such that $r(x_{-i})<C_i$, where $\partial_i(S)$ denotes the edges in the edge-boundary of $S$ in the $i$-th direction, that is, that are obtained by changing the $i$-th coordinate of some $v\in S$. Furthermore, for each $x_{-i}$ with $r(x_{-i})<C_i$,
\[
w'(x_{-i}) - w(x_{-i}) \leq w'(x_{-i}) \le \frac{(C-1)\log_2C}{|S|}.
\]

Thus, by \eqref{eq: entropy}
\begin{align}
    H(X_i|X_{-i})&=\sum_{x_{-i}} w(x_{-i}) \nonumber\\
    &= \sum_{x_{-i}} w'(x_{-i}) +  \sum_{\substack{x_{-i} \\ r(x_{-i}) < C_i}} \big( w(x_{-i}) - w'(x_{-i})\big) \nonumber\\
    &\ge \log_2C_i-|\partial_i(S)|\frac{(C-1)\log_2C}{|S|}. \label{e:conditional}
\end{align}
Therefore, by \eqref{e:reduction} and \eqref{e:conditional},
\begin{align*}
    \log_2|S|=H(X)&\ge \sum_{i=1}^tH(X_i|X_{-i})\\
    &\ge \sum_{i=1}^t\left(\log_2C_i-|\partial_i(S)|\frac{(C-1)\log_2C}{|S|}\right)\\
    &\ge \log_2|V(G)|-|\partial(S)|\frac{(C-1)\log_2C}{|S|}.
\end{align*}
Rearranging, we obtain
\begin{align*}
    \frac{|\partial(S)|}{|S|}\ge \frac{\log_2|V(G)|-\log_2|S|}{(C-1)\log_2C}=\frac{1}{C-1}\log_C\left(\frac{|V(G)|}{|S|}\right),
\end{align*}
as claimed.
\end{proof}

\section{Expansion and Expanders}\label{expansion} 
We begin with the proof of the first part of Theorem \ref{th: connected expansion}. We note that the proof includes several elements similar to the proof of Lemma \ref{l: matching}.
\begin{proof}[Proof of Theorem \ref{th: connected expansion}\ref{i:small}]
We will assume that $c \leq \epsilon^4$. Given $\frac{7Cd}{\epsilon^2}\le k \le n^{\epsilon^5}$, let $\mathcal{A}_k$ be the event there exists a set $S \subseteq V(L_1)$ of order $k$ such that $S$ is connected in $G_p$ and $|N_{G_p}(S)| < c |S|$. Since $S$ is connected in $G_p$ it contains a spanning tree. Therefore, if $\mathcal{A}_k$ occurs, then there is some tree $T$ whose vertex set is $S$, all of whose edges are in $G_p$. By Lemma \ref{l: trees}, there are at most $n(ed)^{k-1}$ ways to choose the tree $T$, and the edges of $T$ are present in $G_p$ with probability $p^{k-1}$. 

Now, consider the auxiliary random bipartite graph $\Gamma(S,p)$, whose one side is $S$, the other side is $N_G(S)$, and we retain every edge of $G$ between $S$ and $N_G(S)$ in $\Gamma(S,p)$ independently with probability $p$. We then have that $|N_{G_p}(S)|\ge \nu \left(\Gamma(S,p)\right)$, where $\nu(H)$ is the matching number of $H$. Thus, it suffices to bound the probability that a maximum matching in $\Gamma(S,p)$ is smaller than $\epsilon^4k$, that is, 
\begin{align}
    \mathbb{P}\left(\mathcal{A}_k\right)&\le \sum_{\substack{S \subseteq V(G), |S|=k\\ T \text{ a tree} ,V(T)=S}}\mathbb{P}\left(\left(E(T)\subseteq E(G_p)\right)\land \left(\nu\left(\Gamma(S,p)\right)\le \epsilon^4k\right)\right) \nonumber\\
    &\le n(edp)^{k-1}\mathbb{P}\left(\nu\left(\Gamma(S,p)\right)\le \epsilon^4k\right). \label{e:balance}
\end{align}

Let us first bound the probability that $\nu\left(\Gamma(S,p)\right)=i$. This is, at most, the probability that $\Gamma(S,p)$ has an inclusion-maximal matching of size $i$. We have at most $\binom{kd}{i}$ ways to choose a matching $M$ of size $i$, and we then need to include the edges of the matching, which occurs with probability $p^i$. Due to the maximality of $M$, every edge of $G$ between $S$ and $N_G(S)$ disjoint from $M$ is not in $\Gamma(S,p)$. Thus, we have at least $|\partial(S)|-2id$ edges that do not fall into $\Gamma(S,p)$. Since $n \leq C^d$, by Theorem \ref{th: iso 1}
\begin{align*}
   |\partial(S)| \ge k\left(d-(C-1)\log_2k\right)&\ge k\left(d-(C-1)\cdot\log_2C\cdot\epsilon^5d\right)\ge (1-\epsilon^4)kd.
\end{align*}
Hence, by the union bound,
\begin{align*}
    \mathbb{P}\left(\nu\left(\Gamma(S,p)\right)=i\right)\le \binom{kd}{i}p^i(1-p)^{(1-\epsilon^4)kd-2id}.
\end{align*}
All in all, we obtain that
\begin{align*}
    \mathbb{P}\left(\mathcal{A}_k\right)&\le n(ed)^{k-1}p^{k-1}\sum_{i=0}^{\epsilon^4k}\binom{kd}{i}p^i(1-p)^{(1-\epsilon^4)kd-2id}\\
    &=n(edp)^{k-1}(1-p)^{(1-\epsilon^4)kd}\sum_{i=0}^{\epsilon^4k}\binom{kd}{i}p^i(1-p)^{-2id}\\
    &\le n\left((1+\epsilon)\exp\left(1-(1+\epsilon)(1-\epsilon^4)\right)\right)^k\left(1+\sum_{i=1}^{\epsilon^4k}\left(\frac{k(1+\epsilon)e}{i}\right)^i\exp\left(2(1+\epsilon)i\right)\right)\\
    &\le n\left((1+\epsilon)\exp\left(-\epsilon+2\epsilon^4\right)\right)^k\left(1+\sum_{i=1}^{\epsilon^4k}\left(\frac{e^4k}{i}\right)^i\right).
\end{align*}
Observe that the ratio of consecutive terms of $\left(\frac{e^4k}{i}\right)^i$ is at least $2$, and hence the sum is dominated by the last term. That is,
\begin{align*}
    \mathbb{P}\left(\mathcal{A}_k\right)&\le 2n\left((1+\epsilon)\exp\left(-\epsilon+2\epsilon^4\right)\right)^k\left(\frac{e^4}{\epsilon^4}\right)^{\epsilon^4k}\\
    &\le 2n\left((1+\epsilon)\exp\left(-\epsilon+\epsilon^3\right)\right)^k.
\end{align*}
Using $1+x\le \exp\left(x-\frac{x^2}{3}\right)$ for small enough $x>0$, together with $\ln n \le \ln C \cdot d$ (since $n\le C^t \le C^d$) and our assumption that $k\ge \frac{9\ln c \cdot d}{\epsilon^2}$, we obtain that
\begin{align*}
    \mathbb{P}\left(\mathcal{A}_k\right)\le  3n\exp\left(-\frac{\epsilon^2k}{4}\right)=o(1/n).
\end{align*}
Taking a union bound over the less than $n$ different values of $k$ completes the proof. 
\end{proof}

Throughout the rest of the section, we assume that $\epsilon>0$ is a small enough constant and let $\delta=\delta(\epsilon)\le \epsilon^3$ be a positive constant. We define $p_2=\frac{\delta}{d}$ and let $p_1$ be such that $(1-p_1)(1-p_2)=1-p$. We form $G_{p_i}$, $i\in \{1,2\}$, by including every edge of $G$ independently and with probability $p_i$. We set $G_1=G_{p_1}$ and $G_2=G_{p_2}\cup G_1$, so that $G_2$ has the same distribution as $G_p$. We note that by Theorem \ref{th: dekk22}, \textbf{whp} $G_{p_1}$ has a unique giant component, which we denote by $L_1'$, and that \textbf{whp} $G_{p}$ has a unique giant component which we denote by $L_1$, where $L_1'\subseteq L_1$.

\subsection{Expansion of subsets of the early giant}
We begin by showing likely expansion properties of subsets of the early giant. We will require the following density result.
\begin{lemma}[Lemma 4.7 in \cite{DEKK22}, rephrased]\label{l: density}
There exists a constant $c=c(\epsilon)>0$ such that \textbf{whp} every $v\in V(G)$ is at distance (in $G$) at most two from at least $cd^2$ vertices in $L_1'$.
\end{lemma}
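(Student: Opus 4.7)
The plan is to exploit a two-round exposure inside $p_1$, then concentrate and union-bound over $v$. Split $1-p_1 = (1-q_1)(1-q_2)$ with $q_2 = \delta'/d$ for a sufficiently small constant $\delta' = \delta'(\epsilon) > 0$, so that $q_1 \ge (1+\epsilon')/d$ remains supercritical for some constant $\epsilon' > 0$. By Theorem \ref{th: dekk22} applied to $G_{q_1}$, \textbf{whp} the first round already produces a giant component $L^*$ of linear order, and a standard sprinkling/monotonicity argument (of the kind used later in the paper) shows $L^* \subseteq L_1'$ \textbf{whp}. Since each base graph is connected and of bounded size, the $2$-ball $B_v := B_2^G(v)$ satisfies $|B_v| = \Theta(d^2)$ for every $v$, so it suffices to prove \textbf{whp} $|B_v \cap L_1'| \ge cd^2$ for all $v$.

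The crucial subclaim is that, \textbf{whp} over the first round, for every vertex $v$ at least half of the vertices $u \in B_v$ satisfy $|N_G(u) \cap L^*| \ge \alpha d$ for some constant $\alpha = \alpha(\epsilon) > 0$. To see this, one bounds the total number of \emph{bad} vertices --- those $u$ with $|N_G(u) \cap L^*| < \alpha d$ --- by combining the Cheeger-type bound of Theorem \ref{th: CT98} with the typical expansion of $L^*$ in $G_{q_1}$: such vertices contribute only a small amount to the edge-boundary of $V(G)\setminus L^*$, and the isoperimetric bound forces the bad set to have size $o(n/d^2)$ in expectation. An Azuma-Hoeffding bound (Lemma \ref{l: AH}) applied to this count, viewed as a Lipschitz function of the independent edge indicators of $G_{q_1}$, then yields the \textbf{whp} statement. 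This is the main obstacle: the quantitative bound on the number of bad vertices must beat the cost of the union bound over the $n \le C^d = \exp(O(d))$ choices of $v$, and establishing it uniformly over all $2$-balls is where the isoperimetric machinery does the heavy lifting.

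Given the subclaim, condition on $G_{q_1}$ and on the subclaim holding. For each of the $\Theta(d^2)$ good vertices $u \in B_v$, the event \emph{``$u$ gains a $G_{q_2}$-edge into $L^*$''} depends only on edges incident to $u$, so these events are mutually independent across distinct $u$, and each has probability at least $1 - (1 - q_2)^{\alpha d} \ge 1 - \exp(-\alpha \delta') =: \beta > 0$. By Chernoff (Lemma \ref{l: Chernoff}), with probability at least $1 - \exp(-\Omega(d^2))$, at least $cd^2$ of these good vertices acquire such an edge and thus join $L^* \subseteq L_1'$. A union bound over $v$ costs only $n \le \exp(O(d))$, which is dwarfed by $\exp(-\Omega(d^2))$, completing the proof.
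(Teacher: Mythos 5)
The lemma is cited in the paper from \cite{DEKK22} without proof, so there is no in-paper argument to compare against; I will assess your proposal on its own terms. Your two-round exposure and final sprinkling step are sound: the facts that $|B_v|=\Theta(d^2)$, that the events ``$u$ gains a $G_{q_2}$-edge into $L^*$'' are independent across distinct $u\notin L^*$, and that $\exp(-\Omega(d^2))$ beats the $n\le C^d$ union bound over $v$, are all correct. But the subclaim carries essentially all the content of the lemma, and the argument you sketch for it has several genuine gaps.

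First, the Cheeger bound of Theorem \ref{th: CT98} gives $|\partial_G(L^*)|=\Theta(n)$, but this places no upper bound on the number of \emph{bad} vertices $u$ with $|N_G(u)\cap L^*|<\alpha d$: the entire boundary could be carried by $\Theta(n/d)$ very good vertices outside $L^*$, while the remaining $(1-y-o(1))n$ vertices outside $L^*$ are all bad. Nothing in the global isoperimetric inequality rules out such a ``clumped giant'' --- indeed, ruling it out is precisely the content of the lemma, so invoking it here would be circular. Second, even a global bound of $o(n/d^2)$ bad vertices would not yield the \emph{local} statement you need (every $2$-ball has at most half of its $\Theta(d^2)$ vertices bad); all the bad vertices could sit inside a single $B_v$, since a global count controls the average over $v$, not the maximum. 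Third, the count of bad vertices is not Lipschitz in the edge indicators of $G_{q_1}$ in the sense required by Lemma \ref{l: AH}: toggling one edge can merge an $\Omega(d)$-sized component into or out of $L^*$, changing the status of up to $\Omega(d^2)$ vertices, and the deterministic worst case is far worse than this. What the argument actually needs is the \emph{local} density statement of Lemma \ref{l: density2} (Lemma 4.6 of \cite{DEKK22}), applied to $G_{q_1}$ and $L^*$: every connected set of size $\Theta(d)$ contains few bad vertices. Covering $B_v$ by $\Theta(d)$ connected pieces of size $\Theta(d)$ (for example via Lemma \ref{l: tree decomposition}) then gives the subclaim uniformly in $v$, after which your sprinkling step does close the proof.
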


The following lemma, which uses Lemma \ref{l: density} together with an edge-isoperimetric inequality for $G$ (Theorem \ref{th: iso 2}) and a result on
large matchings in a random edge-subset of G (Lemma \ref{l: matching}), gives a good bound on the probability that subsets of the early giant expand well after sprinkling.
\begin{lemma}\label{l: disjoint paths}
There exists a constant $c=c(\delta)>0$ such that the following holds. Let $A\cup B=V(L_1')$ be a partition of $V(L_1')$ with $\min\left\{|A|,|B|\right\}=k$. Then, with probability at least 
\begin{align*}
    1-\exp\left(-\frac{ck\ln\left(\frac{n}{k}\right)}{d}\right),
\end{align*}
there exists a family of $\frac{ck\ln\left(\frac{n}{k}\right)}{d}$ vertex-disjoint $A-B$ paths of length at most five in $G_{p_2}$.
\end{lemma}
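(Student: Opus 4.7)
The plan is to combine the edge-isoperimetric inequality (Theorem \ref{th: iso 2}) for the host graph $G$, the density result of Lemma \ref{l: density}, and the matching-after-sprinkling lemma (Lemma \ref{l: matching}), via a case split on where the boundary edges of $A$ in $G$ go. Without loss of generality take $|A| = k$; then Theorem \ref{th: iso 2} gives $|\partial_G(A)| \geq \frac{k}{C-1}\log_C(n/k) = \Omega(k \ln(n/k))$. Writing $D := V(G) \setminus V(L_1')$, I split $\partial_G(A) = E_1 \sqcup E_2$ with $E_1 := E_G(A, B)$ and $E_2 := E_G(A, D)$, so that at least one of $|E_1|, |E_2|$ is $\Omega(k\ln(n/k))$.

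In the case $|E_1| = \Omega(k\ln(n/k))$, I would apply Lemma \ref{l: matching} directly to $E_1$ with retention probability $q = p_2 = \delta/d$. Since the statement to be proved is non-trivial only when $k\ln(n/k)/d \geq 1$, the hypothesis $|E_1| \geq c_1 d$ is satisfied, and the lemma produces, with probability at least $1 - \exp(-\Omega(k\ln(n/k)/d))$, a matching in $G_{p_2}$ of size $\Omega(|E_1|/d) = \Omega(k\ln(n/k)/d)$ whose matched edges are vertex-disjoint length-$1$ $A$-$B$ paths.

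In the case $|E_2| = \Omega(k\ln(n/k))$, I would extend each edge of $E_2$ through $D$ to reach $B$. Letting $D'$ be the set of $D$-endpoints of $E_2$, we have $|D'| \geq |E_2|/d = \Omega(k\ln(n/k)/d)$. By Lemma \ref{l: density}, each $v \in D'$ lies within $G$-distance $2$ of at least $cd^2$ vertices of $V(L_1')$; a double counting against the bound $|A|\cdot d^2 = kd^2$ on the number of pairs $(v,a) \in D' \times A$ at $G$-distance at most $2$ forces a positive constant fraction of these distance-$2$ $L_1'$-neighbors of each $v$ to lie in $B$ whenever $|D'| \gg k$. In the complementary regime $|D'| = O(k)$ one has $k = \Theta(|V(L_1')|)$, so $|B|$ covers a constant fraction of $V(G)$ and a direct averaging argument over $V(G)$ shows that the distance-$2$ $L_1'$-neighborhoods of most $D'$-vertices are predominantly in $B$. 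In either regime, each edge of $E_2$ extends in $G$ to $\Omega(d^2)$ walks of length at most $3$ terminating in $B$, producing $\Omega(k\ln(n/k) \cdot d)$ such walks in total; a path-variant of the sprinkling argument behind Lemma \ref{l: matching}, followed by a greedy pruning from edge-disjoint to vertex-disjoint paths (which costs only a constant factor, using the degree bound $d$ and the typical sparsity of $G_{p_2}$), then retains $\Omega(k\ln(n/k)/d)$ of these as vertex-disjoint length-$\leq 5$ $A$-$B$ paths in $G_{p_2}$, with probability at least $1 - \exp(-\Omega(k\ln(n/k)/d))$.

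The main obstacle will be the second case. The double-counting only forces the desired $B$-fraction of extensions when $|D'|$ dominates $k$, so the complementary regime genuinely requires the separate averaging argument based on the size of $B$. Moreover, the matching-style sprinkling has to be adapted to handle paths of length up to $5$ rather than single edges, with paths that may share internal vertices or edges; the careful point is to argue that, after sprinkling, enough of the resulting subpaths can be greedily pruned to a vertex-disjoint family of the claimed size while preserving the exponential tail probability $1 - \exp(-\Omega(k\ln(n/k)/d))$.
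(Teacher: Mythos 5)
Your high-level plan — isoperimetry of $G$ plus the density of $L_1'$ plus the sprinkling/matching lemma — is the right circle of ideas, but the execution has a real gap in the second case, and the paper avoids it with a structurally different decomposition.

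The paper does not apply Theorem~\ref{th: iso 2} to $A$ alone and then chase where the boundary goes. Instead it first \emph{enlarges} $A$ and $B$ to sets $A'\supseteq A$ and $B'\supseteq B$ that together partition all of $V(G)$: it iteratively defines $A_1$ (vertices with many $G$-neighbours in $A$), $B_1$ (remaining vertices with many neighbours in $B$), $A_2$, $B_2$, and uses Lemma~\ref{l: density} to show that every vertex of $V(G)$ is absorbed into one of these shells. Only then does it apply the isoperimetric inequality, to the pair $(A',B')$, so that \emph{every} boundary edge is automatically an $A'$--$B'$ edge, and the at-most-five-step paths are built by finding a short chain of sprinkled matchings between consecutive shells. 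This pre-sorting of $D$-vertices into an $A$-side and a $B$-side is precisely what your approach does not do, and it is what makes the second case of your split problematic.

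Concretely, in your Case~2 the assertion that ``$|D'| = O(k)$ forces $k = \Theta(|V(L_1')|)$'' is unjustified and appears to be false. From $|E_2| \gtrsim k\ln(n/k)$ one only gets $|D'|\ge |E_2|/d \gtrsim k\ln(n/k)/d$, and since $\ln(n/k) = O(d)$ always holds (as $n\le C^d$), the bound $|D'| = O(k)$ imposes no constraint on $k$ relative to $|V(L_1')|$. In the intermediate regime where $|D'|$ and $k$ are comparable, your double-counting against $|A|\cdot d^2$ does not give a small average, and the proposed fallback averaging over all of $V(G)$ does not localise to $D'$. So there are partitions $A\cup B$ for which your argument, as written, never establishes that most $D'$-vertices see $\Omega(d^2)$ vertices of $B$ within distance two, and without that the construction of the length-$\le 5$ paths collapses. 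Separately, the final step — a ``path-variant of Lemma~\ref{l: matching}'' followed by greedy pruning to vertex-disjointness — is doing a lot of unexamined work: once you form $\Omega(k\ln(n/k)\,d^2)$ candidate walks of length three they heavily share edges, so the $\exp(-\Omega(k\ln(n/k)/d))$ tail is not a routine consequence, whereas the paper's shell-by-shell matchings deliver vertex-disjointness and the tail bound automatically, each stage costing only another application of Lemma~\ref{l: matching}.
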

\begin{proof}
By Lemma \ref{l: density}, there exists a constant $c'>0$ such that \textbf{whp} every $v\in V(G)$ is at distance (in $G$) at most two from at least $c'd^2$ vertices in $L_1'$. We work on the event that every $v\in V(G)$ is at distance (in $G$) at most two from at least $c'd^2$ vertices in $L_1'$.

Throughout the proof, we will introduce constants $c_1$ up to $c_8$, under the assumption that each $c_i$ is sufficiently small in terms of $\delta$ and all $c_j$ with $j <i$. 

By assumption, every $v\in V$ is at distance at most two from at least $c'd^2$ vertices in $L_1'$. Let us now define four sets inductively:
\begin{align*}
    &A_1\coloneqq \left\{v\in V\setminus (B\cup A)\colon d_A(v)\ge \frac{c'd}{10}\right\}, \\
    &B_1\coloneqq \left\{v\in V\setminus (B\cup A\cup A_1)\colon d_B(v)\ge \frac{c'd}{10}\right\},\\
    &A_2\coloneqq \left\{v\in V\setminus (B\cup A\cup A_1 \cup B_1)\colon d_{A_1}(v)\ge \frac{c'd}{10}\right\},\\ &B_2\coloneqq \left\{v\in V\setminus(B\cup A\cup A_1 \cup B_1 \cup A_2)\colon d_{B_1}(v)\ge \frac{c'd}{10}\right\}.
\end{align*}
\begin{figure}[H]
\centering
\includegraphics[width=0.7\textwidth]{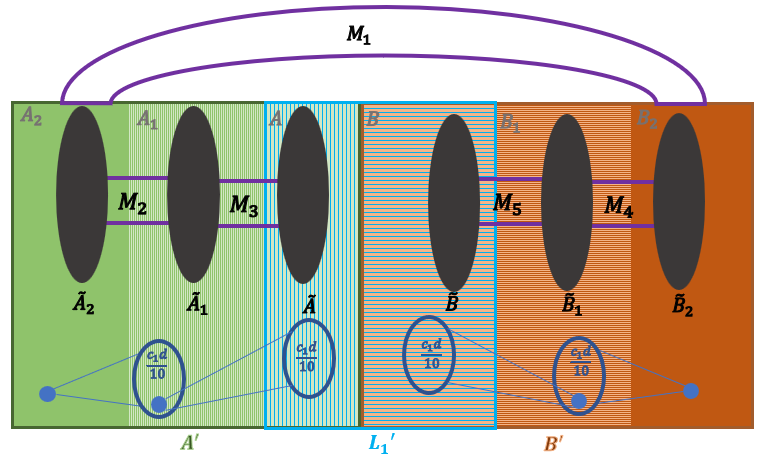}
\caption{An illustration of the sets and matchings in Lemma \ref{l: disjoint paths}. The matchings $M_1$ through $M_5$, in purple, are ordered according to the order they are constructed in the proof. In dark blue, one can see the properties of vertices in $A_2, B_2, A_1$ and $B_1$, with respect to their set of neighbours in $A_1, B_1, A$ and $B$, respectively. Observe that if the first matching $M_1$ had many endpoints in $A'\setminus A_2$ (or $B'\setminus B_2$), we could continue in the same manner with fewer matchings required.}
\label{f: matchings}
\end{figure}

Let us set $A'=A\cup A_1\cup A_2$, and $B'=B\cup B_1\cup B_2$. Observe that $V=A'\sqcup B'$. Indeed, it is clear by the definition of the sets that $A'\cap B'=\varnothing$. Suppose towards contradiction that $v\notin A'\cup B'$, and let us consider the number of vertices in $L_1'$ that are the endpoints of paths of length at most two starting from $v$. There are at most $d$ vertices in $L_1'$ that are neighbours of $v$. As for paths of length exactly two, they are of the form $vux$. If $u\in A_1$, then since $v\notin A'$, and in particular $v\notin A_2$, we have at most $\frac{c'd^2}{10}$ possible choices for the path. Similarly, if $u\in B_1$, then since $v\notin B'$, and in particular $v\notin B_2$, we have at most $\frac{c'd^2}{10}$ possible choices for the path. Finally, if $u\notin A_1\cup B_1$, since $v\notin A_1\cup B_1$, we have at most $\frac{c'd^2}{5}$ possible choices for the path. Altogether, we have at most $\frac{2c'd^2}{5}+d<c'd^2$ vertices in $L_1'$ that are at distance at most two from $v$ --- a contradiction.

Since $A'\sqcup B'=V$, by Theorem \ref{th: iso 2},
\begin{align*}
    e(A',B')\ge \frac{k}{C-1} \log_C \left(\frac{n}{k}\right)\ge c_1k\ln\left(\frac{n}{k}\right)\eqqcolon s.
\end{align*}
By Lemma \ref{l: matching}, with probability at least $1-\exp\left(-\frac{c_2s}{d}\right)$, there exists a matching of size at least $\frac{c_2s}{d}$ between $A'$ and $B'$ in $G_{p_2}$. We continue under the assumption that at least $\frac{c_2s}{3d}$ of the edges in the matching have endpoints in both $A_2$ and $B_2$, as the other cases follow more easily, with fewer matching edges required (see Figure \ref{f: matchings} for an illustration). Let us denote these endpoints of the matching by $\tilde{A}_2$ and $\tilde{B}_2$, respectively. 

Now, every $v\in A_2$, and in particular in $\tilde{A}_2$, has at least $\frac{c'd}{10}$ neighbours in $A_1$. Hence, with probability at least $1-\exp\left(-\frac{c_2s}{d}\right)$ we have a set of at least $\frac{c_2s}{3d}\cdot \frac{c'd}{10}=c_3s$ edges between $\tilde{A}_2$ and $A_1$. Thus, by Lemma \ref{l: matching}, with probability at least $1-\exp\left(-\frac{c_4s}{d}\right)$ there exists a matching of size at least $\frac{c_4s}{d}$ between $\tilde{A}_2$ and $A_1$. Denote by $\tilde{A}_2$ and $\tilde{A}_1$ the corresponding vertices in $\tilde{A}_2$ and $A_1$ of this matching. Since every $v\in A_1$, and in particular in $\tilde{A}_1$, has at least $\frac{c'd}{10}$ neighbours in $A$, with probability at least $1-\exp\left(-\frac{c_4s}{d}\right)$ there are at least $\frac{c_4s}{d}\cdot \frac{c'd}{10}=c_5s$ edges between $\tilde{A}_1$ and $A$. Once again, by Lemma \ref{l: matching}, with probability at least $1-\exp\left(-\frac{c_6s}{d}\right)$, there exists a matching of size at least $\frac{c_6s}{d}$ between $\tilde{A}_1$ and $A$. Denote the endpoints of this matching in $A$ by $\tilde{A}$. Altogether, we obtain with probability at least $1-\exp\left(-\frac{c_{7}s}{d}\right)$ a family of at least $\frac{c_7s}{d}$ vertex-disjoint paths of length three between $\tilde{B}_2\subseteq B_2$ and $\tilde{A}\subseteq A$.

Working similarly in $B'$, we define $\tilde{B}_1\subseteq B_1$ and $\tilde{B}\subseteq B$, and find with probability at least $1-\exp\left(-\frac{c_8s}{d}\right)$ a family of at least $\frac{c_8s}{d}$ vertex-disjoint paths of length at most five, starting from $\tilde{A}\subseteq A$, going through $\tilde{A}_1\subseteq A_1$, $\tilde{A}_2\subseteq A_2$, $\tilde{B}_2\subseteq B_2$, and $\tilde{B}_1\subseteq B_1$ to $\tilde{B}\subseteq B$ (see Figure \ref{f: matchings} for an illustration). Choosing $c\le c_8$ completes the proof.
\end{proof}

The following lemma is then key to the proof of Theorem \ref{th: expansion}. We effectively enumerate the number of subsets of $L'_1$ which do not expand well using a novel double-counting argument to enumerate them in terms of their boundaries, which by assumption are significantly smaller than the sets themselves. This allows us to apply the probability bound from Lemma \ref{l: disjoint paths} to conclude that \textbf{whp} all subsets of $L'_1$ expand relatively well after sprinkling.
\begin{lemma}\label{l: key lemma}
There exists a constant $c=c(\delta)>0$ such that \textbf{whp} for any $S\subseteq V(L_1')$ the following hold.
\begin{enumerate}[(a)]
    \item\label{i:direct neighbourhood} If $\frac{n}{d}\le |S|\le \frac{3\epsilon n}{2}$, then either
    \begin{align*}
        |N_{L_1'}(S)|\ge \frac{c|S|}{d\ln d},
    \end{align*}
    or there exists a family of at least $\frac{c|S|}{d}$ vertex disjoint paths of length at most five between $S$ and $V(L_1')\setminus S$ in $G_{p_2}$;
    \item\label{i:nondirect neighbourhood} 
    If $|S|=\omega(d)$ and $|S|\le \frac{3\epsilon n}{2}$, then either
    \begin{align*}
        |\partial_{L_1'}(S)|\ge \frac{c|S|\ln\left(\frac{n}{|S|}\right)}{d\ln d},   
    \end{align*}    
    or there exists a family of at least $\frac{c|S|}{d}$ vertex disjoint paths of length at most five between $S$ and $V(L_1')\setminus S$ in $G_{p_2}$.
\end{enumerate}
\end{lemma}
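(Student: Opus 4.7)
The plan is to work in the two-round exposure framework, conditioning on a typical realization of $G_{p_1}$ so that standard \textbf{whp} consequences (Theorem \ref{th: dekk22} giving $|V(L_1')|=(2\epsilon-O(\epsilon^2))n$, a Chernoff bound giving $|E(L_1')|=O(n)$, and Lemma \ref{l: high degree}) hold, and then taking a union bound over bad sets $S$ with respect to the randomness of the sprinkling $G_{p_2}$. For a fixed $S$ in the allowed range, setting $A=S$, $A'=V(L_1')\setminus S$ and $k=\min\{|A|,|A'|\}$, the assumption $|S|\le\frac{3\epsilon n}{2}$ together with $|V(L_1')|=(2\epsilon-O(\epsilon^2))n$ gives $k\ge c_1|S|$ and $\ln(n/k)\ge c_2$ for constants depending only on $\epsilon$. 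Lemma \ref{l: disjoint paths} then produces $\Omega(|S|/d)$ vertex-disjoint $A$-$A'$ paths of length at most five in $G_{p_2}$ with failure probability at most $\exp(-c_3|S|\ln(n/|S|)/d)$ for some $c_3=c_3(\delta)>0$.

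For part (b), a bad set $S$ has edge-boundary $F=\partial_{L_1'}(S)$ of size at most $u\coloneqq c|S|\ln(n/|S|)/(d\ln d)$. Since $L_1'$ is connected, $L_1'-F$ has at most $|F|+1$ components, and $S$ is a union of some of them, giving at most $2^{u+1}$ candidates for $S$ per choice of $F$. Using $\binom{|E(L_1')|}{u}\le\binom{O(n)}{u}$ to bound the number of choices of $F$, the total enumeration exponent is of order $c|S|\ln(n/|S|)/d$, which is dominated by the failure exponent once $c$ is chosen sufficiently small relative to $c_3$. A union bound over the at most $n$ possible values of $|S|$ concludes part (b).

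For part (a), a bad set $S$ has vertex-boundary $B=N_{L_1'}(S)$ of size at most $u\coloneqq c|S|/(d\ln d)$. Given $B$, the set $S$ is a union of components of $L_1'-B$, and, provided $B$ avoids the set $H$ of high-degree vertices of $L_1'$, the connectivity of $L_1'$ yields at most $|N_{L_1'}(B)|\le|B|\ln d$ components, giving at most $2^{|B|\ln d}$ candidates per $B$. Combined with the $\binom{n}{u}$ choices for $B$, the enumeration exponent is of order $c|S|/d$, comfortably dominated by $c_3|S|\ln(n/|S|)/d$ for $|S|\ge n/d$. The sets $B$ that intersect $H$ are rare (since $|H|\le n/d^4$ by Lemma \ref{l: high degree}), and a separate union bound absorbs the extra $n/d^4$ factor.

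The main obstacle is the constant-chasing and ensuring the enumeration is strictly dominated by the failure probability across the full range of $|S|$, since the two exponents differ only by logarithmic factors; taking $c$ and $\delta$ sufficiently small resolves this. A secondary subtlety is the treatment of high-degree vertices in part (a), where without Lemma \ref{l: high degree} the per-$B$ bound would deteriorate from $2^{|B|\ln d}$ to $2^{|B|d}$ and be too weak; invoking the \textbf{whp} upper bound on $|H|$ allows us to split the enumeration and carry through the argument.
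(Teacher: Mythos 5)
Your proposal mirrors the paper's argument closely: a two-round exposure, an enumeration of ``bad'' subsets of $L_1'$ via their small boundaries, and an application of Lemma~\ref{l: disjoint paths} to the sprinkled edges. The enumeration details are cosmetically different --- for part~(b) you count the edge set $F=\partial_{L_1'}(S)$ directly via $\binom{|E(L_1')|}{u}\cdot 2^{u+1}$, while the paper counts the vertex boundary $N_{L_1'}(S)$ together with the number $m\le e_2+1$ of components of $G_{p_1}[S]$ via $\binom{n}{b_1}\binom{n}{m}$ --- but the two give the same asymptotic entropy, so this is an equivalent route.

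The step that does not quite work is your handling of the high-degree vertices in part~(a). Saying that ``a separate union bound absorbs the extra $n/d^4$ factor'' does not go through as stated: the boundaries $B$ of size $u$ intersecting $H$ are not a negligible fraction of $\binom{n}{u}$, and for such a $B$ the per-$B$ count of components of $L_1'-B$ degrades to roughly $2^{|B|\ln d + d\cdot|B\cap H|}$, which your rarity argument leaves uncontrolled. What is needed instead is the aggregate bound the paper uses: since the (at most) $n/d^4$ high-degree vertices each have degree at most $d$, the edge boundary $e_1=|\partial_{L_1'}(N_{L_1'}(S))|$ is at most $\frac{n}{d^3}+b_1\ln d$, so the number of components of $L_1'\setminus N_{L_1'}(S)$ (and hence the per-boundary candidate count $2^{e_1+1}$) has exponent $\frac{n}{d^3}+b_1\ln d$, and the constraint $|S|\ge n/d$ is then precisely what guarantees $\frac{n}{d^3}\ll \frac{c|S|}{d}$, so the $2^{n/d^3}$ factor is absorbed. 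You should replace the rarity framing with this aggregate degree bound.

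One further imprecision worth flagging (which, for what it is worth, the paper's own write-up of part~(b) also glosses over): the assertion that the part~(b) enumeration exponent is ``of order $c|S|\ln(n/|S|)/d$'' holds only when $\ln(n/|S|)=O(\ln d)$; in general, with $u=\frac{c|S|\ln(n/|S|)}{d\ln d}$, one has $\ln\binom{O(n)}{u}\approx u\bigl(\ln(n/|S|)+\ln d\bigr)=\Theta\!\left(\frac{c|S|\ln(n/|S|)\bigl(\ln(n/|S|)+\ln d\bigr)}{d\ln d}\right)$, which picks up an extra factor of $\ln(n/|S|)/\ln d$ over what you claim and is then not obviously dominated by the failure exponent $\frac{c_3|S|\ln(n/|S|)}{d}$ once $\ln(n/|S|)\gg\ln d$. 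You should make explicit the regime of $|S|$ in which your comparison is valid.
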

We note that the assumption that $|S|=\omega(d)$ in \ref{l: key lemma}\ref{i:nondirect neighbourhood} can be strengthened, however it suffices for our usage and allows for a simpler proof.
\begin{proof}
We argue via two-round exposure, beginning by exposing $G_{p_1}$. By Lemma \ref{l: density}, \textbf{whp} every $v\in V(G)$ is at distance at most two from at least $c_1d^2$ vertices in $L_1'$, for some $c_1=c_1(\epsilon,\delta)>0$. Finally, by Lemma \ref{l: high degree}, \textbf{whp} there are at most $\frac{n}{d^4}$ vertices with degree larger than $\ln d$. We continue assuming that these properties hold deterministically.

We begin with part \ref{i:direct neighbourhood}. Given $\frac{n}{d}\le |S|\le \frac{3\epsilon n}{2}$, let $k\coloneqq |S|$ and let $b_1\coloneqq |N_{L_1'}(S)|$. As we aim to bound the expansion of the set $S$, we may assume that $b_1< \frac{ck}{d\ln d}$, as otherwise the claim holds. In order to facilitate a union bound argument, let us estimate the number of subsets $S$ of size $k$ in $L_1'$ such that $|N_{L_1'}(S)|=b_1$. Let $e_1 = \partial_{L_1'}(N_{L_1'}(S))$. Since there are at most $\frac{n}{d^4}$ vertices with degree larger than $\ln d$, we have that $e_1\le \frac{n}{d^3}+b_1\ln d$. Since $L_1'$ is connected, there are at most $e_1+1$ components in $L_1'\setminus N_{L_1'}(S)$. Furthermore, since $S$ has no neighbours outside $N_{L_1'}(S)$, it must be the union of components in $L_1'\setminus N_{L_1'}(S)$. Hence, the number of ways to choose such an $S$ is at most $\binom{n}{b_1}\cdot 2^{e_1+1}$. Thus, there are at most
\begin{align*}
    \sum_{b_1=1}^{\frac{ck}{d\ln d}}\binom{n}{b_1}2^{\frac{n}{d^3}+b_1\ln d+1}&\le \left(\frac{en}{\frac{ck}{d\ln d}}\right)^{\frac{ck}{d\ln d}}2^{\frac{2ck}{d}}\\
    &\le \exp\left(\frac{ck}{d\ln d}\left(\ln\left(\frac{end\ln d}{ck}\right)+2\ln d\right)\right)\\
    &\le\exp\left(\frac{2ck}{d\ln d}\left(\ln\left(\frac{n}{k}\right)+2\ln d\right)\right)\le \exp\left(\frac{6ck}{d}\right)
\end{align*}
sets $S\subseteq V(L_1')$ with $|N_{L_1'}(S)|<\frac{ck}{d\ln d}$, where we used the fact that $k\ge \frac{n}{d}$ in the first and last inequalities.

We now turn to facilitate a union bound argument for part \ref{i:nondirect neighbourhood}. Given $S\subseteq V(L_1')$ with $|S|=k\le \frac{3\epsilon n}{2}$, we may assume that $|\partial_{L_1'}(S)|<\frac{ck\ln\left(\frac{n}{k}\right)}{d\ln d}$, as otherwise the claim holds. Let us then estimate the number of sets $S$ such that $|\partial_{L_1'}(S)|<\frac{ck\ln\left(\frac{n}{k}\right)}{d\ln d}$.

Let $e_2\coloneqq|\partial_{L_1'}(S)|<\frac{ck\ln\left(\frac{n}{k}\right)}{d\ln d}$ and let $b_1\coloneqq |N_{L_1'}(S)|$ as before. If we write $m$ for the number of components in $G_{p_1}[S]$, then, since $L_1'$ is connected, $m \leq e_2+1$. Moreover, since $S$ has no neighbours outside $N_{L_1'}(S)$, it must be the union of precisely $m$ components of $L'_1 \setminus N_{L_1'}(S)$.

Hence, since $L'_1 \setminus N_{L_1'}(S)$ has at most $n$ components, the number of ways to choose such an $S$ is at most $\binom{n}{b_1}\binom{n}{m}$. Thus, since $b_1 \leq e_2$, there are at most
\begin{align*}
    \sum_{b_1=1}^{e_2}\sum_{m=1}^{e_2+1}\binom{n}{b_1}\binom{n}{m}&\le \left(\frac{en}{\frac{ck\ln\left(\frac{n}{k}\right)}{d\ln d}}\right)^{2\frac{ck\ln\left(\frac{n}{k}\right)}{d\ln d}}\le \exp\left(\frac{2ck\ln\left(\frac{n}{k}\right)}{d\ln d}\ln\left(\frac{end\ln d}{ck\ln\left(\frac{n}{k}\right)}\right)\right)\\
    &\le\exp\left(\frac{2ck}{d}\cdot\frac{\ln\left(\frac{n}{k}\right)}{\ln d}\cdot \left(\ln\left(\frac{n}{k}\right)+2\ln\left(\frac{d\ln d}{\ln\left(\frac{n}{k}\right)}\right)\right)\right)\\
    &\le \exp\left(\frac{3ck}{d}\right)
\end{align*}
sets $S\subseteq V(L_1')$ with $|\partial_{L_1'}(S)|<\frac{ck\ln\left(\frac{n}{k}\right)}{d\ln d}$.

Fix $S\subseteq V(L_1')$ with $|S|=k$. By Lemma \ref{l: disjoint paths}, with probability at least $1-\exp\left(-\frac{c_1k\ln\left(\frac{n}{k}\right)}{d}\right)$, there exists a family of at least $\frac{c_1k\ln\left(\frac{n}{k}\right)}{d}$ vertex disjoint paths of length at most five between $S$ and $V(L_1')\setminus S$ in $G_{p_2}$, where $c_1$ is the constant from Lemma \ref{l: disjoint paths}. We note that we used our assumption that every $v\in V(G)$ is at distance at most two from at least $c'd^2$ vertices in $L_1'$ in order to invoke Lemma \ref{l: disjoint paths}.

Recalling that $k\le \frac{3\epsilon n}{2}$, the probability there is a set $S$ violating the statement of part \ref{i:direct neighbourhood} is then at most
\begin{align*}
    \exp\left(\frac{6ck}{d}-\frac{c_1k\ln\left(\frac{n}{k}\right)}{d}\right)\le\exp\left(\frac{k}{d}\left(6c-c_1\right)\right).
\end{align*}
Once again, the probability that there is a set $S$ violating the statement of part \ref{i:nondirect neighbourhood} is at most
\begin{align*}
       \exp\left(\frac{3ck}{d}-\frac{c_1k}{d}\right)&=\exp\left(\frac{k}{d}\left(3c-c_1\right)\right).
\end{align*}
Under our assumption that $k\coloneqq |S|=\omega(d)$ and for $c$ small enough with respect to $c_1$, by the union bound the probability having a set $S$ violating the statement of part \ref{i:direct neighbourhood} or \ref{i:nondirect neighbourhood} is $o(1)$.
\end{proof}

\subsection{Structure of subsets in the residue}
As we mentioned, we also require some control over the typical structure of subsets in the residue $L_1 - L'_1$ and their likely expansion into the early giant after sprinkling. Let us begin with the following lemma, showing how the vertices in $L_1'$ are embedded in $L_1$, which generalises Lemma 3.2 in \cite{EKK22}. Given a vertex $v\in V(L_1')$, let $C_v$ be the set of vertices which are contained in components of $L_1-L_1'$, such that there is a vertex adjacent to $v$ in $G_2$ in these components. Also, given a subset $S\subseteq L_1'$, we denote by $C_{S}=\cup_{v\in S}C_v$.

\begin{lemma}\label{l: Kd}
There exists a constant $K_2\coloneqq K_2(C,\epsilon)>0$ such that \textbf{whp} $|C_v|\le K_2d$ for every $v\in V(L_1')$.
\end{lemma}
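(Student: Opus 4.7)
The plan is to use the two-round exposure $G_p = G_{p_1} \cup G_{p_2}$ together with Theorem~\ref{th: dekk22} applied to both $G_{p_1}$ (with effective parameter $\epsilon - \delta > 0$) and $G_p$. I would work on the high-probability event that $L_1'$ is the unique giant of $G_{p_1}$ with $|V(L_1')| = (1+o(1))y(\epsilon-\delta)n$, $L_1$ is the unique giant of $G_p$ with $|V(L_1)| = (1+o(1))y(\epsilon)n$, and all other components of $G_{p_1}$ and of $G_p$ have size $O(d)$.

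The key observation is that $C_v \cup \{v\}$ coincides with the connected component of $v$ in $G_p[(V \setminus V(L_1')) \cup \{v\}]$. Moreover, every $G_p$-edge from $v$ to a vertex in $V \setminus V(L_1')$ is necessarily a $G_{p_2}$-edge, since $v$'s $G_{p_1}$-neighbours all belong to $L_1'$. This lets us decompose $C_v$ as a union of components of $G_p[V \setminus V(L_1')]$ adjacent to $v$ in $G_p$. Each such component is either a small component of $G_p$ (of size $O(d)$ by Theorem~\ref{th: dekk22}) or a component of $L_1 - L_1'$; I would bound the size of the latter by some $K_1 d$ via a first-moment enumeration over possible clusters of small $G_{p_1}$-components joined by $G_{p_2}$-edges that lie entirely in $V \setminus V(L_1')$, using Lemma~\ref{l: trees} to count such configurations and $p_2 = \delta/d$ to make the associated probability small. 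Combining with a Chernoff-type bound on the number of $G_{p_2}$-edges from $v$ to $V \setminus V(L_1')$ (stochastically dominated by $\mathrm{Bin}(d, p_2)$ with mean $\delta$), a union bound over $v \in V(L_1')$ should then yield $|C_v| \le K_2 d$.

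The main obstacle I foresee is obtaining a uniform-over-$v$ control sharp enough to give a bound that is linear, rather than superlinear, in $d$. A naive Chernoff tail applied to the number of $G_{p_2}$-edges from $v$ to outside $L_1'$, combined with a union bound over $\Theta(n)$ vertices (and using $\log n = \Theta(d)$ since $n \le C^d$), only yields $O(\log n/\log \log n) = O(d/\log d)$ uniformly, which would give $|C_v| = O(d^2/\log d)$ rather than the claimed $O(d)$. Overcoming this likely requires a more delicate argument, either splitting the union bound into typical vertices (handled by Chernoff directly) and the at most $n/d^4$ high-$G_p$-degree vertices from Lemma~\ref{l: high degree} (handled separately and absorbing the cost via their small count), or coupling the exploration of $C_v$ with a subcritical branching process whose offspring distribution has expected value $(1-y(\epsilon))(1+\epsilon) = 1 - \epsilon + O(\epsilon^2) < 1$ for $\epsilon$ small enough. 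The exponential tail of the total progeny of this subcritical branching process is precisely what is needed to survive the union bound with a constant $K_2 = K_2(C,\epsilon)$.
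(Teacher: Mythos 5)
Your proposal correctly identifies the two‑round exposure set‑up and the fact that $C_v$ decomposes into components that are each unions of small $G_1$‑components, but the route you then take is genuinely different from the paper's, and the gap you flag is real and not repaired by the fixes you sketch. Bounding separately (i) the number of $G_{p_2}$‑edges from $v$ to $V\setminus V(L_1')$ and (ii) the size of each attached component, and then multiplying, is fundamentally lossy: as you observe, Chernoff plus a union bound over $\Theta(n)=e^{\Theta(d)}$ vertices forces (i) up to $\Theta(d/\ln d)$, and with (ii) at $\Theta(d)$ you only get $|C_v| = O(d^2/\ln d)$. The fix via Lemma~\ref{l: high degree} does not help --- it controls the $G_p$‑degree, which is a different quantity, and in any case only reduces the number of problematic vertices to $n/d^4$, for which the same union‑bound issue persists. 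The branching‑process fix is plausible in spirit, but after conditioning on $G_{p_1}$ the exploration inside $V\setminus V(L_1')$ is not a clean i.i.d.\ process (the $G_1$‑edges there are conditioned to form small components), so the "offspring mean $(1-y)(1+\epsilon)$" coupling needs a careful argument that you do not supply.

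The paper's proof avoids the count‑times‑size decomposition entirely. The crucial structural observation is that since $\hat{C}$ is a union of \emph{entire} $G_1$‑components that avoid $L_1'$, every $G$‑edge leaving $\hat{C}$ (including those to $v$) is absent from $G_1$. One then argues by first moment directly over spanning trees $T$ of $\hat{C}\cup\{v\}$ of size $k\in[K_2d+1,(K_2+K_1)d+1]$: there are at most $n(ed)^{k-1}$ such trees by Lemma~\ref{l: trees}, the $k-1$ tree edges lie in $G_2$ with probability $p^{k-1}$, and --- this is the point your proposal is missing --- by the isoperimetric inequality (Theorem~\ref{th: iso 1}) the set $V(T)\setminus\{v\}$ has at least $(k-1)\bigl(d-(C-1)\log_2(k-1)\bigr)$ boundary edges, all of which must avoid $G_1$, contributing a factor $(1-p_1)^{(k-1)(d-(C-1)\log_2(k-1))}$. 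Multiplying these (the two events are negatively correlated) and summing over $k$ gives a bound of the form $n\sum_k\exp(-\Omega(\epsilon^3 k))$, which is $o(1)$ once $K_2\ge\frac{2\ln C}{\epsilon^3}$. In short, the isoperimetric inequality applied to the empty‑$G_1$‑boundary of the single set $\hat{C}$ supplies the exponentially small factor that survives the union bound in one shot, rather than trying to win separately on the number of attached components and their sizes. You should incorporate this empty‑$G_1$‑boundary observation and the corresponding use of Theorem~\ref{th: iso 1}; without it your first‑moment bound on component sizes alone cannot control $|C_v|$.
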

\begin{proof}
Note that $G_2$ has the same distribution as $G_p$, and that $p_1=\frac{1+\epsilon-\delta+o(1)}{d}$. Furthermore, observe that by Theorem \ref{th: dekk22}, there exists a constant $K_1\coloneqq K_1(C,\epsilon)$ such that \textbf{whp} every component of $G_{p_1}$, besides $L_1'$, is of order at most $K_1d$ (although technically the $K_1$ given by Theorem \ref{th: dekk22} might depend on $\delta$, it is easy to check from the proof that, since $\delta \ll \epsilon$, we may choose $K_1$ only as a function of $\epsilon$ and $C$). 

Suppose that there is some $v\in V(L_1')$ such that $|C_v|\ge K_2d$. Note that $C_v\cup \{v\}$ is connected in $G_2$, and that $C_v$ is the disjoint union of some sets $\left\{C_1,\ldots, C_r\right\}$ where $C_i$ is the vertex set of some component of $G_1$, each of which has order at most $K_1d$. It follows there must be some subset $\hat{C}\subseteq C_v$ such that $\hat{C}\cup\{v\}$ is connected in $G_2$, $\hat{C}$ is the union of some subset of $\{C_1,\ldots, C_r\}$ and $K_2d\le |\hat{C}|\le (K_2+K_1)d$.

In particular, there is some spanning tree $T$ of $\hat{C}\cup\{v\}$, all of whose edges are in $G_2$, and no edge in the edge-boundary of $V(T)\setminus\{v\}$ is present in $G_1$. 

Let us bound the probability that such a tree of order $k$ exists in $G_2$ for each $K_2d+1\le k \le (K_2+K_1)d+1$. By Lemma \ref{l: trees}, there are at most $n(ed)^{k-1}$ such trees. A spanning tree $T$ has $k-1$ edges in $G_2$, which happens with probability at most $p^{k-1}$. Furthermore, since $|V(T)\setminus\{v\}|=k-1$, by Theorem \ref{th: iso 1} there are at least $(k-1)\left(d-(C-1)\log_2(k-1)\right)$ edges in the edge-boundary of $V(T)\setminus \{v\}$, none of which are in $G_1$, which happens with probability at most ${(1-p_1)^{(k-1)\left(d-(C-1)\log_2(k-1)\right)}}$. Whilst these two events are not necessarily independent, they are negatively correlated. Thus, by the union bound, the probability that such a tree of order $k$ exists in $G_2$ is at most
\begin{align*}
    n(ed)^{k-1}p^{k-1}(1-p_1)^{(k-1)\left(d-(C-1)\log_2(k-1)\right)}.
\end{align*}

Therefore, the probability that such a tree exists for $k\in I\coloneqq [K_2d+1, (K_2+K_1)d+1]$ is at most
\begin{align*}
    n\sum_{k\in I}\exp\left((k-1)\left(1+\ln(1+\epsilon)-(1+\epsilon-2\epsilon^3)\right)\right)\le n\sum_{k\in I}\exp\left(-\epsilon^3(k-1)\right)=o(1),
\end{align*}
where we used the fact that $\ln(1+\epsilon)\le \epsilon-3\epsilon^3$ for small enough $\epsilon>0$, and we assume that $K_2\ge \frac{2\ln C}{\epsilon^3}$, recalling that $n\le C^t\le C^d$ and hence $\ln n\le \ln C \cdot d$.
\end{proof}

In order to obtain our results, we will require further information about the likely expansion of subsets in the residue into the early giant. We will require the following density lemma.
\begin{lemma}[Lemma 4.6 of \cite{DEKK22}, rephrased]\label{l: density2}
There exists a constant $c_2>0$ such that for any fixed constants $K,c_1>0$, \textbf{whp} every subset $M \subseteq V(G)$, with $|M|=Kd$ and $G[M]$ connected, contains at most $c_1d$ vertices $v \in M$ such that $|N_G(v)\cap V(L_1')|<c_2d$.
\end{lemma}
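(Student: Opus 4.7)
The plan is to argue via a union bound over connected subsets $M \subseteq V(G)$ of size $Kd$, in a manner reminiscent of the proof of Lemma \ref{l: Kd}. The key observation is that if $M$ contains many bad vertices, then there is a relatively large region $W \subseteq V(G)\setminus V(L_1')$ which is sealed off from $V(L_1')$ in $G_{p_1}$, and this is an event whose probability can be controlled by the isoperimetric inequality of Theorem \ref{th: iso 1}.

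First I would enumerate the connected candidate subsets. By Lemma \ref{l: trees}, the number of connected subsets $M$ of size $Kd$ is at most $n(ed)^{Kd-1}$. Fix such an $M$ and a candidate subset $B\subseteq M$ of size $c_1 d$ of bad vertices; the number of choices is at most $\binom{Kd}{c_1 d} \le 2^{Kd}$. If every $v\in B$ is bad, then each such $v$ has at least $(1-c_2)d$ $G$-neighbours outside $V(L_1')$, and so the set $U=\bigcup_{v\in B}(N_G(v)\setminus V(L_1'))$ satisfies $|U|\ge c_1(1-c_2)d$ (since each vertex is a $G$-neighbour of at most $d$ vertices of $B$). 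By Theorem \ref{th: dekk22}, every vertex of $U$ lies in a component of $G_{p_1}$ of order at most $K_1 d$, so $U$ is contained in a union $W$ of such components, with $W\subseteq V(G)\setminus V(L_1')$, $|W|$ polynomially bounded in $d$, and no edges of $G_{p_1}$ between $W$ and $V(L_1')$.

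Next I would extract a strong probability bound from the isoperimetric inequality. Since $|W|$ is polynomially bounded in $d$ (and thus $\log_2|W|=O(\log d)$), Theorem \ref{th: iso 1} gives
\begin{align*}
   |\partial W|\ge |W|\bigl(d-(C-1)\log_2|W|\bigr) = \Omega(|W|\,d).
\end{align*}
A suitable density argument, invoking Lemma \ref{l: density}, shows that a constant fraction of these boundary edges go to $V(L_1')$; all such edges must be absent in $G_{p_1}$, an event of probability at most $(1-p_1)^{\Omega(|W|d)}=\exp(-\Omega(|W|))$. Since $|W|\ge|U|\ge c_1(1-c_2)d$, this is already exponentially small in $d$; together with an enumeration of possible shapes of $W$ (again via a spanning forest using Lemma \ref{l: trees}) and of $M$ and $B$, the resulting bound decays fast enough, provided $c_2$ is chosen small enough in terms of $\epsilon, C, K$ and $c_1$.

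The main obstacle is the balancing of the three union bounds (over $M$, $B$, and $W$) against the probability bound: the enumeration of the sealed-off region $W$ is delicate because each small component can contain up to $K_1 d$ vertices and $W$ itself may have order as large as $\Theta(d^3)$, while the isoperimetric bound on $|\partial W|$ must be translated into a bound on the number of boundary edges reaching $V(L_1')$ specifically. As in the proof of Lemma \ref{l: Kd}, the resulting sum over possible orders of $W$ is geometric and dominated by the smallest term, so a careful choice of $c_2$ makes the total probability $o(1)$, yielding the claim by a union bound.
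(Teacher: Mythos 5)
The paper does not actually prove this lemma in the text; it is cited directly from~\cite{DEKK22}, so there is no in-paper proof to compare against. Evaluating your plan on its own merits, the central accounting does not close. The sealed-off set $W$ satisfies only $|W|\ge |U| = \Theta(d)$ (the multiplicity argument gives $|U|\ge (1-c_2)d$ when $c_1\le 1$, and $\ge c_1(1-c_2)d$ when $c_1 \ge 1$; either way it is $\Theta(d)$), so $|\partial_G W|=\Theta(d^2)$ and the probability that all these boundary edges are absent in $G_{p_1}$ is only $(1-p_1)^{\Theta(d^2)} = \exp(-\Theta(d))$. Against this you must pay $n(ed)^{Kd-1}=\exp\left(\Theta(Kd\ln d)\right)$ for enumerating $M$ via Lemma~\ref{l: trees}. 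The crucial difference from Lemma~\ref{l: Kd} is that there the $(ed)^{k-1}$ tree count is cancelled by the factor $p^{k-1}$ for the spanning tree actually appearing in $G_2$; here $M$ is only required to be connected in the \emph{host} graph $G$, not in $G_p$, so there is no compensating $p^{Kd-1}$, and the final bound is of the form $\exp\left(\Theta(Kd\ln d)-\Theta(d)\right)$, which does not tend to zero for any $c_2$. Being cleverer about enumerating $W$ cannot rescue this, as the damage is already done by the $M$-enumeration. To close the union bound one needs the probability that a fixed set $B$ of $c_1 d$ vertices is entirely bad to be $\exp(-\omega(d\ln d))$; morally this should come from exploiting the near-independence of the $c_1 d$ events $\{v\text{ bad}\}$ (each of probability $\exp(-\Theta(d))$, giving $\exp(-\Theta(d^2))$), not from a single sealed-off region of order $d$.

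There are two secondary issues. First, the appeal to Lemma~\ref{l: density} is both off-target and unnecessary: that lemma controls the number of $L_1'$-vertices at distance at most \emph{two}, not the fraction of direct $G$-neighbours in $L_1'$ (which is exactly what the present lemma is about, so using it would risk circularity). It is also unnecessary, because $W$ is a union of entire components of $G_{p_1}$, none of which is $L_1'$, so \emph{every} edge in $\partial_G W$ is automatically absent in $G_{p_1}$, whether it goes to $V(L_1')$ or to another small component. Second, the statement quantifies $c_2$ before $K$ and $c_1$, whereas you choose $c_2$ depending on $K$ and $c_1$; a correct argument should produce a single $c_2$ (depending only on $\epsilon$ and $C$) that works for all $K,c_1$.
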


We will make use of the following probabilistic lemma, which utilises Lemma \ref{l: tree decomposition}.
\begin{lemma}\label{l: middle sets lemma}
There exist positive constants $K, K'\coloneqq K'(K)$ and $c\coloneqq c(\delta)$ such that the following holds. Let $S\subseteq V(L_1')$ and $B\subseteq V(G)\setminus V(L_1')$ be such that $|S\cup B|\ge Kd$ and $G[S\cup B]$ is connected and $|B|\ge K'|S|$. Then, there exists a matching in $G_{p_2}$ of size at least $c|B|$ between $B$ and $V(L_1')\setminus S$ with probability at least $1-\exp\left(-c|B|\right)$.
\end{lemma}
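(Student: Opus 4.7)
The plan is to show that in $G$ the edge set $F\subseteq E(G)$ between $B$ and $V(L_1')\setminus S$ satisfies $|F|=\Omega(d|B|)$, and then to apply Lemma \ref{l: matching} with $q=p_2=\delta/d$ to produce the required matching in $F_{p_2}\subseteq G_{p_2}$. The two key inputs are the density result of Lemma \ref{l: density2}, which bounds the number of vertices with few neighbours in $V(L_1')$, and the hypothesis $|B|\ge K'|S|$, which ensures that the ``loss'' from edges going into $S$ is negligible.

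Call $v\in V(G)$ \emph{bad} if $|N_G(v)\cap V(L_1')|<c_2 d$, where $c_2$ is the constant from Lemma \ref{l: density2}. Fix parameters $K_1$ large and $c_1$ small (both to be determined), and condition on the \textbf{whp} event of Lemma \ref{l: density2} with these parameters. The first step is to bootstrap Lemma \ref{l: density2} from size exactly $K_1 d$ to arbitrary connected sets of size at least $K_1 d$. We take a spanning tree of $G[S\cup B]$ and apply Lemma \ref{l: tree decomposition} with $\ell=K_1 d/3$, producing $s=O\left(|S\cup B|/(K_1 d)\right)$ connected parts $A_i$ with $K_1 d/3\le|A_i|\le K_1 d$. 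Since $K\ge K_1$ and $G[S\cup B]$ is connected, each $A_i$ can be extended within $S\cup B$ to a connected set $M_i$ with $|M_i|=K_1 d$; applying Lemma \ref{l: density2} to each $M_i$ gives at most $c_1 d$ bad vertices in $A_i$. Summing over $i$ and using $|B|\ge|S\cup B|/2$ (which holds since $|B|\ge K'|S|$ for $K'\ge 1$), the total number of bad vertices in $B$ is at most $O(c_1/K_1)\cdot|B|$, and this is at most $|B|/4$ provided $K_1/c_1$ is chosen large enough.

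Next, a direct edge count gives $e(B,S)\le d|S|\le d|B|/K'$, and thus at most $2|B|/(c_2 K')$ vertices of $B$ have at least $(c_2/2)d$ neighbours in $S$; choosing $K'\ge 8/c_2$ bounds this by $|B|/4$. Let $B'\subseteq B$ be the set of vertices which are neither bad nor have many neighbours in $S$. Then $|B'|\ge|B|/2$, and every $v\in B'$ satisfies $|N_G(v)\cap(V(L_1')\setminus S)|\ge c_2 d-(c_2/2)d=(c_2/2)d$. Hence $|F|\ge (c_2/4)d|B|$, and since $|B|\ge Kd/2$, we have $|F|\ge\Omega(d^2)$, so Lemma \ref{l: matching} applies with $s=(c_2/4)d|B|$ and yields a matching in $F_{p_2}$ of size at least $c|B|$ with probability at least $1-\exp(-c|B|)$, for a suitable constant $c=c(\delta)>0$.

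The main obstacle is the bootstrap step in the second paragraph, and the key technical point enabling it is that Lemma \ref{l: tree decomposition} gives parts of sizes within a factor of three of each other, independent of the maximum degree of the tree. This keeps the number of parts proportional to $|S\cup B|/d$ rather than to $|S\cup B|$, which in turn is what allows the bad vertices contributed by each part to sum to a small constant fraction of $|B|$ rather than overwhelming it.
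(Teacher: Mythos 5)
Your proof is correct and follows the same overall strategy as the paper's: take a spanning tree of $G[S\cup B]$, decompose it via Lemma~\ref{l: tree decomposition} into connected pieces of size $\Theta(Kd)$, apply the density result Lemma~\ref{l: density2} piece by piece to show that $e_G(B, V(L_1')\setminus S)=\Omega(d|B|)$, and finish with Lemma~\ref{l: matching}. There are two technical differences worth noting, both of which work in your favour. First, Lemma~\ref{l: density2} is stated for connected sets of size \emph{exactly} $Kd$; the paper applies it to the pieces $A_i$ of size between $Kd$ and $3Kd$ without comment, whereas you side-step this cleanly by choosing $\ell=K_1d/3$ (so each $|A_i|\le K_1 d$) and extending each $A_i$ inside $S\cup B$ to a connected set $M_i$ of size exactly $K_1 d$, which is legitimate since the ``bad'' property of a vertex depends only on the vertex and not on the surrounding set. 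Second, the paper works at the level of edge counts on each piece, using Theorem~\ref{th: iso 1} to bound the internal edge count $e(A_i)$ so that it can subtract the edges that stay inside $A_i$; you instead work at the level of vertices, directly bounding the number of bad vertices in $B$ and the number of vertices of $B$ with many $S$-neighbours (the latter via $e(B,S)\le d|S|\le d|B|/K'$), which avoids Theorem~\ref{th: iso 1} entirely. Both arguments rely on the tight size control in Lemma~\ref{l: tree decomposition} and the near-disjointness of the pieces to keep the number of parts $s=O(|S\cup B|/(Kd))$, which is exactly what makes the bad-vertex count a small fraction of $|B|$.
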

\begin{proof}
By Lemma \ref{l: density2}, there exists a constant $c_2>0$ such that for any fixed constants $K,c_1>0$, \textbf{whp} every subset $M \subseteq V(G)$, with $|M|=Kd$ and $G[M]$ connected, contains at most $c_1d$ vertices $v \in M$ such that $|N_G(v)\cap V(L_1')|<c_2d$. Let us fix some $K\gg c_1$ and continue assuming the above holds deterministically for the corresponding $c_2$.

Since $G[S\cup B]$ is connected, it has a spanning tree $T$. By Lemma \ref{l: tree decomposition}, applied with $\ell = Kd$, there exist subsets $A_1,\ldots, A_s\subseteq V(T)$ satisfying properties \ref{i: partition}--\ref{i: sizes} of that lemma. In particular, since for all $i\in [s]$ we have $Kd \leq |A_i|\le 3Kd$, by Theorem \ref{th: iso 1} we have that $e(A_i) \le (C-1)3Kd\log_2(3Kd)\le 6CKd\log_2d$. Thus, by our assumption, for all $i\in [s]$ we have that 
\begin{align*}
    e_G(A_i, L_1'\setminus A_i)\ge (Kd-c_1d)c_2d-12CKd\log_2d.
\end{align*} 
Thus, defining $\hat{A}_i\coloneqq A_i\setminus \left(\bigcup_{j\in ([s]\setminus \{i\})} A_j\right)$, we have that $e(\hat{A}_i, L_1'\setminus \hat{A}_i)\ge e(A_i, L_1'\setminus A_i)-d$, and the edge sets $E(\hat{A}_1, L_1'\setminus\hat{A}_1), \ldots, E(\hat{A}_s, L_1'\setminus\hat{A}_s)$ are disjoint. Hence, since $K$ is sufficiently large with respect to $c_1$, we can choose $c_3\coloneqq c_3(c_1,c_2)>0$ small enough such that
\begin{align*}
    e\left(S\cup B, L_1'\setminus (S\cup B)\right)\ge \frac{|S|+|B|}{3Kd}\left((Kd-c_1d)c_2d-12CKd\log_2d\right)-\frac{|S|+|B|}{Kd}\cdot d\ge \left(|S|+|B|\right)c_3d.
\end{align*}
Therefore, as long as $K' \coloneqq K'(c_1,c_2)$ is large enough, there exists $c_4 \coloneqq c_4(c_1,c_2) > 0$ such that
\begin{align*}
    e(B, L_1'\setminus S)\ge |B|c_3d-2|S|d\ge \left(c_3 - \frac{2}{K'} \right)|B|d  \ge c_4|B|d.
\end{align*}
Thus, by Lemma \ref{l: matching}, there exists a constant $c(\delta)>0$ such that with probability at least $1-\exp\left(-c|B|\right)$ there exists a matching $M$ in $G_{p_2}$ of size at least $c|B|$ between $B$ and $L_1'\setminus S$.
\end{proof}

From Lemmas \ref{l: middle sets lemma} and \ref{l: density2}, we can derive the following statement, complementing Lemma \ref{l: key lemma}. 
\begin{lemma}\label{l: key lemma 2}
There exist constants $K, K', c>0$ such that \textbf{whp}, for every $S_1\subseteq V(L_1'), S_1\neq \varnothing$, and for every $S_2\subseteq C_{S_1}$ such that $|S_2|\ge K'|S_1|$, $|S_1\cup S_2|\ge Kd$ and $G[S_1\cup S_2]$ connected, the following holds. Either
\begin{align*}
    |N_{L_1'}(S_1)|\ge \frac{c|S_2|}{d}, \quad \text{or} \quad |N_{G_2}(S_1\cup S_2)|\ge \frac{c|S_2|}{d}.
\end{align*}
\end{lemma}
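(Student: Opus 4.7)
The plan is to perform two-round exposure, apply Lemma \ref{l: middle sets lemma} to each candidate pair $(S_1,S_2)$, and close by a union bound over the sprinkling $G_{p_2}$. First I would condition on the intersection of the whp events coming from Theorem \ref{th: dekk22} (so that $L_1'$ exists), Lemma \ref{l: Kd} (so that $|C_v|\le K_2 d$ for every $v\in V(L_1')$), and Lemma \ref{l: density2} (which supplies the deterministic input needed to invoke Lemma \ref{l: middle sets lemma}). All subsequent randomness lies in $G_{p_2}$.

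For a fixed valid pair $(S_1,S_2)$, the hypotheses of Lemma \ref{l: middle sets lemma} with $S=S_1$ and $B=S_2$ — namely $|S_1\cup S_2|\ge Kd$, $G[S_1\cup S_2]$ connected and $|S_2|\ge K'|S_1|$ — are exactly those assumed here, so applying it produces, with probability at least $1-\exp(-c|S_2|)$ over $G_{p_2}$, a matching $M$ in $G_{p_2}$ of size $c|S_2|$ between $S_2$ and $V(L_1')\setminus S_1$. The deterministic part is then finished by a dichotomy on the $c|S_2|$ endpoints of $M$ in $V(L_1')\setminus S_1$: either at least half lie in $N_{L_1'}(S_1)$ (giving $|N_{L_1'}(S_1)|\ge c|S_2|/2$), or the other half lie in $V(L_1')\setminus(S_1\cup N_{L_1'}(S_1))$, and since these endpoints are in $V(L_1')$ (and hence disjoint from $S_2\subseteq V(G)\setminus V(L_1')$), are outside $S_1$, and are adjacent to $S_2$ via $G_{p_2}\subseteq G_2$-edges, they contribute to $N_{G_2}(S_1\cup S_2)$ and give $|N_{G_2}(S_1\cup S_2)|\ge c|S_2|/2$. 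In either case the claim holds with factor $c/2$ in place of $c/d$.

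The main obstacle will be the union bound over all valid pairs. A naive enumeration via connectedness of $G[S_1\cup S_2]$ and Lemma \ref{l: trees} gives at most $n(ed)^{k-1}$ candidate sets of size $k\coloneqq|S_1|+|S_2|$, with the partition into $S_1$ and $S_2$ forced by $V(L_1')$, and the enumerative cost $\exp\bigl(\Theta(k\ln d)\bigr)$ is not absorbed by $\exp(-c|S_2|)$ for constant $c$. To gain the needed factor, I would apply Lemma \ref{l: tree decomposition} to a spanning tree of $G[S_1\cup S_2]$ at scale $\ell=\Theta(d)$, reducing the enumeration to that of $\Theta(k/d)$ pieces each of size $\Theta(d)$, which enumerates at cost $\exp\bigl(\Theta((k/d)\cdot d\ln d)\bigr)=\exp\bigl(\Theta(k\ln d / K)\bigr)$ for a suitably large constant $K$. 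Combined with the confining bound $|S_2|\le K_2 d|S_1|$ from Lemma \ref{l: Kd} and the hypothesis $|S_2|\ge K'|S_1|$, which together restrict the relevant ratio $|S_2|/|S_1|$ to the band $[K',K_2 d]$, one may then choose $K,K'$ sufficiently large and $c$ sufficiently small in $\delta$ to make the total sum $o(1)$. The weakened factor $1/d$ in the conclusion (compared to the $1/2$ obtained deterministically above) is precisely the slack that makes this balancing work.
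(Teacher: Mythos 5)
Your overall framework --- two-round exposure, conditioning on the whp events from Lemmas \ref{l: Kd} and \ref{l: density2}, invoking Lemma \ref{l: middle sets lemma} per candidate pair, then union-bounding over $G_{p_2}$ --- matches the paper's scaffolding, and your deterministic conclusion from a matching $M$ is fine (in fact simpler than you state: since $S_2\cap V(L_1')=\varnothing$, \emph{every} $M$-endpoint in $V(L_1')\setminus S_1$ lies outside $S_1\cup S_2$ and is $G_2$-adjacent to $S_2$, so all $\Omega(|S_2|)$ of them land in $N_{G_2}(S_1\cup S_2)$ and no dichotomy is needed).

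The union bound is where the proposal breaks, and this is exactly the paper's key new idea, which is absent from your argument. The tree-decomposition enumeration you propose does not save a factor: decomposing a spanning tree of $S_1\cup S_2$ into $\Theta(k/(Kd))$ connected pieces of size $\Theta(Kd)$ still requires, via Lemma \ref{l: trees}, roughly $(ed)^{\Theta(Kd)}$ choices per piece, so the product over pieces remains $\exp\bigl(\Theta(k\ln d)\bigr)$; the identity $\exp\bigl(\Theta((k/d)\cdot d\ln d)\bigr)=\exp\bigl(\Theta(k\ln d/K)\bigr)$ you assert is false, as $(k/d)\cdot d\ln d = k\ln d$ independently of $K$, and $\exp\bigl(\Theta(k\ln d)\bigr)$ is not absorbed by $\exp(-c'|S_2|)=\exp(-\Theta(k))$. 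The paper instead enumerates not the sets but their \emph{boundaries}, which the (assumed) failure of the conclusion forces to be small: if $b_1\coloneqq|N_{L_1'}(S_1)|<ck_2/d$, then, since $L_1'$ is connected, $S_1$ is a union of components of $L_1'$ minus this boundary, so $S_1$ is determined by choosing $N_{L_1'}(S_1)$ and a subset of the at most $db_1+1$ components, at cost $\binom{n}{b_1}2^{db_1+1}=\exp\bigl(O(ck_2)\bigr)$; similarly, choosing a boundary set of size $b_2\coloneqq|N_{G_1'}(S_1\cup S_2)|<ck_2/d$ determines $S_1\cup S_2$ as the component of $S_1$ in $G_1'$ minus this boundary, again at cost $\exp\bigl(O(ck_2)\bigr)$. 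Here $G_1'$ is the intermediate graph obtained by exposing, besides $G_1$, the $G_{p_2}$-edges inside or touching the residue, which must be done \emph{before} applying the probability bound of Lemma \ref{l: middle sets lemma} (which uses only the remaining unexposed edges between $C_{S_1}$ and $V(L_1')\setminus S_1$); your proposal does not address this exposure order. With $c$ small in terms of $C$ and the constant from Lemma \ref{l: middle sets lemma}, both enumeration factors are absorbed by $\exp(-c'k_2)$. This boundary-based double counting is the trick highlighted in Section \ref{outline}; the tree-decomposition route is not a substitute for it.
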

\begin{proof}
We begin by exposing $G_{p_1}$, and let us fix $\varnothing \neq S_1\subseteq V(L_1')$. Let us now expose all the edges in $G_{p_2}$ which are either inside $V(G)\setminus V(L_1')$ or lie between $S_1$ and $V(G)\setminus V(L_1')$. Denote by $G_1'$ the graph $G_1$ together with these edges, noting that $G_1 \subseteq G'_1 \subseteq G_2$ and that $G'_1$ determines $C_{S_1}$.

Let us choose $c_1>0$ a small enough constant, and choose $K$ a sufficiently large constant. Then by Lemma \ref{l: density2} there exists $c_2>0$ such that \textbf{whp} every connected subset $M\subseteq V(G)$ of size $Kd$ has at most $c_1d$ vertices with less than $c_2d$ neighbours in $L_1'$. Furthermore, by Lemma \ref{l: middle sets lemma} there exist constants $K',c'>0$ (from Lemma \ref{l: density2}) that the conclusion of the lemma holds for $K,c_1,c_2$ and $S_1$, noting that the event in the lemma depends only on edges in $G_{p_2}$ between $C_{S_1}$ and $V(L_1')\setminus S_1$, which we have not yet exposed. We further note that we may choose $K'$ sufficiently large. We continue assuming these properties hold deterministically.

Let us fix $S_2\subseteq C_{S_1}$ satisfying the conditions of the lemma and let $k_1\coloneqq |S_1|$ and $k_2\coloneqq |S_2|$. By Lemma \ref{l: middle sets lemma} the probability that $S_2$ has less than $c'|S_2|$ neighbours in $V(L_1')\setminus S_1$ in $G_{p_2}$ is at most $\exp\left(-c'|S_2|\right)$. Furthermore, the event that $S_2$ has at least $c'|S_2|$ neighbours in $V(L_1)\setminus S_1$ in $G_{p_2}$ clearly implies that $|N_{G_2}(S_1\cup S_2)|\ge \frac{c|S_2|}{d}$, for any constant $c>0$.

Let us now facilitate a union bound argument. Let us choose $c\coloneqq c(C,c')$ sufficiently small and suppose that $b_1\coloneqq |N_{L_1'}(S_1)|<\frac{ck_2}{d}$, as otherwise the claim holds. Let us further fix $k_2$ for now. Let us write $m$ for the number of components in $G_{1}\setminus N_{L_1'}(S_1)$. Since $L_1'$ is connected and $G$ is $d$-regular, we have $m\le d\cdot b_1+1$. 

Hence, since $S_1$ has no neighbours in $G_1$ outside $N_{L_1'}(S_1)$, it must be the union of some components of $G_{1}\setminus N_{L_1'}(S_1)$, and so the number of ways to choose such an $S_1$ is at most $\binom{n}{b_1}2^{m}$. Thus, there are at most
\begin{align*}
    \sum_{b_1=1}^{\frac{ck_2}{d}}\sum_{m=1}^{ck_2}\binom{n}{b_1}2^{m}
    &\le \left(\frac{en}{\frac{ck_2}{d}}\right)^{\frac{ck_2}{d}}\cdot2^{ck_2+1}\le \exp\left(\frac{ck_2}{d}\left(\ln\left(\frac{end}{ck_2}\right)+2d\right)\right)\le \exp\left(5\ln C\cdot ck_2\right)
\end{align*}
sets $S_1\subseteq V(L_1')$ with $|N_{L_1'}(S_1)|<\frac{ck_2}{d}$, where we used the assumption that $\ln n\le \ln C \cdot d$ and that $k_2\ge d$, since we may choose $K\ge 2$ and $K'$ large enough.

Now, let us consider the number of ways to choose $S_2\subseteq C_{S_1}$, noting that having determined $S_1$, choosing $S_1\cup S_2$ determines $S_2$. We may assume that $b_2\coloneqq|N_{G_1'}(S_1\cup S_2)|\le \frac{ck_2}{d}$, since $G_1'\subseteq G_2$. Since $G_1'[S_1\cup S_2]$ is connected, and has all its neighbours in $N_{G_1'}(S_1\cup S_2)$, exactly one of the at most $n$ components in $G_1'\setminus N_{G_1'}(S_1\cup S_2)$ is $S_1\cup S_2$. Since $S_1$ is fixed, we can identify this component. Hence, the number of ways to choose $S_1\cup S_2$ with $|N_{G_1'}(S_1\cup S_2)|\le \frac{ck_2}{d}$ is at most the number of ways to choose a set of size at most $\frac{ck_2}{d}$ in $V(G_1')$. 
That is at most
\begin{align*}
   \sum_{b_2=1}^{\frac{ck_2}{d}}\binom{n}{b_2}\le \left(\frac{en}{\frac{ck_2}{d}}\right)^{\frac{ck_2}{d}}\le \exp\left(4\ln C\cdot ck_2\right).
\end{align*}

Therefore, for fixed $k_2$, the probability of an event violating the statement of the lemma is at most
\begin{align*}
    \exp\left(9\ln C\cdot ck_2\right)\exp\left(-c'k_2\right)=o(1/n),
\end{align*}
for $c$ small enough in terms of $C$ and $c'$. Union bound over the at most $n$ choices of $k_2$ completes the proof.
\end{proof}

\subsection{Proof of Theorems \ref{th: connected expansion}\ref{i:medium} and \ref{th: expansion}\ref{i:large}}
\begin{proof}[Proof of Theorem \ref{th: connected expansion}\ref{i:medium} and \ref{th: expansion}\ref{i:large}]
Let $S_1=S\cap V(L_1')$ and $S_2=S\cap \left(V(L_1)\setminus V(L_1')\right)$. Let $c_{\ref{l: key lemma}}$ be the constant whose existence is asserted in Lemma \ref{l: key lemma}, and let $K_{\ref{l: key lemma 2}}, K'_{\ref{l: key lemma 2}}$ and $c_{\ref{l: key lemma 2}}$ be the constants whose existence is asserted in Lemma \ref{l: key lemma 2}. Let $c>0$ be sufficiently small in terms of $c_{\ref{l: key lemma}}$, $K'_{\ref{l: key lemma 2}}$ and $c_{\ref{l: key lemma 2}}$.
\begin{enumerate}
    \item[\textit{\ref{th: connected expansion}\ref{i:medium}}] Recall that we assume that $K_{\ref{l: key lemma 2}}d \leq n^{\epsilon^5}\le |S|\le \frac{3\epsilon n}{2}$ and $G_p[S]$ is connected.

   Suppose $|S_1|\ge \frac{|S_2|}{K'_{\ref{l: key lemma 2}}}$. Then, $|S_1|=\Omega(d\ln d)=\omega(d)$ and so by Lemma \ref{l: key lemma}\ref{i:nondirect neighbourhood} \textbf{whp} either $$|\partial_{L_1'}(S_1)|\ge \frac{c_{\ref{l: key lemma}}|S_1|\ln\left(\frac{n}{|S_1|}\right)}{d\ln d}\ge\frac{c|S|\ln\left(\frac{n}{k}\right)}{d\ln d},$$ or there is a family of at least $\frac{c_{\ref{l: key lemma}}|S_1|}{d}\ge \frac{c|S|}{d}$ vertex-disjoint paths from $S_1$ to $V(L_1')\setminus S_1\subseteq V(L_1)\setminus S$. However, since each such path contributes a unique vertex to the neighbourhood of $S$ in $V(L_1)$ (the first vertex along the path which is not in $S$), in the latter case $|N_{G_p}(S)|\ge \frac{c|S|}{d}$, and so the result follows.

    Otherwise, $|S_2|\ge K'_{\ref{l: key lemma 2}}|S_1|$ and so by Lemma \ref{l: key lemma 2} \textbf{whp} either $|N_{L_1'}(S_1)|\ge \frac{c_{\ref{l: key lemma 2}}|S_2|}{d}$, or $|N_{L_1}(S)|\ge \frac{c_{\ref{l: key lemma 2}}|S_2|}{d}$. In the first case, $|\partial_{L_1}(S)|\ge |N_{L_1'}(S_1)|\ge \frac{c|S|}{d}$ and, similarly to before, in the second case $|\partial_{L_1}(S)|\ge |N_{L_1}(S)|\ge \frac{c|S|}{d}$. 

    \item[\textit{\ref{th: expansion}\ref{i:large}}] We now assume that $K_{\ref{l: key lemma 2}}d \leq \epsilon ^2 n\le |S|\le \frac{3\epsilon n}{2}$. Note that, since $\big||V(L_1)|-|V(L_1')|\big|\le 4\epsilon^3n$, it follows that $|S_1|\ge \frac{2|S|}{3}$. Thus, by Lemma \ref{l: key lemma}\ref{i:direct neighbourhood}, \textbf{whp} either $$|N_{L_1'}(S_1)|\ge \frac{c_{\ref{l: key lemma}}|S_1|}{d\ln d}\ge \frac{c|S|}{d\ln d},$$ or there is a family of at least $\frac{c_{\ref{l: key lemma}}|S_1|}{d}\ge\frac{c|S|}{d}$ vertex-disjoint paths from $S_1$ to $V(L_1')\setminus S_1 \subseteq V(L_1) \setminus S$, and each such path contributes a unique vertex to the neighbourhood of $S$ in $L_1$. As before, in either case $|N_{G_p}(S)|\ge \frac{c|S|}{d\ln d}$. 
\end{enumerate}
\end{proof}

The proof of Theorem \ref{th: expander} will follow from key ideas from \cite{K19} together with our expansion result on large sets (Theorem \ref{th: expansion}\ref{i:large}).
\begin{proof}[Proof of Theorem \ref{th: expander}]
Let $c$ be the constant whose existence is asserted in Theorem \ref{th: connected expansion}. Let $M\subseteq V(L_1)$ be a maximal set such that $|M|\le \frac{\epsilon n}{10}$ and $|N_{G_p}(M)|\le \frac{c|M|}{d\ln d}$. Let $H=L_1- M$. Assume that there is some subset $B\subseteq V(H)$ such that $|B|\le \frac{|V(H)|}{2}$ and $|N_{H}(B)|\le \frac{c|B|}{d\ln d}$. Then,
\begin{align*}
    |N_{G_p}(M\cup B)| \le |N_{G_p}(M)| + |N_{H}(B)| <\frac{c|M|}{d\ln d}+\frac{c|B|}{d\ln d}=\frac{c|M\cup B|}{d\ln d}.
\end{align*}
Thus, by the maximality of $M$, we obtain that $|M\cup B|\ge \frac{\epsilon n}{10}$. However, by Theorem \ref{th: expansion}\ref{i:large}, every subset $S\subseteq V(L_1)$ with $\epsilon^2n\le |S|\le \frac{3\epsilon n}{2}$ has $|N_{G_p}(S)|\ge \frac{c|S|}{d\ln d}$. Hence, $|M\cup B|\ge \frac{3\epsilon n}{2}$. 

On the other hand, by our choice of $B$ and $M$, we have that
\begin{align*}
    |M\cup B|\le |M|+\frac{|V(L_1)|-|M|}{2}= \frac{|V(L_1)|+|M|}{2}\le \frac{|V(L_1)|}{2}+\frac{\epsilon n}{20}.
\end{align*}
By Theorem \ref{th: dekk22}, \textbf{whp} $|V(L_1)|\le 2\epsilon n$, and hence $|M\cup B|\le \frac{21\epsilon n}{20}<\frac{3\epsilon n}{2}$ --- a contradiction. Hence, \textbf{whp} $H$ has the desired expansion properties. Furthermore, by Theorem \ref{th: dekk22} \textbf{whp}
\begin{align*}
    |V(H)|=|V(L_1)|-|M|\ge \frac{19\epsilon n}{10}-\frac{\epsilon n}{10}\ge \frac{3\epsilon n}{2}.
\end{align*}
\end{proof}

\section{Consequences of expansion in the giant component}\label{s: consequence}
We begin with the likely existence of a long cycle, which follows immediately from Theorem \ref{th: expansion}\ref{i:large} together with Theorem \ref{th: krivelevich cycle}.
\begin{proof}[Proof of Theorem \ref{th: consequences}\ref{i: cycle}]
By Theorem \ref{th: expansion}\ref{i:large},  there exists a constant $c<0$ such that \textbf{whp} for all $\epsilon^2n\le k\le \frac{3\epsilon n}{2}$ and all subsets $S\subseteq V(L_1)$ with $|S|=k$,
\begin{align*}
    |N_{G_p}(S)|\ge \frac{c|S|}{d\ln d}.
\end{align*}
Thus, applying Theorem \ref{th: krivelevich cycle} with $a=\frac{3\epsilon n}{2}$ and $b=\frac{3c\epsilon n}{4d\ln d}$, we obtain that \textbf{whp} $L_1$ contains a cycle of length $\Omega\left(\frac{n}{d\ln d}\right)$.
\end{proof}

Note that, due to the comment after Theorem \ref{th: expander}, up to a logarithmic factor in $d$ this is the best bound that can be given with such an argument based solely on the expansion of $L_1$. 

We now turn to Theorem \ref{th: consequences}\ref{i: diameter} and \ref{i: mixing time}. For these two theorems, the following two lemmas will be useful. The first is a variant of a lemma from \cite{DK21B}, bounding the typical number of edges incident to connected subsets in $G_p$, whose proof we include for completeness. 
\begin{lemma}\label{l: edges incident to subsets}
\textbf{Whp}, for all $S\subseteq V(L_1)$ such that $G_p[S]$ is connected, 
\begin{align}\label{e:edgesbound}
    e_{G_p}(S)+e_{G_p}(S,S^C)\le \max\left\{10|S|, 20\ln C \cdot d\right\}.
\end{align}
\end{lemma}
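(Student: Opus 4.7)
The plan is to deploy a first-moment/union bound argument in the spirit of Lemma \ref{l: Kd} and Theorem \ref{th: connected expansion}\ref{i:small}. Suppose there is a connected $S \subseteq V(L_1)$ of size $s$ violating \eqref{e:edgesbound}. Since $G_p[S]$ is connected it contains a spanning tree $T$ on $s$ vertices, and the at least $\max\{10s, 20\ln C \cdot d\}$ edges incident to $S$ in $G_p$ account for the $s-1$ tree edges plus a further set of at least
\[
m := \max\{10s, 20\ln C \cdot d\} - (s-1)
\]
additional edges of $G$ incident to $V(T)$, all of which must lie in $G_p$. I would union bound over the pairs $(T, s)$ using Lemma \ref{l: trees}, and use a Chernoff-type bound on the independent presence of the extra edges.

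More concretely, for a fixed tree $T$ on $s$ vertices in $G$, the probability that all $s-1$ tree edges lie in $G_p$ is $p^{s-1}$, and since $G$ is $d$-regular the number of edges of $G$ incident to $V(T)$ but not in $E(T)$ is at most $ds$. Hence by Lemma \ref{l: Chernoff} the probability that at least $m$ of these further edges lie in $G_p$ is at most $(e(1+\epsilon)s/m)^m$. Combined with Lemma \ref{l: trees}, the expected number of violating configurations of size $s$ is at most
\[
n(ed)^{s-1} p^{s-1} \left(\frac{e(1+\epsilon)s}{m}\right)^m \;\le\; n \cdot (e(1+\epsilon))^{s-1} \left(\frac{e(1+\epsilon)s}{m}\right)^m.
\]

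I would then split into two regimes according to which term dominates in the max. If $s \ge 2\ln C \cdot d$, then $m \ge 9s$ and the displayed bound becomes $n \cdot \bigl((e(1+\epsilon))^{10}/9^{9}\bigr)^s$; a numerical check shows $e^{10}/9^{9} < 10^{-4}$, so for small $\epsilon$ this is at most $n \cdot \gamma^s$ with $\gamma < 10^{-4}$, and combined with $n \le C^d$ and $s \ge 2\ln C \cdot d$ we get $o(1/n)$ per value of $s$. In the complementary regime $s < 2\ln C \cdot d$ we have $m \ge 18\ln C \cdot d$ and $s/m \le 1/9$, so the expected number of violating configurations is at most $n \cdot (e(1+\epsilon))^{s} \cdot (e(1+\epsilon)/9)^{18\ln C \cdot d}$; the factor $(e(1+\epsilon)/9)^{18\ln C \cdot d}$ is $C^{-\Omega(d)}$ with a constant large enough to absorb both $(e(1+\epsilon))^{s} \le C^{O(d)}$ and the leading $n \le C^d$. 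A union bound over the at most $n$ values of $s$ in both regimes then yields the lemma.

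The main subtlety is calibrating the two thresholds in the max so the argument is tight on both sides. The coefficient $10$ in $10|S|$ is chosen so that $m \ge 9s$, which is exactly what is needed for the combinatorial factor $(ed)^{s-1}$ coming from counting trees to be absorbed by $(e(1+\epsilon)/9)^{9s}$ in the large-$s$ regime. The additive term $20\ln C \cdot d$ plays the analogous role in the small-$s$ regime, where the number $n \le C^d$ of choices of root can only be beaten by exponential-in-$d$ savings from the extra edges, and $m \ge 18\ln C \cdot d$ is just enough to achieve this.
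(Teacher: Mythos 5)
Your proposal is correct and takes essentially the same route as the paper: a first-moment union bound over spanning trees via Lemma~\ref{l: trees}, combined with a tail bound on the number of additional incident edges that land in $G_p$. The only real divergence is in how the small-$s$ regime is handled. You run the first-moment calculation directly for all $s$, with the additive threshold $20\ln C\cdot d$ supplying the exponential-in-$d$ saving when $s$ is small; this requires the careful case split and numerical bookkeeping you describe. The paper instead runs the union bound only for $s\ge \ln C\cdot d$ (where it establishes $e(S)+e(S,S^C)\le 10|S|$), and then handles small connected $S$ via a monotonicity trick: since $L_1$ is connected and \textbf{whp} large, any small connected $S$ extends to a connected $S'\supseteq S$ at the threshold size, and $2e(S)+e(S,S^C)\le 2e(S')+e(S',S'^C)$, so the bound for $S'$ transfers to $S$. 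The paper's trick avoids the second case of your calculation entirely, while your version is more self-contained and arguably makes the role of the two terms in the max more transparent. Both are sound; the paper also uses a raw union bound over edge-sets ($\binom{kd}{9k}p^{9k}$) in place of your invocation of Lemma~\ref{l: Chernoff}, but these are interchangeable estimates of the same quantity.
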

\begin{proof}
Let us begin by considering connected sets $S$ such that $|S|=k\ge \ln C \cdot d$. Since any connected set in $G_p$ has a spanning tree, it is sufficient to show that \eqref{e:edgesbound} holds whenever $S$ is the vertex set of a tree in $G_p$ of order $k \geq d$ in $G_p$. By Lemma \ref{l: trees}, there are at most $n(ed)^{k-1}$ trees on $k$ vertices in $G$ and the probability that each such tree is contained $G_p$ is $p^{k-1}$. Since each set of $k$ vertices is incident to at most $kd$ edges in $G$,  there are most $\binom{kd}{9k}$ ways to choose an additional $9k$ edges incident to this set of vertices, and these edges are in $G_p$ with probability $p^{9k}$. Hence, by the union bound, the probability that \eqref{e:edgesbound} fails to hold is at most:
\begin{align*}
    n(ed)^{k-1}p^{k-1}\binom{kd}{9k}p^{9k}&\le n\cdot \left(2e\right)^{k-1}\left(\frac{2e}{9}\right)^{9k}\le n\exp(-2k)=o(1/n),
\end{align*}
since $k\ge \ln C \cdot d  \ge \ln n$. Taking a union bound over the at most $n$ possible values of $k$, it follows that \textbf{whp} for all subsets $S\subseteq V(L_1)$ with $|S|\ge \ln C \cdot d$ and $G_p[S]$ connected, $e(S)+e(S, S^C)\le 10|S|$.

We now turn to connected sets $S$ with $|S|<\ln C\cdot d$. Since $L_1$ is connected, and by Theorem \ref{th: dekk22} we have that \textbf{whp} $|V(L_1)|\ge \epsilon n$, there exists a connected set $S'\supseteq S$ such that $|S'|=d\ln d$. Note that $2e(S)+e(S,S^c)\le 2e(S')+e(S',S'^C)$, and so in particular by the above \textbf{whp}
\begin{align*}
    e(S)+e(S, S^C)\le 2e(S)+e(S,S^C)\le 2\left(e(S')+e(S', S'^C)\right)\le 20\ln C \cdot d,
\end{align*}
completing the proof.
\end{proof}

We also require a bound on the typical number of edges in $L_1$. While this can be calculated quite accurately, the following naive, yet simple to prove bound will suffice for our goals, and utilises the Depth First Search (DFS) algorithm (see \cite{K19a} for definition and application of the DFS algorithm in random graphs). Recall that the excess of a connected graph $H$ is defined as $|E(H)|-(|V(H)|-1)$. 
\begin{lemma}\label{l: e(L_1)}
\textbf{Whp}, $e(L_1)<3\epsilon n$.
\end{lemma}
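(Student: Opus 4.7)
The plan is to combine the DFS algorithm, as suggested by the hint, with the sharp bound $|V(L_1)| = (1+o(1))y(\epsilon)n$ from Theorem~\ref{th: dekk22}. Expanding the fixed-point equation $y = 1 - e^{-(1+\epsilon)y}$ as a power series in $\epsilon$ gives $y(\epsilon) = 2\epsilon - \tfrac{8}{3}\epsilon^2 + O(\epsilon^3)$, so $y(\epsilon) < 2\epsilon$ for $\epsilon$ small enough, and hence \textbf{whp} $|V(L_1)| \le 2\epsilon n$. It therefore suffices to prove $e(L_1) < 3\epsilon n$ on this event.

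To bound $e(L_1)$, I would run DFS on $G_p$ using the principle of deferred decisions, so that each edge of $G$ is queried at most once and each query is an independent $\mathrm{Bernoulli}(p)$ trial. Let $[T_1, T_2]$ be the random time interval during which DFS explores the component $L_1$. Two key observations underpin the argument: \textbf{(i)} every edge of $L_1$ is queried during $[T_1, T_2]$, and conversely every successful query in that interval reveals an edge of $L_1$, because a query made during $[T_1, T_2]$ has its top-of-stack endpoint in $V(L_1)$ and, if the edge lies in $G_p$, its other endpoint must also lie in $V(L_1)$ since $L_1$ is a component of $G_p$; \textbf{(ii)} for each $v \in V(L_1)$ at most $d$ queries are ever made from $v$'s side throughout DFS (one per neighbor), so the total number of queries $Q$ during $[T_1, T_2]$ satisfies $Q \le d\,|V(L_1)|$.

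Writing $X_1, X_2, \ldots$ for the sequence of iid $\mathrm{Bernoulli}(p)$ outcomes, we then have $e(L_1) = \sum_{i=T_1+1}^{T_2} X_i$, and on the event $\{Q \le q\}$ this sum is bounded above by $\sum_{i=T_1+1}^{T_1+q} X_i$, which by the strong Markov property has the same distribution as $\mathrm{Bin}(q,p)$. Taking $q = 2\epsilon n d$, noting that $\{Q > 2\epsilon n d\} \subseteq \{|V(L_1)| > 2\epsilon n\}$, and combining with Theorem~\ref{th: dekk22}, we obtain
\[
\mathbb{P}\!\left[e(L_1) \ge 3\epsilon n\right] \le \mathbb{P}\!\left[|V(L_1)| > 2\epsilon n\right] + \mathbb{P}\!\left[\mathrm{Bin}(2\epsilon n d, p) \ge 3\epsilon n\right] = o(1),
\]
since the mean $2\epsilon n(1+\epsilon)$ of the binomial is strictly smaller than $3\epsilon n$ with a linear-in-$n$ gap for small $\epsilon$, so Lemma~\ref{l: Chernoff} gives an $\exp(-\Omega(\epsilon n))$ tail bound. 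The only subtle point is the stochastic-dominance step involving the stopping time $Q$, which is a standard consequence of the fact that on $\{Q \le q\}$ the random-length sum is bounded by the deterministic-length one and of the translation invariance of the iid Bernoulli sequence.
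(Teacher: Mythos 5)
Your approach differs from the paper's and is, in a sense, more direct. The paper's proof first establishes a preliminary claim (via a Chernoff-plus-union-bound over time intervals) that \textbf{whp} the DFS queries at least two-thirds of the edges incident to any component of order at least $d^2$, thereby bounding the number of \emph{unqueried} edges incident to $L_1$ by $\tfrac{1}{3}|V(L_1)|d$; the excess edges of $L_1$ are then dominated by a binomial on those fresh edges, while the tree edges contribute at most $|V(L_1)|-1$. You instead identify $e(L_1)$ with the number of positive queries in $L_1$'s exploration window $[T_1,T_2]$ and dominate it directly, bypassing the preliminary claim. Note, however, that your identity $e(L_1)=\sum_{i=T_1+1}^{T_2}X_i$ requires that \emph{every} edge of $L_1$ is queried during $[T_1,T_2]$, which holds only for the DFS variant that queries every remaining edge incident to the top-of-stack vertex, including edges to already-visited vertices. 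The more usual exploration DFS (and the one the paper relies on, as is clear from its assertion that at most $|V(L_1)|-1$ edges of $L_1$ are exposed) queries only edges to unvisited vertices, and then the non-tree edges of $L_1$ are never queried, so the identity fails. You should state explicitly which variant you use.

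The more serious gap is the stochastic-dominance step. The random index $T_1$ is \emph{not} a stopping time with respect to the filtration generated by $X_1,X_2,\dots$: at time $T_1$ one cannot yet determine whether the component about to be explored will turn out to be the largest, since that depends on components explored later. Consequently the strong Markov property does not apply at $T_1$, and the conclusion $\sum_{i=T_1+1}^{T_1+q}X_i\sim\mathrm{Bin}(q,p)$ is unjustified as written; a general random shift of an i.i.d.\ sequence need not preserve its law. You flag this as ``the only subtle point,'' but the appeal to translation invariance does not close it. The standard repair is to union bound over the at most $n$ component start times $\tau_1<\tau_2<\cdots$, each of which \emph{is} a stopping time: conditionally on $\mathcal{F}_{\tau_j}$, the next $q=2\epsilon nd$ bits are i.i.d.\ $\mathrm{Bernoulli}(p)$, so the probability that the $j$-th component has at least $3\epsilon n$ edges yet uses at most $q$ queries is at most $\mathbb{P}\left[\mathrm{Bin}(q,p)\ge 3\epsilon n\right]=e^{-\Omega(\epsilon n)}$, and summing over $j$ costs only a harmless factor of $n$. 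With this fix and the DFS variant made explicit, your argument is correct and gives the same bound.
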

\begin{proof}
We begin by running a DFS algorithm with $\frac{nd}{2}$ random bits $X_i$, to expose a spanning forest of $G_p$. We first claim that if there is a connected component $S$ of order $k$ with $k\ge d^2$, then we have queried at least $\frac{2kd}{3}$ of the edges incident to $S$. Indeed, otherwise, there would have been an interval of length at most $\frac{2kd}{3}$ where we receive $k$ positive answers. By a typical Chernoff-type bound, the probability that a fixed interval of length $\frac{2kd}{3}$ contains $k$ positive answers is at most
\begin{align*}
    \mathbb{P}\left(Bin\left(\frac{2kd}{3},\frac{1+\epsilon}{d}\right)\ge k\right)\le \exp\left(-\frac{k}{30}\right) \leq \exp\left(-\frac{d^2}{30}\right) .
\end{align*}
In particular, taking a union bound over the at most $nd$ intervals of length $\frac{2kd}{3}$ and at most $n$ different values of $k$ completes the proof of the claim. 

By Theorem \ref{th: dekk22}, \textbf{whp} this algorithm discovered a unique giant component $L_1$, with $|V(L_1)|< 2\epsilon n$, and in doing so queried at least $\frac{2|V(L_1)|d}{3}$ of the at most $|V(L_1)|d$ edges incident to $V(L_1)$. However, since we exposed a spanning tree of $L_1$, at most $|V(L_1)|-1$ edges of $L_1$ were exposed during the algorithm. Since there are at most $\frac{|V(L_1)|d}{3}$ queries left and \textbf{whp} $|V(L_1)<2\epsilon n$, the number of excess edges in $L_1$ is stochastically dominated by a binomial random variable $\text{Bin}\left(\frac{2\epsilon n d}{3}, \frac{1+\epsilon}{d}\right)$. In particular, by a standard Chernoff-type bound, \text{whp} $L_1$ has at most $\epsilon n$ excess edges and hence in total $e(L_1) \leq |V(L_1)| -1 + \epsilon n < 3\epsilon n$.
\end{proof}

\subsection{Proof of Theorem \ref{th: consequences}\ref{i: diameter}}
\begin{proof}[Proof of Theorem \ref{th: consequences}\ref{i: diameter}]
We note that by Theorem \ref{th: dekk22} and Lemma \ref{l: e(L_1)}, \textbf{whp}, $\epsilon n < |E(L_1)|, |V(L_1)|< 3\epsilon n$, and we assume in what follows that this holds. Given a vertex $v \in V(L_1)$, let $B(v,r)$ denote the ball of radius $r$ around $v$ in $L_1$. Since $L_1$ is connected and has size at least $\epsilon n$, for any $v\in V(L_1)$ we have that $|B(v,d\ln d)|\ge d\ln d$. Furthermore, by Lemma \ref{l: edges incident to subsets}, \textbf{whp} for any $B(v,r)\subseteq V(L_1)$ with $|B(v,r)|\ge \ln C \cdot d$, 
\[
\frac{e\left(B(v,r)\right)}{10}  \le |B(v,r)|\le e(B(v,r)) -1,
\]
where the lower bound holds since $B(v,r)$ is connected. By Theorem \ref{th: connected expansion}\ref{i:small} and \ref{i:medium}, \textbf{whp} for any $B(v,r)\subseteq V(L_1)$ with $|B(v,r)|\ge d\ln d$,
\begin{align*}
    e(B(v,r+1))&= e(B(v,r)) + \partial_{G_p}(B(v,r)) \\ 
    &\geq \min\left\{\frac{3\epsilon n}{2}-1,e(B(v,r)) +\frac{c\ln\left(\frac{n}{|B(v,r)|}\right)}{d\ln d}|B(v,r)|\right\}. 
\end{align*}
By the above, \textbf{whp}
\begin{align*}
    e(B(v,r)) +\frac{c\ln\left(\frac{n}{|B(v,r)|}\right)}{d\ln d}|B(v,r)|&\ge \left(1+\frac{c\ln\left(\frac{n}{e(B(v,r))-1}\right)}{10d\ln d}\right)e(B(v,r))\\&\ge \left(1+\frac{c'\ln\left(\frac{n}{e(B(v,r))}\right)}{10d\ln d}\right)e(B(v,r)),
\end{align*}
for some constants $c,c'>0$, and hence \textbf{whp}
\begin{align}
    e(B(v,r+1))\ge \min\left\{\frac{3\epsilon n}{2}-1,\left(1+\frac{c'\ln\left(\frac{n}{e(B(v,r))}\right)}{10d\ln d}\right)e(B(v,r))\right\}. \label{e:edgeexpansion}
\end{align}
We continue assuming the above holds deterministically.

Let $v$ be an arbitrary vertex in $L_1$. We let $B_0\coloneqq B(v,d\ln d)$, and define inductively $B_i\coloneqq B(v,d\ln d +i)$. 

Let $C'>0$ be such that $n=\exp\left(C'd\right)$. Given $\frac{1}{d}<\alpha\le 1$, we define
\begin{align*}
    I(\alpha)\coloneqq \left\{i\in \mathbb{N}\colon \exp\left((1-\alpha)C'd\right)\le e(B_i)\le  \exp\left(\left(1-\frac{\alpha}{2}\right)C'd\right)\right\}.
\end{align*}
Using \eqref{e:edgeexpansion} we can bound the size of $I(\alpha)$. For each $i\in I(\alpha)$, we have that $\frac{c'\ln\left(\frac{n}{e(B(i))}\right)}{10d\ln d}\ge \frac{c' C'\alpha}{20\ln d} : =\frac{c''\alpha}{\ln d}$. Thus by  \eqref{e:edgeexpansion},
\begin{align*}
    |I(\alpha)|\le \log_{1+\frac{c''\alpha}{\ln d}}\left(\frac{\exp\left(\left(1-\frac{\alpha}{2}\right)C'd\right)}{\exp\left((1-\alpha)C'd\right)}\right)=\frac{\alpha C'd}{2\ln\left(1+\frac{c''\alpha}{\ln d}\right)}=O(d\ln d).
\end{align*}
Let $i_{\mathrm{max}}$ be the smallest index such that $e(B_i)>\frac{3\epsilon n}{2}-1$, let $\alpha_0=1$ and let $\alpha_j=\frac{\alpha_0}{2^j}$. Then, there is a smallest index $j_{\mathrm{max}}$ such that 
\begin{align*}
    [i_{\mathrm{max}}]=\bigcup_{j=1}^{j_{\mathrm{max}}}I(\alpha_j).
\end{align*}
Furthermore, there is some constant $C''$ such that if we let $\alpha_{\mathrm{max}}=\frac{C''\ln\left(\frac{1}{\epsilon}\right)}{d}$, then $\exp\left((1-\alpha_{\mathrm{max}})C'd\right)=\frac{3\epsilon n}{2}$.
 Since $\alpha_i=\frac{\alpha_0}{2^{i}}$, it follows that 
 $$j_{\mathrm{max}} \leq \left\lceil \log_2\left(\frac{d}{C''\ln\left(\frac{1}{\epsilon}\right)}\right) \right \rceil=O(\ln d).$$
Thus, 
\[i_{\mathrm{max}} \leq j_{\mathrm{max}} \cdot \max_{j \leq j_{\mathrm{max}}} |I(\alpha_j)| = O(d \ln^2 d).
\]

Therefore it follows that there is some constant $K>0$ such that for every $v\in V(L_1)$,
\[
e\left(B(v,Kd\ln^2d)\right) \geq e\left(B(v,(K-1)d\ln^2d)\right)\ge \frac{3\epsilon n}{2}-1 \geq \frac{|E(L_1)|}{2}.
\]
Since $L_1$ is connected, we have that $e\left(B(v,Kd\ln^2d+1)\right)>\frac{|E(L_1)|}{2}$.

Thus, we can cover more than half of $E(L_1)$ within a ball of radius $O(d\ln^2d)$ from any vertex $v\in V(L_1)$, and therefore the diameter of $L_1$ is $O(d\ln^2d)$.
\end{proof}

\subsection{Proof of Theorem \ref{th: consequences}\ref{i: mixing time}}
We start with some definitions and brief background (see \cite{LPW17} for a more comprehensive introduction to Markov chains and mixing time). Given a graph $G$, the \textit{lazy simple random walk} on $G$ is a Markov chain starting at a vertex $v_0$ chosen according to some distribution $\sigma$, such that for any vertex $v\in V(G)$ the walk stays at $v$ with probability $\frac{1}{2}$, and otherwise moves to a uniformly chosen random neighbour $u$ of $v$. Hence, the transition probability from $v$ to $u$ satisfies $\mathbb{P}(v\to u)=\frac{1}{2d(v)}$. If $G$ is connected, then this Markov chain is irreducible and ergodic and as such has a stationary distribution, which we call the \textit{stationary distribution} $\pi$, which can be seen to be given by $\pi(v)=\frac{d(v)}{2e(G)}$ for each $v\in V(G)$. We are interested in estimating how quickly this Markov chain converges to its limit distribution. For that, recall that the \textit{total variation distance} $d_{TV}$ between two distributions $p_1$ and $p_2$ on $V(G)$ is defined by 
\[
d_{TV}(p_1,p_2):=\max_{A\subset V(G)}\bigg|p_1(A)-p_2(A)\bigg|.
\]
Let $P^t(v,\cdot)$ denote the distribution on $V(G)$ given by starting the lazy random walk at $v\in V(G)$ and running for $t$ steps. If we define $d(t):=\max_{v\in V(G)}d_{TV}\left(P^t(v,\cdot),\pi\right)$, then the \textit{mixing time} of the lazy random walk is then defined as $t_{mix}:=\min\left\{t:d(t)\le \frac{1}{4}\right\}.$
Now, for any $S\subseteq V(G)$, let
\begin{align*}
    \pi(S):=\sum_{v\in S}\pi(v)=\frac{2e(S)+e(S,S^C)}{2e(G)} \qquad \text{ and } \qquad 
    Q(S):=\sum_{v\in S, u\in S^C}\pi(v)\mathbb{P}(v\to u)=\frac{e(S,S^C)}{4e(G)}.
\end{align*}
The \textit{conductance} $\Phi(S)$ of $S$  is then given by
\begin{align*}
    \Phi(S):=\frac{Q(S)}{\pi(S)\pi(S^C)}=\frac{e(S, S^C)}{2\left(2e(S)+e(S,S^C)\right)\pi(S^C)},
\end{align*}
where we note that since $Q(S)=Q(S^C)$, we have that $\Phi(S)=\Phi(S^C)$. Let $\pi_{\min}=\min_{v\in V(G)}\pi(v)$. For $\rho>\pi_{\min}$, we define
\begin{align*}
    \Phi(\rho):=\min\left\{\Phi(S): S\subseteq V(G), \rho/2\le \pi(S)\le \rho, \text{S is connected in } G\right\},
\end{align*}
if there is no such subset $S$, we set $\Phi(\rho)=1$. The following theorem due to Fountoulakis and Reed \cite{FR07a} bounds the mixing time through the conductance of connected sets:
\begin{thm}[Theorem 1 of \cite{FR07a}] \label{th: mixing-time-tool}
\textit{There exists an absolute constant $K>0$ such that
\begin{align*}
    t_{mix}\le K\sum_{j=1}^{\log_2\pi_{\min}^{-1}}\Phi^{-2}\left(2^{-j}\right).
\end{align*}}
\end{thm}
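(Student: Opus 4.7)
The plan is to adapt a conductance-profile mixing-time bound of Lovász--Kannan type (or the evolving-sets framework of Morris and Peres) to the setting where $\Phi(\rho)$ is restricted to connected sets. The main technical content will be showing that this connectedness restriction in the definition of $\Phi$ costs at most an absolute multiplicative constant in the final mixing-time sum.

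First, I would set up the standard density-decay framework. Fix a start vertex $v$ and put $f_t(u) \coloneqq P^t(v,u)/\pi(u)$. Reversibility of the lazy chain with respect to $\pi$ gives the Dirichlet-form identity
\begin{align*}
\sum_u \pi(u)\bigl(f_t(u) - f_{t+1}(u)\bigr)\, g(u) = \tfrac{1}{2}\sum_{u,w} \pi(u) P(u,w) \bigl(f_t(u) - f_t(w)\bigr)\bigl(g(u) - g(w)\bigr)
\end{align*}
for any test function $g$. Specialising to $g = \mathbf{1}_A$ for a super-level set $A$ of $f_t$ at a dyadic density threshold, and applying Cauchy--Schwarz on the Dirichlet-form right-hand side, yields a per-step contraction of the ``excess mass'' above the threshold by a factor $1 - c\,\Phi(A)^2$. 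Iterating through phases indexed by $j = 1, \ldots, \log_2 \pi_{\min}^{-1}$, where phase $j$ has length $T_j \asymp \Phi^{-2}(2^{-j})$ and is responsible for contracting super-level sets of $\pi$-measure $\sim 2^{-j}$ down to $\sim 2^{-j-1}$, produces the desired shape of bound but with the \emph{unrestricted} conductance profile.

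The core step is to replace the unrestricted profile by the connected-set profile. The crucial observation is that the outgoing Dirichlet-form contribution
\begin{align*}
\sum_{u\in A,\, w\notin A}\pi(u) P(u,w)\bigl(f_t(u)-f_t(w)\bigr)^2
\end{align*}
splits additively over the connected components $A_1,\ldots,A_m$ of $A$, since each edge of $\partial A$ lies in the boundary of exactly one $A_i$. Each $A_i$ is connected by construction, so the connected-set profile $\Phi(\pi(A_i))$ is applicable to each component individually. I would then aggregate these per-component estimates via a dyadic regrouping of the components by scale $2^{-k}$: components of $\pi$-measure in $[2^{-k-1},2^{-k})$ together contribute an outflow controlled by $\Phi^{-2}(2^{-k})$ times their aggregate excess, and this contribution is charged to phase $T_k$ of the outer iteration.

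The hard part will be carrying out this aggregation without incurring extra logarithmic factors, particularly when a super-level set at ambient scale $2^{-j}$ fragments into many much smaller connected components. I expect this is handled by a simultaneous potential-function argument that tracks the excess mass above every dyadic density threshold at once, showing that each infinitesimal unit of excess mass is drained during the phase matching its own component scale, and is therefore counted in exactly one of the terms $\Phi^{-2}(2^{-j})$. A separate small check is needed for components of measure at or near $\pi_{\min}$, but these can be absorbed into the boundary term of the sum and explain the cap $j \le \log_2 \pi_{\min}^{-1}$ in the statement.
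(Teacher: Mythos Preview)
The paper does not prove this statement. Theorem \ref{th: mixing-time-tool} is quoted verbatim from Fountoulakis and Reed \cite{FR07a} as a black-box tool and is then applied in the proof of Theorem \ref{th: consequences}\ref{i: mixing time}; no argument for it appears anywhere in the paper. There is therefore nothing to compare your proposal against here.

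For what it is worth, your outline is in the right spirit: the Fountoulakis--Reed argument does go through a Lov\'asz--Kannan-type conductance-profile bound, and the reduction to connected sets does proceed by decomposing a candidate set into its connected components and using that the edge-boundary splits additively over components. But your sketch is candid that ``the hard part'' --- aggregating the per-component contributions across dyadic scales without picking up an extra logarithmic factor --- is left unresolved, and that is exactly where the technical content of \cite{FR07a} lies; the vague ``simultaneous potential-function argument'' you gesture at would need to be made precise. If your aim is to reprove the result, that step must be filled in; if not, citing \cite{FR07a} as the present paper does is the correct course.
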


Throughout the rest of this section, we consider the mixing time of the lazy random walk on the giant component $L_1$ of $G_p$. Below, $e(S)$ will stand for $e_{G_p}(S)$ and $e\left(S,S^C\right)$ will stand for $\big|\partial_{G_p}(S)\big|$. 

We now aim to bound $\Phi(\rho)$. We begin with the following simple observation.
 \begin{lemma}\label{l: pi(S) and |S|} 
\textbf{Whp}, for any $S\subseteq V(L_1)$ such that $G_p[S]$ is connected and $\pi(S)\ge \frac{100\ln c \cdot d}{\epsilon^3n}$, we have that $|S|\ge \frac{10d \ln C}{\epsilon^2}$.
\end{lemma}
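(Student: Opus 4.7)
The plan is to prove the contrapositive: assume $|S| < \frac{10 d \ln C}{\epsilon^2}$ and derive $\pi(S) < \frac{100 \ln C \cdot d}{\epsilon^3 n}$. The argument splits naturally into separately bounding the numerator and denominator of
\[
\pi(S) = \frac{2 e_{G_p}(S) + e_{G_p}(S, S^C)}{2 e(L_1)},
\]
for which the two lemmas proved immediately above the statement, together with Theorem \ref{th: dekk22}, suffice.

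For the numerator, since $G_p[S]$ is connected, Lemma \ref{l: edges incident to subsets} applies and gives whp
\[
2 e_{G_p}(S) + e_{G_p}(S, S^C) \le 2\bigl(e_{G_p}(S) + e_{G_p}(S, S^C)\bigr) \le 2 \max\{10|S|,\, 20 \ln C \cdot d\}.
\]
Under the hypothesis $|S| < \frac{10 d \ln C}{\epsilon^2}$, both arguments of the maximum are, for $\epsilon$ small enough, bounded by $\frac{100 d \ln C}{\epsilon^2}$, so the numerator is at most $\frac{200 d \ln C}{\epsilon^2}$.

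For the denominator, Theorem \ref{th: dekk22} gives $|V(L_1)| = (1+o(1))yn$ whp, and expanding the equation $y = 1 - \exp(-(1+\epsilon)y)$ for small $\epsilon$ yields $y = 2\epsilon - O(\epsilon^2)$, so $|V(L_1)| \ge (2 - o(1)) \epsilon n$. Since $L_1$ is connected, $e(L_1) \ge |V(L_1)| - 1 > \epsilon n$ for $n$ sufficiently large---this is precisely the bound on $e(L_1)$ already invoked at the start of the proof of Theorem \ref{th: consequences}\ref{i: diameter}. Combining the two estimates yields
\[
\pi(S) < \frac{200 d \ln C / \epsilon^2}{2 \epsilon n} = \frac{100 \ln C \cdot d}{\epsilon^3 n},
\]
which is the desired contradiction. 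There is no real obstacle in this argument: it is a routine application of the preliminary lemmas, and the only care needed is to ensure that the constants line up exactly with the threshold in the statement, which they do once one uses the sharper bound $e(L_1) > \epsilon n$ rather than merely $e(L_1) \ge \epsilon n / 2$.
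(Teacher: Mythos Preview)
Your proof is correct and is essentially the same argument as the paper's, just phrased as a contrapositive: the paper instead assumes $\pi(S)\ge \frac{100\ln C\cdot d}{\epsilon^3 n}$, uses $e(L_1)\ge \epsilon n$ to get $2e(S)+e(S,S^C)\ge \frac{200\ln C\cdot d}{\epsilon^2}$, and then invokes Lemma~\ref{l: edges incident to subsets} to force $|S|\ge \frac{10\ln C\cdot d}{\epsilon^2}$. The ingredients and constants are identical.
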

\begin{proof}
Given $S$ satisfying the conditions of the lemma, it follows that $2e(S)+e(S,S^C) = 2e(L_1)\pi(S)\ge \frac{100\ln c \cdot d}{\epsilon^3n}e(L_1)$. Since $L_1$ is connected, by Theorem \ref{th: dekk22} \textbf{whp} $e(L_1)\ge |V(L_1)|-1\ge \epsilon n$. In particular, \textbf{whp} $2e(S)+e(S,S^C)\ge \frac{200\ln c \cdot d}{\epsilon^2}$, and so by Lemma \ref{l: edges incident to subsets}, \textbf{whp} 
\begin{align*}
    \frac{200\ln c \cdot d}{\epsilon^2}\le 2e(s)+e(S,S^C)\le 2\left(e(S)+e(S,S^C)\right)\le \max\{ 20|S|, 40\ln c \cdot d\}.
\end{align*}
Since $\epsilon$ is sufficiently small, $|S|\ge \frac{10\ln c \cdot d}{\epsilon^2}$, as required.
\end{proof}

We now show that for wide ranges of $\rho$, we can apply Theorem \ref{th: connected expansion}\ref{i:small} and \ref{i:medium}. We begin by relating bounds on $\pi(S)$ to those on $\Phi(S)$.
\begin{lemma}\label{l: bounding Phi(S)}
There exists a constant $c>0$ such that \textbf{whp}, for every $S\subseteq V(L_1)$ with $G_p[S]$ connected and $\frac{100\ln c \cdot d}{\epsilon^3 n}\le \pi(S)\le \frac{1}{2}$,
\begin{align*}
    \Phi(S)\ge \frac{c\ln\left(\frac{n}{|S|}\right)}{d\ln d}.
\end{align*}
\end{lemma}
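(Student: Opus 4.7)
The plan is to reduce the bound on the conductance to a lower bound on the edge-boundary over the size of $S$, and then invoke Theorem \ref{th: connected expansion} directly. First, since $\pi(S^C) \le 1$ and $\pi(S) = \tfrac{2e(S)+e(S,S^C)}{2e(L_1)}$, we immediately obtain
\begin{align*}
\Phi(S) \;=\; \frac{e(S,S^C)}{2 \pi(S^C)\bigl(2e(S)+e(S,S^C)\bigr)} \;\ge\; \frac{e(S,S^C)}{2\bigl(2e(S)+e(S,S^C)\bigr)}.
\end{align*}
The first task is then to bound the denominator $2e(S)+e(S,S^C)$ by $O(|S|)$. Lemma \ref{l: pi(S) and |S|} gives $|S| \ge \tfrac{10 d\ln C}{\epsilon^2}$, so in Lemma \ref{l: edges incident to subsets} the maximum in \eqref{e:edgesbound} is attained by $10|S|$, yielding $e(S)+e(S,S^C) \le 10|S|$ and therefore $\Phi(S) \ge \tfrac{e(S,S^C)}{40|S|}$ \textbf{whp}.

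Next, I need an upper bound on $|S|$ to fit within the regimes of Theorem \ref{th: connected expansion}. Since $\pi(S) \le \tfrac12$, we have $2e(S)+e(S,S^C) \le e(L_1)$, and as $G_p[S]$ is connected, $2(|S|-1) \le 2e(S) \le e(L_1)$. Lemma \ref{l: e(L_1)} then gives $|S| \le \tfrac{3\epsilon n}{2} + 1$, so $S$ lies in the range where Theorem \ref{th: connected expansion} applies.

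Now split into two subranges. If $\tfrac{9d\ln C}{\epsilon^2} \le |S| \le n^{\epsilon^5}$, then Theorem \ref{th: connected expansion}\ref{i:small} gives $|N_{G_p}(S)| \ge c'|S|$ for some constant $c'>0$, and since every vertex in $N_{G_p}(S)$ contributes at least one edge to the boundary, $e(S,S^C) \ge c'|S|$. Hence $\Phi(S) \ge c'/40$, which comfortably dominates $\tfrac{c\ln(n/|S|)}{d\ln d}$ (the latter being at most $\tfrac{\ln C}{\ln d}$ since $\ln n \le d\ln C$). If instead $n^{\epsilon^5} \le |S| \le \tfrac{3\epsilon n}{2}$, then Theorem \ref{th: connected expansion}\ref{i:medium} gives $e(S,S^C) \ge \tfrac{c'|S|\ln(n/|S|)}{d\ln d}$, so $\Phi(S) \ge \tfrac{c'\ln(n/|S|)}{40\, d\ln d}$. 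Taking $c$ sufficiently small in terms of $c'$ yields the required bound in both cases.

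The proof is essentially a routine combination of the already-established facts: the only mild subtlety is ensuring the hypotheses of Theorem \ref{th: connected expansion}\ref{i:small} and Lemma \ref{l: edges incident to subsets} are both satisfied simultaneously, which is precisely what Lemma \ref{l: pi(S) and |S|} was set up for. No new combinatorial obstacle arises.
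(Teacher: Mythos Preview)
Your proof is correct and follows essentially the same route as the paper's: use Lemma~\ref{l: pi(S) and |S|} and Lemma~\ref{l: e(L_1)} to trap $|S|$ in the range $\bigl[\tfrac{10d\ln C}{\epsilon^2},\tfrac{3\epsilon n}{2}\bigr]$, bound the denominator $2e(S)+e(S,S^C)$ by $O(|S|)$ via Lemma~\ref{l: edges incident to subsets}, and bound the numerator $e(S,S^C)$ from below via Theorem~\ref{th: connected expansion}. The only cosmetic difference is that you split explicitly into the two regimes of Theorem~\ref{th: connected expansion}\ref{i:small} and~\ref{i:medium}, whereas the paper absorbs both into the single statement $e(S,S^C)\ge \tfrac{c'|S|\ln(n/|S|)}{d\ln d}$ (which is implied by part~\ref{i:small} since $\ln(n/|S|)\le d\ln C$).
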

\begin{proof}
Since $\pi(S)=\frac{2e(S)+e(S,S^C)}{2e(L_1)}\le \frac{1}{2}$, it follows that $e(S) \leq \frac{e(L_1)}{2}$, as otherwise we have $\pi(S)>\frac{1}{2}$. Furthermore, by Lemma \ref{l: e(L_1)}, \textbf{whp} $e(L_1)<3\epsilon n$ and thus \textbf{whp} $e(S)<\frac{3\epsilon n}{2}$. Since $G_p[S]$ is connected, we have that $|S|\le 1+e(S)$. Therefore, \textbf{whp} $|S|\le \frac{3\epsilon n}{2}$.
On the other hand, since $\pi(S)\ge \frac{100\ln Cd}{\epsilon^3n}$, by Lemma \ref{l: pi(S) and |S|} \textbf{whp} $|S|\ge\frac{10d \ln C}{\epsilon^2}$.

Altogether, we have that \textbf{whp} $\frac{10\ln c \cdot d}{\epsilon^2}\le |S| \le \frac{3\epsilon n}{2}$. Thus, by Theorem \ref{th: connected expansion}\ref{i:small} and \ref{i:medium}, there exists a constant $c'>0$ such that \textbf{whp} $e(S,S^C)\ge \frac{c'\ln\left(\frac{n}{|S|}\right)|S|}{d\ln d}$, and by Lemma \ref{l: edges incident to subsets} we have that \textbf{whp} $2e(S)+e(S,S^C)\le 2\left(e(S)+e(S,S^C)\right)\le 20|S|$. Therefore, with $c=\frac{c'}{20}$, \textbf{whp}
\begin{align*}
    \Phi(S)=\frac{e(S,S^C)}{2\left(2e(S)+e(S,S^C)\right)\pi(S^C)}\ge \frac{c'\ln\left(\frac{n}{|S|}\right)|S|}{d\ln d\cdot 20|S|}\ge \frac{c\ln\left(\frac{n}{|S|}\right)}{d\ln d}.
\end{align*}
\end{proof}

Before applying Theorem \ref{th: mixing-time-tool}, we estimate $\Phi(2^{-j})$ for wide ranges of values of $j$ using Lemma \ref{l: bounding Phi(S)}. 
\begin{lemma}\label{l: bounding Phi(p)}
Let $j$ be an integer such that $\frac{200 \ln c \cdot d}{\epsilon^3 n}\le 2^{-j}\le \frac{1}{2}$. Then there exists a constant $c>0$ such that \textbf{whp}
\begin{align*}
    \Phi(2^{-j})\ge \frac{cj}{d\ln d}.
\end{align*}
\end{lemma}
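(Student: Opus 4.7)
The plan is to reduce the bound directly to the preceding Lemma~\ref{l: bounding Phi(S)}. For any connected $S \subseteq V(L_1)$ contributing to $\Phi(2^{-j})$, i.e.\ satisfying $2^{-j-1} \le \pi(S) \le 2^{-j}$, the hypotheses on $j$ guarantee that $\pi(S) \ge \frac{100 \ln c \cdot d}{\epsilon^3 n}$ and $\pi(S) \le 1/2$, so Lemma~\ref{l: bounding Phi(S)} applies and yields \textbf{whp} that $\Phi(S) \ge \frac{c' \ln(n/|S|)}{d\ln d}$ for a constant $c'>0$. In the degenerate case where no such $S$ exists, $\Phi(2^{-j})=1$ by convention; using Lemma~\ref{l: e(L_1)} one checks that $j \le \log_2(2e(L_1)) = O(d)$, so the target bound $cj/(d\ln d)$ is at most a constant and the conclusion holds trivially for $c$ small enough.

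The only real work is to convert the upper bound $\pi(S) \le 2^{-j}$ into the estimate $\ln(n/|S|) = \Omega(j)$. I plan to do this by invoking Lemma~\ref{l: e(L_1)} to obtain \textbf{whp} $e(L_1) < 3\epsilon n$, and then combining the identity $\pi(S) = \frac{2e_{G_p}(S)+|\partial_{G_p}(S)|}{2 e(L_1)}$ with the fact that $G_p[S]$ connected implies $e_{G_p}(S) \ge |S|-1$. This forces $|S| - 1 \le \pi(S)\, e(L_1) \le 3\epsilon n \cdot 2^{-j}$, hence $n/|S| \ge 2^j/(4\epsilon)$ and, since $\epsilon$ is sufficiently small, $\ln(n/|S|) \ge (\ln 2)\, j$.

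Combining the two estimates gives $\Phi(S) \ge \frac{(c'\ln 2)\, j}{d\ln d}$ uniformly over the relevant $S$, which yields the claim with $c = c'\ln 2$. I do not expect a genuine obstacle in this argument: it is essentially a bookkeeping step on top of Lemmas~\ref{l: bounding Phi(S)} and~\ref{l: e(L_1)}, with the crucial point being that the dyadic scale on which $\pi(S)$ is measured translates linearly into the $\ln(n/|S|)$ factor appearing in the conductance lower bound.
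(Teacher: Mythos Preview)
Your proposal is correct and follows essentially the same approach as the paper: both reduce to Lemma~\ref{l: bounding Phi(S)} and then convert the constraint $\pi(S)\le 2^{-j}$ into an upper bound on $|S|$ via $e(L_1)=O(\epsilon n)$, yielding $\ln(n/|S|)=\Omega(j)$. The only cosmetic difference is that the paper bounds $|S|$ by first invoking Lemma~\ref{l: pi(S) and |S|} and Lemma~\ref{l: edges incident to subsets}, whereas you use the more direct observation $e_{G_p}(S)\ge |S|-1$ together with Lemma~\ref{l: e(L_1)}; your route is slightly cleaner but the substance is identical.
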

\begin{proof}
Let $\mathcal{S} = \left\{S\subseteq V(L_1), 2^{-j-1}\le \pi(S)\le 2^{-j}, L_1[S] \text{ is connected}\right\}$.
Since $2^{-j} \geq \frac{200 \ln c \cdot d}{\epsilon^3 n}$, for all $S \in \mathcal{S}$, $\pi(S) \geq \frac{100 \ln c \cdot d}{\epsilon^3 n}$ and so by Lemma \ref{l: bounding Phi(S)}, \textbf{whp}
\[
\Phi(S) \geq \frac{c'\ln\left(\frac{n}{|S|}\right)}{d\ln d},
\]
for $c'\coloneqq c_{\ref{l: bounding Phi(S)}}$, where $c_{\ref{l: bounding Phi(S)}}$ is the constant whose existence is guaranteed by Lemma \ref{l: bounding Phi(S)}. It follows that \textbf{whp}
\begin{equation}\label{e:minimum}
\Phi\left(2^{-j}\right) = \min\left\{\Phi(S): S \in \mathcal{S} \right\} \geq \min\left\{\frac{c'\ln\left(\frac{n}{|S|}\right)}{d\ln d}: S \in \mathcal{S} \right\}.
\end{equation}

However, for all $S \in \mathcal{S}$, since $\pi(S) \geq \frac{100 \ln c \cdot d}{\epsilon^3 n}$ it follows from Lemma \ref{l: pi(S) and |S|} that $|S| \geq \frac{10 d \ln C}{\epsilon^2}$. Hence, by Lemma \ref{l: edges incident to subsets}, \textbf{whp} for all $S \in \mathcal{S}$, $\pi(S)=\frac{2e(S)+e(S,S^C)}{2e(L_1)}\ge \frac{|S|}{\epsilon n}$ and so
\begin{equation}\label{e:Ssize}
|S| \leq \epsilon n \pi(S) \leq 2^{-j}\epsilon n.
\end{equation}

Therefore, by \eqref{e:minimum} and \eqref{e:Ssize} \textbf{whp}
\[
\Phi\left(2^{-j}\right) \geq \frac{c'\ln\left(\frac{2^{j}}{\epsilon}\right)}{d\ln d} = \frac{cj}{d\ln d}.
\]
\end{proof}

We are now ready to prove the Theorem \ref{th: consequences}\ref{i: mixing time}.
\begin{proof}[Proof of Theorem \ref{th: consequences}\ref{i: mixing time}]
By Theorem \ref{th: mixing-time-tool}, we have that there exists an absolute constant $K>0$ such that
\begin{align}
     t_{mix}\le K\sum_{j=1}^{\log_2\pi_{\min}^{-1}}\Phi^{-2}\left(2^{-j}\right). \label{eq: A}
\end{align}
Let $j_{\mathrm{max}}$ be the largest integer such that $2^{-j_{\mathrm{max}}}\ge \frac{200d \ln C}{\epsilon^3 n}$, noting that $j_{\mathrm{max}}\le \log_2(\epsilon^3n)\le d$. Then by Lemma \ref{l: bounding Phi(p)}, \textbf{whp} for $1\le j \le j_{\mathrm{max}}$, we have that $\Phi^{-2}\left(2^{-j}\right)\le \frac{d^2\ln^2d}{c^2j^2}$. Thus,
\begin{align}
    \sum_{j=1}^{\log_2\pi_{\min}^{-1}}\Phi^{-2}\left(2^{-j}\right)=\sum_{j=1}^{j_{\mathrm{max}}}\Phi^{-2}\left(2^{-j}\right)+ \sum_{j=j_{\mathrm{max}}}^{\log_2\pi_{\min}^{-1}}\Phi^{-2}\left(2^{-j}\right)\le \sum_{j=1}^{d}\frac{d^2\ln^2d}{c^2j^2}+\sum_{j=j_{\mathrm{max}}}^{\log_2\pi_{\min}^{-1}}\Phi^{-2}\left(2^{-j}\right). \label{eq: B}
\end{align}
We note that 
\begin{align}
    \sum_{j=1}^{d}\frac{d^2\ln^2d}{c^2j^2}=O(d^2\ln^2d), \label{eq: C}
\end{align}
since $\sum_{j=1}^{d}\frac{1}{j^2}=O(1)$ for $d\to \infty$. Let us now estimate $\sum_{j=j_{\mathrm{max}}}^{\log_2\pi_{\min}^{-1}}\Phi^{-2}\left(2^{-j}\right)$. Since $L_1$ is connected, and by Lemma \ref{l: e(L_1)} \textbf{whp} $e(L_1) < 3\epsilon n$, \textbf{whp} for every $S\subseteq V(L_1)$ we have that \[
\Phi(S)=\Phi(S^c)\ge \frac{1}{4e(L_1)\pi(S)} \geq \frac{1}{12\epsilon n \pi(S)} .
\]
Hence, \textbf{whp} for any $S$ with $\pi(S) \leq 2^{-j}$, $\Phi(S) \ge \frac{2^j}{12\epsilon n}$, and so $\Phi\left(2^{-j}\right) \geq \frac{2^j}{12\epsilon n}$. Therefore, \textbf{whp}
\begin{align}
\sum_{j=j_{\mathrm{max}}}^{\log_2\pi_{\min}^{-1}}\Phi^{-2}\left(2^{-j}\right)\le  
    2 \left(\frac{12\epsilon n}{2^{j_{\mathrm{max}}}}\right)^2 \leq
    2\left(\frac{12 \epsilon n \cdot 200 d\ln C}{\epsilon^3 n}\right)^2=O(d^2). \label{eq: D}
\end{align}
Altogether, by \eqref{eq: A}, \eqref{eq: B}, \eqref{eq: C} and \eqref{eq: D} we obtain
\begin{align*}
    t_{mix}\le K\left(\sum_{j=1}^{d}\frac{d^2\ln^2d}{c^2j^2}+\sum_{j=j_{\mathrm{max}}}^{\log_2\pi_{\min}^{-1}}\Phi^{-2}\left(2^{-j}\right)\right)=O(d^2\ln^2d)+O(d^2)=O(d^2\ln^2d).
\end{align*}
\end{proof}
\section{Discussion and open questions}\label{discussion}
In this paper, we give edge-isoperimetric bounds for high-dimensional product graphs, from
which we are able to derive almost-tight bounds on the likely expansion properties of the giant component in supercritical percolation on these graphs, as well as almost-tight several structural consequences of these expansion properties. However, there are many interesting open questions that remain, both in terms of the isoperimetric properties of these graphs, as well as in terms
of the typical structure of the giant component, and we mention a few of these below.

\subsection{Isoperimetry in product graphs}
As mentioned in the introduction, Theorems \ref{th: iso 1} and \ref{th: iso 2} generalise the edge-isoperimetric inequality of the hypercube, and are tight in this case for sets of size $2^k$. In fact, more generally, Theorem \ref{th: iso 1} is tight in general for small sets up to a $(1+o(1))$ multiplicative factor, and the consequence of Theorem \ref{th: iso 2} that $i_k(G)=\Omega\left(\ln \left(\frac{n}{k}\right)\right)$ recovers up to a constant multiplicative factor known tight isoperimetric inequalities for many of the families of product graphs for which the edge-isoperimetric problem has been studied (see \cite{B99}).

Moreover, it is not too hard to see that, under the assumption that the base graphs are all isomorphic, $i_k(G)=\Theta\left(\ln \left(\frac{n}{k}\right)\right)$ for all $k$. Indeed, it is easy to verify that for $k=C^i$, $i$-dimensional \emph{projections} of $G$ -- that is, induced subgraphs on a vertex set of the form $V_1 \times V_2 \times \cdots  \times V_t$ where each $V_j$ is either $V(G^{(j)})$ or a singleton vertex $\{v_j\} \subseteq V(G^{(j)})$ -- will have order $k$ and edge-boundary of order $O\left(k\ln\left(\frac{n}{k}\right)\right)$. With a slightly more careful inductive argument, it can be shown that such a bound holds for intermediary $k$ as well. It is thus natural to ask about the leading constant.

\begin{question}\label{q:asympopt}
Let $H$ be a connected regular graph, and for all $j\in [t]$, let $G^{(j)}=H$. Let $G = \square_{j=1}^tG^{(j)}$ and let $n\coloneqq|V(G)|$. Are there constants $c\coloneqq c(H)$ and $K \coloneqq K(H)$ such that for all $1\le k \le \frac{n}{2}$,
\[
i_k(G)= (1+o(1))c\log_2 \left( \frac{n}{k}\right) \pm K?
\]
\end{question}
A natural conjecture, given the edge-boundary of $i$-dimensional projections of $G$, would be that we can take $c = d(H)$, which would agree with the known bounds in the case of the hypercube.

More generally, and very ambitiously, since we are interested in the asymptotics as $t \to \infty$, and for any fixed $C$ there is only a finite set $\{H_1,\ldots, H_m\}$ of graphs on at most $C$ vertices, we could ask the analogue of Question \ref{q:asympopt} in the limit as the proportion of the number of base graphs $G^{(i)}$ that are equal to a particular graph $H_i$ converges to a limit $\alpha_i$ for each $i$, although it seems likely that this is a difficult optimisation problem.

In the case of the hypercube the edge-isoperimetric problem has in fact been fully solved --- for each $k \leq 2^d$ it is known precisely which $k$-sets $S$ minimise its edge-boundary $\partial(S)$, and it is even known that one can choose a nested sequence of optimal sets, which then interpolate between subcubes of dimension $k$ for each $k \leq d$. This is known to hold more generally for many other product graphs, see \cite{B99}, although there are examples, such as the $d$-dimensional torus for cycles of length larger than five \cite{C02}, where there is no nested sequence of optimisers. 

For more general high-dimensional product graphs, again restricting ourselves first to the case of identical base graphs for simplicity's sake, it is natural to ask if optimal sets are given again by appropriately chosen projections of $G$. 

\begin{question}\label{q:projmin}
Let $H$ be a connected regular graph and for all $j\in [t]$, let $G^{(j)}=H$. Let $G = \square_{j=1}^tG^{(j)}$. Given $k\leq t$, under what conditions on $H$ is there a choice of vertices $v_{k,1},\ldots,v_{k,k}$ such that the minimal edge-boundary of a subset of size $C^{t-k}$ in $G$ is achieved by a set of the form
\[
S_k = \{v_{k,1}\} \times \{v_{k,2}\} \times  \cdots \times \{v_{k,k}\} \times V(H) \times V(H) \times \cdots \times V(H)?  
\]
Furthermore, under what conditions on $H$ can the vertices $\{v_{k,j}\colon j\leq k\}$ be chosen such that $v_{k,j} = v_{k',j}$ for all $k,k'\geq j$, so that the $S_k$ form a nested family?
\end{question}

Finally, the vertex-isoperimetric problem has also been fully solved in the hypercube, see \cite{H66}, where optimal sets are given by \emph{Hamming balls}. It is less easy to give an explicit lower bound for the vertex-boundary of a set of size $k$ as in Theorem \ref{th: Harper}, but roughly the vertex-expansion factor is a decreasing function of $k$, which is $\Omega(d)$ for small sets and shrinks to $\Omega\left( \frac{1}{\sqrt{d}}\right)$ for linear-sized sets. It would be interesting to determine if the solution to the vertex-isoperimetric problem in high-dimensional regular product graphs has similar asymptotic behaviour.

\begin{question}\label{q:projmin2}
Let $C>1$ be an integer. For all $j \in [t]$, let $G^{(j)}$ be a $d_j$-regular connected graph with $1<|V(G^{(j)})|\le C$. Let $G=\square_{j=1}^tG^{(j)}$, let $n\coloneqq|V(G)|$ and let $d \coloneqq \sum_{j=1}^t d_j$. 
\begin{itemize}
    \item Is it true that for all sets $S \subseteq V(G)$ of size $|S| \leq \frac{n}{2}$, $|N_G(S)| = \Omega\left( \frac{|S|}{\sqrt{d}}\right)$?
    \item How does the function $\displaystyle \hat{i}_k(G) := \min_{S \subseteq V(G), |S|=k} \left\{ \frac{|N_G(S)|}{|S|} \right\}$ behave for general $k$?
\end{itemize}
\end{question}

\subsection{Percolation in high-dimensional product graphs}
Moving on to the topic of percolation, as mentioned in the introduction, it has been shown \cite{DEKK22} that for a large class of high-dimensional product graphs the phase transition that they undergo around the percolation threshold is quantitatively similar to that which occurs in the binomial random graph $G(n,p)$, a phenomenon that has been observed in many other random subgraph models and which can be viewed as a sort of \emph{universality} property of $G(n,p)$. Using the standard notation of $\Tilde{\Theta}$ to denote the $\Theta$ Landau notation while suppressing logarithmic factors, in this paper we show that as in the giant component of $G(n,p)$, in percolation on a high-dimensional product graph with degree $d$ and order $n$ the typical mixing time of a lazy random walk on $L_1$ is $\tilde{\Theta}(d^2) = \tilde{\Theta}((\log n)^2)$, and the likely diameter of $L_1$ is $\tilde{\Theta}(d)= \tilde{\Theta}(\log n)$. From this point of view it is natural to ask what other parameters of these models, when appropriately scaled, resemble those in $G(n,p)$. In particular, a well-known result of Ajtai, Koml\'os, and Szemer\'edi \cite{AKS81a} states that \textbf{whp} a supercritical binomial random graph $G(n,p)$ contains a path and cycle of length $\Omega(n)$. Indeed, in a recent work, it was shown \cite{CDGKO21} that $Q^d_{\frac{1}{2}+\epsilon}$ contains \textbf{whp} a Hamiltonian cycle. Finding a cycle spanning a linear fraction of the vertices in the case of a supercritical subgraph of the hypercube remains open. Note that \cite{CDGKO21} poses several questions about a typical maximum length of a cycle in $Q^d_p$ for various regimes of $p\coloneqq p(d)$.  
\begin{question}
Let $G = \square_{j=1}^tG^{(j)}$ be a product graph all of whose base graphs are connected, regular and of bounded order. Let $d\coloneqq d(G)$, $n\coloneqq|V(G)|$, $\epsilon >0$ and let $p= \frac{1+\epsilon}{d}$. Does $G_p$ \textbf{whp} contain a cycle or a path of length $\Theta(n)$?
\end{question}
\begin{remark} We note that finding a path of length $\Theta(n)$ in $Q^d_p$ implies the likely existence of a cycle of the same order of magnitude in $Q^d_p$. Indeed, one can start by taking a path $P_0$ of length $\Theta(n)$ in the giant component of $\left(Q_0\right)_p$, where $Q_0$ is the subcube of $Q^d$ obtained by fixing the first coordinate to be $0$. Considering the projection of the first and last $\frac{|P_0|}{10}$ vertices of this path into the subcube of $Q^d$ obtained by fixing the first coordinate to be $1$, $Q_1$, one can utilise similar methods to Lemmas \ref{l: density} and \ref{l: disjoint paths} to show that at least one of the first $\frac{|P_0|}{10}$ vertices and one of the last $\frac{|P_0|}{10}$ vertices of this path will belong \textbf{whp} to the giant component in $(Q_1)_p$, and thus will have a path connecting them, closing a cycle of length $\Theta(n)$ with most of the vertices of $P_0$. This argument generalises easily to a product graph all of whose base graphs are connected, regular and of bounded order.
\end{remark}

Theorem \ref{th: consequences}\ref{i: cycle} shows that $L_1$ contains \textbf{whp} a cycle of length $\Omega(nd^{-1}\log^{-1}d)$, and by the comment after Theorem \ref{th: expander}, up to the logarithmic factor in $d$, this result is the best possible that one can derive directly from the expansion properties of $L_1$. It seems likely that to settle this question, even in the case of the hypercube, new methods will be required.

Finally, it would be interesting to determine whether the logarithmic factors in $d$ that appear in our bounds for the asymptotic mixing time and the likely diameter are necessary, or whether they can be removed, thus mirroring the picture in the supercritical $G(n,p)$, or improved. It is worth noting that unlike the application of the methods of \cite{FR08} in $G(n,p)$, in randomly perturbed graphs \cite{KRS15}, and in pseudo-random graphs \cite{DK21B}, the bottleneck on our bound on the mixing time here comes from our bound on the typical expansion of \textit{large} connected subsets, rather than small subsets.

\paragraph{Acknowledgements} The authors would like to thank the anonymous referee for their careful reading and insightful comments. The fourth author was supported in part by USA–Israel BSF grant 2018267. The second and third authors were supported in part by the Austrian Science Fund (FWF) [10.55776/\{P36131, W1230, I6502\}]. For the purpose of open access, the authors have applied a CC BY public copyright licence to any Author Accepted Manuscript version arising from this submission.

\bibliographystyle{abbrv} 
{\footnotesize
\bibliography{perc}} 
\end{document}